\definecolor{gray}{gray}{.75}
\definecolor{gray2}{gray}{.50}
\newcommand{\N}{\mathbb{N}}
\newcommand{\R}{\mathbb{R}}
\newcommand{\C}{\mathbb{C}}
\newcommand{\A}{\mathrm{\alpha}}
\newcommand{\w}{\mathrm{\omega}}
\newcommand{\e}{\epsilon}
\newcommand{\ff}{\mathrm{ff}}
\newcommand{\lf}{\mathrm{lf}}
\newcommand{\rf}{\mathrm{rf}}
\newcommand{\of}{\mathrm{of}}
\newcommand{\he}{\widehat{\eta}}
\newcommand{\phg}{\mathrm{phg}}
\newcommand{\Tr}{\mathrm{Tr}}
\newcommand{\wx}{\widetilde{x}}
\newcommand{\wy}{\widetilde{y}}
\newcommand{\wz}{\widetilde{z}}
\newcommand{\wt}{\widetilde{t}}
\newcommand{\ld}{\underline{\delta}}
\newcommand{\ud}{\overline{\delta}}
\newcommand{\del}{\partial}
\newcommand{\calU}{\mathcal U}
\newcommand{\calG}{\mathcal G}
\newcommand{\calH}{\mathcal H}
\newcommand{\calA}{\mathcal A}
\newcommand{\calE}{\mathcal E}
\newcommand{\calF}{\mathcal F}
\newcommand{\calC}{\mathcal C}
\newcommand{\calO}{\mathcal O}
\newcommand{\calV}{\mathcal V}
\newcommand{\calR}{\mathcal R}
\newcommand{\calD}{\mathcal D}
\newcommand{\RR}{\mathbb R}
\newcommand{\CC}{\mathbb C}
\newcommand{\NN}{\mathbb N}
\newcommand{\Ve}{\mathcal V_{\mathrm{e}}}
\newcommand{\Diffe}{\mathrm{Diff}_e}
\newcommand{\scrA}{\mathscr{A}}
\newcommand{\fdiag}{\mathrm{fdiag}}
\newcounter{mnotecount}[section]
\newcommand{\dbar}{{\mathchar'26\mkern-11mu\mathrm{d}}}
\newtheorem{thm}{Theorem}[section]
\newtheorem{cor}[thm]{Corollary}
\newtheorem{remark}[thm]{Remark}
\newtheorem{prop}[thm]{Proposition}
\newtheorem{lemma}[thm]{Lemma}
\newtheorem{defn}[thm]{Definition}
\newtheorem{assumption}[thm]{Assumption}
\numberwithin{equation}{section}
\begin{document}

\title[Elliptic theory of differential edge operators, II]
{Elliptic theory of differential edge operators, II: boundary value problems}

\author{Rafe Mazzeo}
\address{Stanford University \\ Department of Mathematics \\
450 Serra Mall \\ Stanford, CA 94305-2125\\ USA}
\email{mazzeo@math.stanford.de}

\author{Boris Vertman}
\address{Bonn University \\ Department of Mathematics \\
Endenicher Allee 60 \\ Bonn, 53115\\ Germany}
\email{vertman@math.uni-bonn.de}

\thanks{The first author was supported in part by the NSF grants DMS-0805529 and 1105050}
\thanks{The second author was supported by the Hausdorff Research Institute and DFG}

\subjclass[2000]{58J52}
\date{\today}

\begin{abstract}
This is a continuation of the first author's development \cite{M} of the theory of elliptic differential operators
with edge degeneracies. That first paper treated basic mapping theory, focusing on semi-Fredholm properties
on weighted Sobolev and H\"older spaces and regularity in the form of asymptotic expansions of solutions.
The present paper builds on this through the formulation of boundary conditions and the construction of 
parametrices for the associated boundary problems. As in \cite{M}, the emphasis is on the geometric 
microlocal structure of the Schwartz kernels of parametrices and generalized inverses. 
\end{abstract}

\maketitle
\pagestyle{myheadings}
\markboth{\textsc{Elliptic theory of differential edge operators, II}}{\textsc{Rafe Mazzeo and Boris Vertman}} 

\section{Introduction}
Degenerate elliptic operators on manifolds with boundary or corners arise naturally in many different problems in
partial differential equations, geometric analysis, mathematical physics and elsewhere. Over the past 
several decades, many types of such equations have been studied, often by ad hoc methods but sometimes 
through the development of a more systematic theory to handle various classes of operators. One particularly
fruitful direction concerns the elliptic operators associated to (complete or incomplete) iterated edge metrics 
on smoothly stratified spaces. The simplest examples of such operators include the Laplace
operators for spaces with isolated conic singularities or with asymptotically cylindrical ends. Other 
important special cases include nondegenerate elliptic operators on manifolds with boundaries
or the Laplacians on asymptotically hyperbolic (conformally compact) manifolds. The class of elliptic 
operators on spaces with simple edge singularities includes both of these sets of examples. A final
example is the Laplacian (or any other elliptic operator) on a smooth manifold written in Fermi coordinates around 
a smooth embedded submanifold. 

To be more specific, let $M$ be a compact manifold with boundary, and suppose that $\del M$ is the total
space of a fibration with base $B$ and fibre $F$. Choose coordinates $(x,y,z)$ near the boundary so
that $x = 0$ defines $\del M$, $y$ is a set of coordinates on $B$ lifted to $\del M$ and then extended
into $M$ and $z$ are independent functions which restrict to a coordinate system on each fibre $F_y$. 
A differential operator of order $L$ is called an edge operator of order $m$ if it has the form
\[
L = \sum_{j + |\alpha| + |\beta| \leq m} a_{j \alpha \beta}(x,y,z) (x\del_x)^j (x\del_y)^\alpha \del_z^\beta.
\]
We assume that the coefficients $a_{j \alpha \beta}$ are all $\calC^\infty$ on the closed manifold $M$; these
can either be scalar or (if $L$ acts between sections of vector bundles) matrix-valued.  We say that $L$
is edge elliptic if it is an elliptic combination of the constituent vector fields $x\del_x$, $x\del_{y_i}$
and $\del_{z_j}$; an invariant definition is indicated in \S 2.  The examples above 
all fall into this class, or else are of the form $x^{-m}L$ where $L$ has this form; operators of this latter
sort are called `incomplete' edge operators since they include Laplacians of metrics with incomplete
edge singularities. 

The present paper is the continuation of a now rather old paper by the first author \cite{M} which develops a
framework for the analysis of elliptic differential edge operators based on the methods of geometric 
microlocal analysis.  That paper establishes many fundamental results concerning the mapping properties of
these operators and the regularity properties of solutions, and those results have had very many applications,
both in analysis and geometry, in the intervening years.  We review this theory in \S 2. The mapping properties 
considered there were for an elliptic edge operator acting between weighted Sobolev and H\"older spaces. This 
left open, however, any development of a more general theory of ``elliptic edge boundary value problems''. 
The present paper finally addresses this aspect of elliptic edge theory.

An important generalization involves the study of elliptic operators with iterated edge singularities; 
examples include Laplacians on $\calC^\infty$ polyhedra or conifolds, see \cite{MM}, as well as on general 
smoothly stratified spaces \cite{ALMP}, \cite{ALMP2}.   As of yet, there is no complete elliptic theory 
in this general setting, although many special cases and specific results have been obtained by a variety of authors.
Certainly the closest to what we do here is the work of Gil, Krainer and Mendoza \cite{GKM} and the recent and ongoing 
work of Krainer-Mendoza \cite{KM}, \cite{KM2}. In various respects, these last papers go much farther than what 
we do here. We mention also the theory developed by Schulze, 
\cite{S1}, \cite{S2}, \cite{S3}. The emphasis in those papers and monographs is the development of a 
hierarchy of algebras of pseudodifferential operators, structured as in \cite{BdM}, with emphasis on the 
operator-symbol quantization associated to spaces with both simple and iterated edge singularities. Other notable 
contributions include the work of Maz'ya and his collaborators, see \cite{KMR}, as well as Nistor \cite{Ni}, 
Ammann-Lauter-Nistor \cite{ALN} and Br\"uning-Seeley \cite{BS}.   As noted above, Krainer and Mendoza \cite{KM}, \cite{KM2} 
also treat edge boundary problems, and in fact do so for more general operators with variable indicial roots, but their methods 
are somewhat different from the ones here. These last authors have very generously shared some important ideas 
from their work before the appearance of \cite{KM}, described here in \S 3, which form a necessary part of our analysis. 

There are many reasons for developing a theory of more general boundary conditions for elliptic edge operators, in 
particular from the point of view developed here. Perhaps most significant is the importance of mixed or global 
boundary conditions, either of local (Robin) or Atiyah-Patodi-Singer type, in the study of index theory for generalized 
Dirac operators on spaces with simple edge singularities, all of which appear in many natural problems. Similarly, the 
study of the eta invariant and analytic torsion for Dirac-type operators on spaces with various boundary conditions 
on spaces with isolated conic singularities has proved to be quite interesting. All of these directions fall within 
the scope of one or more of the other approaches cited above.  

The geometric microlocal methods used here have a distinct advantage over other (e.g.\ more directly Fourier analytic) 
approaches: our primary focus is on Schwartz kernels rather than abstract mapping properties or methods too
closely tied to more standard pseudodifferential theory, and because of this it is equally easy to obtain results 
adapted to any standard types of function spaces that one might wish to use, e.g.\ weighted H\"older or $L^p$ spaces. 
This transition between mapping properties on different types of spaces seems more difficult using those other 
approaches, although having such properties available is quite important when studying nonlinear geometric 
problems on spaces with edge singularities, cf.\ \cite{JMR} for a recent example. 

One limitation of the current development, however, is that we do not treat the delicate regularity issues associated 
with the possibility of smoothly varying indicial roots.  As in \cite{M}, we make a standing assumption that all 
operators considered here have constant indicial roots, at least in the critical weight-range $(\ld, \ud)$. 
We refer to \S 2 for a description of all of this.

Because their precise description requires a number of preliminary definitions, we defer to \S 3 a careful description 
of our main results; however, we now state them briefly and somewhat
informally.  The starting point is the basic statement that if $L$ is an elliptic edge operator, as above, then under appropriate
hypotheses on the indicial roots and assuming the unique continuation property for the reduced Bessel operator
$B(L)$, see \S 2 for these, the mapping
\[
L: x^\delta H^m_e(M) \longrightarrow x^\delta L^2(M)
\]
is essentially surjective, i.e.\ has closed range with finite dimensional cokernel, provided 
$\delta \leq \ld$ and $\delta$ is nonindicial, and is essentially injective, i.e.\ has
closed range with finite dimensional nullspace, if $\delta \geq \ud$. 
Suppose that $u \in x^{\ld} H^m_e$ and $Lu = f \in x^{\ud}$. Then it is proved in \cite{M} that
\[
u \sim \sum_{j=1}^N \sum_{\ell, p \in \mathbb N_0} u_{j p \ell}(y,z) x^{\gamma_j + \ell} (\log x)^p + \tilde{u}, 
\]
where the sum is over all indicial roots (see \S 2) of $L$ and indices $p, \ell$ such that $\gamma_j + \ell \in (\ld, \ud)$
and $p$ is no greater than some integer $N _{j,\ell}$, and where $\tilde{u} \in x^{\ud}H^m_e$.  The 
subcollection of leading coefficients $\{u_{j, N_{j 0}, 0}\}$ is called the Cauchy data of $u$ and denoted 
$\calC(u)$.  A boundary condition for this edge problem consists of a finite collection $Q = \{Q_{kj}\}$ 
of pseudodifferential operators acting on these leading coefficients. (Since the precise formulation is somewhat 
intricate, we defer this for now.) We then study the mappings:
\[
\begin{aligned}
& L u = f \in x^{\ld}L^2  \\
& Q( \calC(u)) = \phi.
\end{aligned}
\]
The main results here give conditions for when this mapping is Fredholm or semi-Fredholm acting on appropriate
weighted Sobolev spaces.  We follow the methods due originally to Calderon, described particularly well in 
the monograph of Chazarain and Piriou \cite{CP}, and later extended significantly by Boutet de Monvel and others; however, we
do not define the full Boutet de Monvel calculus in this edge setting. We also give the precise structure of the 
Schwartz kernel of the generalized inverse of this mapping, and consequently can study this problem on other 
function spaces. We do not treat any application of these results in this paper, but must rely on the reader's 
knowledge of the centrality of elliptic boundary problems in the standard setting, and on his or her faith 
that this extension of that theory will also have broad applicability.

\bigskip

{\bf Acknowledgements.} The first author's understanding of elliptic boundary problems reflects his long and fruitful
interactions with Richard Melrose, and also Pierre Albin, Charlie Epstein, Gerd Grubb, Thomas Krainer, 
Gerardo Mendoza and Andras Vasy. The second author would like to express his gratitude to Bert-Wolfgang Schulze for 
encouragement and many useful discussions concerning his alternate formulation of edge calculus. He also 
wishes to acknowledge many useful discussions with Andras Vasy. The second author also thanks the 
Department of Mathematics at Stanford University for its hospitality during a major part of the research and 
writing which led to this paper. Both authors offer special thanks to Thomas Krainer and Gerardo Mendoza for many
useful discussions on this subject, and in particular for explaining their theory of trace bundles to us before the appearance of \cite{KM}. 
We hope the reader will regard their work, as we do, as a good counterpart to ours, with somewhat different aims.


\section{A review of the edge calculus} \label{fundamentals}
We begin by recalling in more detail the geometric and analytic framework necessary to discuss the theory of 
differential and pseudodifferential edge operators, and then review the main theorems from \cite{M} concerning
the semi-Fredholm theory and asymptotics of solutions.  This section is meant as a brief review, and is not meant
to be self-contained. We refer the reader to \cite{M} for elaboration and proofs of all the definitions and facts 
presented here. 

\medskip

\noindent{\bf Edge structures}
As in the introduction, let $M$ be compact manifold with boundary, and suppose that $\del M$ is the total space of a fibration 
$\phi:\partial M \to B$ with fibre $F$.  We set $b = \dim B$ and $f = \dim F$. 

The fundamental object in this theory is the space $\Ve$ of all smooth vector fields on $M$ which are unconstrained in 
the interior and which are tangent to the fibres of  $\phi$ at $\del M$; clearly $\Ve$ is
closed under Lie bracket.  We shall routinely use local coordinate systems near the boundary of the following form: 
$x$ is a defining function for the boundary (i.e.\ $\del M = \{x = 0\}$), $y_1, \ldots, y_b$ is a set of local coordinates
on $B$ lifted to $\del M$ and then extended into $M$, and $z_1, \ldots, z_f$ is a set of independent functions
which restricts to a coordinate system on each fibre $F_y$. In terms of these, 
\begin{equation}
\Ve = \mathrm{Span}_{\calC^\infty} \, \{ x\del_x, x\del_{y_1}, \ldots, x \del_{y_b}, \del_{z_1}, \ldots, \del_{z_f} \}.
\label{v-e}
\end{equation}
In other words, any $V \in \Ve$ can be expressed locally as 
\[
V = a x\del_x + \sum b_i x \del_{y_i} + \sum c_j \del_{z_j}, \quad \mbox{where}\ a, b_i, c_j \in \calC^\infty(\overline{M}). 
\]

Any differential operator can be expressed locally as the sum of products of vector fields, and so we can define
interesting subclasses of operators by restricting the vector fields allowed in these decompositions. In particular,
define $\Diffe^*(M)$ to consist of all differential operators which are locally finite sums of products of
elements in $\Ve$. With the subscript corresponding to the usual order filtration, we have, in local coordinates,
\begin{equation}
\Diffe^m(M) \ni L = \sum_{j+|\alpha|+|\beta| \leq m}  a_{j \alpha \beta}(x,y,z) (x\del_x)^j (x\del_y)^\alpha \del_z^\beta,
\label{diffem}
\end{equation}
with all $a_{j \alpha \beta} \in \calC^\infty$. 
Here and later we use standard multi-index notation to describe (differential) monomials. If $L$ acts between sections
of two bundles $E$ and $F$, then taking local trivializations of these bundles, the coefficients here are matrix-valued. 

There is a natural edge tangent bundle ${}^eTM$ defined by the property that $\Ve$ coincides with its {\it full} space 
of $\calC^\infty$ sections; its dual is the edge cotangent bundle ${}^eT^*M$, which has a local $\calC^\infty$ basis of 
sections consisting of the $1$-forms
\[
\frac{dx}{x}, \frac{dy_1}{x}, \ldots, \frac{dy_b}{x}, dz_1, \ldots, dz_f.
\]

Any $L \in \Diffe^m(M)$ has symbol
\[
{}^e\sigma_m(L)(x,y,z, \xi, \eta, \zeta) = \sum_{j+|\alpha|+|\beta| = m} a_{j \alpha \beta} (x,y,z) \xi^j \eta^\eta \zeta^\beta,
\]
which is well-defined as a smooth function on ${}^eT^*M$ which is a homogenous polynomial of degree $m$ on
each fibre.  If $L$ acts between sections of two vector bundles $E$ and $F$, then ${}^e\sigma_m(L)$ takes values
in $\mathrm{End}(\pi^*E, \pi^* F)$, where $\pi: {}^e T^*M \to M$.   The operator $L$ is said to be elliptic (in the
edge sense) if this symbol is invertible when $(\xi,\eta,\zeta) \neq (0,0,0)$. 

A (complete) edge metric is a smooth positive definite section of $\mathrm{Sym}^2 ({}^e T^*M)$. It is not hard to
check that if $g$ is any metric of this type, then its scalar Laplacian, Hodge Laplacian, and all other natural
elliptic geometric operators (e.g. the rough Laplacian, the Lichnerowicz Laplacian, twisted Dirac operators, etc.)
are all elliptic edge operators (N.B.; some of these operators are of this type only if expressed in terms of an 
appropriate basis of sections of the bundles on which they act). 
Similarly, an incomplete edge metric $g$ is one of the form $x^2 \tilde{g}$, where
$\tilde{g}$ is a complete edge metric.  Its Laplacian is of the form $x^{-2}L$ where $L \in \Diffe^2(M)$, and there
are analogous assertions for the other elliptic operators mentioned above.  In practice one often restricts
to a smaller class of metrics (for example, requiring that $g$ does not contain the term $x^{-1}dxdy$, though
even even more rigid hypotheses arise naturally), see \cite{MV} for more on this.

\medskip
\noindent{\bf Model operators}
Let $L$ be an elliptic edge operator of order $m$, expressed as in \eqref{diffem}. The analysis of the mapping 
properties of $L$ relies on a variety of associated model operators. 

First, the principal edge symbol ${}^e\sigma_m(L)$ is a purely algebraic model for $L$ at any point; the microlocal
inversion of $L$, uniformly up to the boundary, relies on the invertibility of this object. 

Next, associated to every point $y_0 \in B$ (taken as the origin in the $y$ coordinate system) is the {\it normal operator} 
\begin{equation}
N_{y_0}(L) = \sum_{j+|\alpha| + |\beta| \leq m} a_{j \alpha \beta}(0, 0, z) (s\del_s)^j (s\del_u)^\alpha \del_z^\beta;
\label{normalop}
\end{equation}
this acts on functions on the model space $\RR^+_s \times \RR^b_u \times F_{y_0}$, where $s$ and $u$ are global
{\it linear} variables on a half-space which can be regarded as being the part of the tangent bundle which is the
inward normal to $F_{y_0}$. This model space is naturally identified with the tangent cone with respect
to the family of dilations $(x,y,z) \mapsto (\lambda x, \lambda y, z)$ as $\lambda \nearrow \infty$.

Another operator which models the behaviour of $L$ near $F_{y_0}$ is the Bessel operator 
\begin{equation}
B_{y_0, \hat{\eta}}(L) = \sum_{j + |\alpha| + |\beta| \leq m}  a_{j \alpha \beta} (0,0,z)  (t\del_t)^j (-i t \hat{\eta})^\alpha \del_z^\beta.
\label{Bessel}
\end{equation}
Here $\hat{\eta} = \eta/|\eta|$, where $\eta \in T_{y_0}^*B$ (i.e.\ $\hat{\eta}$ lies in the spherical conormal bundle
$S_{y_0}^*B$).  This is obtained from $N(L)$ by first passing to the Fourier transform in $u$ (which transforms $s \del_u$ 
to $-i s \eta$) and then rescaling by setting $t = s |\eta|$. These operations are reversible so the family $B_{y_0, \hat{\eta}}(L)$
is completely equivalent to $N_{y_0}(L)$ even though it appears to be simpler.  
Note that the structure of $B(L)$ as $t \to \infty$ captures the behaviour as $|\eta| \to \infty$, 
and hence corresponds to local behaviour for $N(L)$. 

Finally, the indicial operator, which is an elliptic $b$-operator in the sense of \cite{Mel-APS}, 
\cite{M}, on $\RR^+ \times F$, is defined by
\begin{equation}
I_{y_0}(L) = \sum_{j+|\beta| \leq m} a_{j 0 \beta} (0, 0, z) (t\del_t)^j \del_z^\beta.
\label{indicial}
\end{equation}
This is obtained from the Bessel-normal operator by dropping the terms which are lower order in the $b$-theory.
Thus $B_{y_0,\hat\eta}(L) = I_{y_0}(L) + E$, where $E$ is truly lower order on any finite interval $0 < t < t_0$. However, 
as remarked above, the large $t$ behaviour of $B_{y_0,\hat\eta}(L)$ contains important information missing in the indicial operator. 

\medskip

\noindent{\bf Indicial roots and the trace bundle} The indicial operator can be conjugated, via the Mellin transform, 
to the {\it operator pencil},
\begin{equation}
I_{y_0}(L)(\zeta) := \sum_{j+|\beta| \leq m} a_{j 0 \beta} (0, 0, z)(-i \zeta)^j \del_z^\beta, 
\label{indfamily}
\end{equation}
which depends smoothly on $y_0 \in B$; this is often called the indicial family of $L$.  (An operator pencil, an important 
generalization of a resolvent family,  is simply a polynomial family with operator coefficients.)   Because the 
coefficient of $\zeta^j$ has order $m-j$, and the coefficient of $\zeta^0$ is an elliptic operator on $F_{y_0}$ 
of order $m$. Hence the analytic Fredholm theorem may be applied, and this implies that this family is either never 
invertible, for any $\zeta 
\in \CC$, or else that this inverse is meromorphic, and the Laurent coefficients at each pole are operators of finite 
rank.  It is certainly necessary to assume that $I_{y_0}(L)(0) = \sum_{|\beta| = m} a_{0 0 \beta}(z) \del_z^\beta$ has 
index zero, otherwise we are necessarily in the first, nowhere invertible, case.  A standard condition to ensure 
that the inverse exists at one point, and hence away from a discrete set, is that the resolvent of the (ordinary) symbol, 
$(\sigma_m( I_{y_0}(L)(0)) - \lambda)^{-1}$, satisfy standard elliptic symbol-with-parameter estimates 
in some open conic sector.  In any case, we shall assume that we are in some setting which allows
us to conclude that the indicial family has meromorphic inverse.   
\begin{defn}
The boundary spectrum of $L$, $\mbox{Spec}_b(L)$, is the set of locations of the poles of the meromorphic family
$I_{y_0}(L)^{-1}$ (this is also called the spectrum of the operator pencil); elements of $\mbox{Spec}_b(L)$ are called 
the indicial roots of $L \ \textup{(at $y_0$)}$. 
\end{defn}
We have tacitly suppressed the fact that these indicial roots may vary with $y_0$.  The analysis of edge operators 
with indicial roots depending nontrivially on $y_0$ is an interesting and difficult topic, and is discussed
in detail in the forthcoming work of Krainer and Mendoza. However, partly because this behaviour often does not
occur for the natural examples of edge operators, we choose to make the basic 
\begin{assumption}[Constancy of indicial roots]
\label{discrete}
The spectrum of the indicial family is discrete and the location of the poles of $I_{y_0}(L)(\zeta)^{-1}$ does 
not depend on $y_0 \in B$. 
\label{assump-const-ind-rts}
\end{assumption}

It is necessary to make one further hypothesis:
\begin{assumption}
For each $(y_0, \hat\eta)$, the Bessel operator $B_{y_0,\hat\eta}(L)$ is injective on
$t^{\delta}L^2(dt dz)$ for $\delta \gg 0$, and is surjective when $\delta \ll 0$. 
\label{assump-uniq-cont}
\end{assumption}
This holds in many interesting situations, see \cite{Ma-UC}. 

We henceforth always work with operators satisfying both of these assumptions. In the following, we 
fix two nonindicial values $\ld$ and $\ud$ such that $B(L)$ is injective on $t^{\ud}L^2$ and surjective
on $t^{\ud} L^2$.  We then write $\mathfrak{S}(L)$ for the set of all indicial roots in this critical strip,
omitting their multiplicity: 
\[
\mathfrak{S}(L):=\{\zeta_j \in \C: \ld -1/2 < \Im\zeta_0 < \ud - 1/2\ \mbox{and}\ \exists \, p \in \N\ 
\mbox{s.t.}\ (\zeta_j,p) \in \textup{Spec}_b(L)\}.
\]
The shift by $1/2$ appears because we are using the measure $dt dz$.

\begin{prop}
Let $\w \in t^{\ld} L^2( dt dz)$ and $I_{y_0}(L) \w = f \in t^{\ud}L^2(dt dz)$. Then
\[
\w =  \sum_{\zeta_j \in \mathfrak{S}(L)} \, \sum_{p = 0}^{p_j}  \w_{j, p}(z) t^{-i\zeta + \ell} (\log t)^p + \widetilde{\w}, 
\quad  \w_{j,p} \in \calC^\infty(F),\ \widetilde{\w} \in t^{\ud}L^2(dt dz). 
\]
Here $p_j + 1$ is equal to the order of the pole of $I_{y_0}(L)^{-1}$ at $\zeta_j$. 
\label{explsoln}
\end{prop}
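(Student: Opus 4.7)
The plan is to reduce the problem to one-dimensional complex analysis via the Mellin transform in $t$ and then apply a contour-shift argument. Writing $I := I_{y_0}(L)$ and letting $\widehat{g}$ denote the Mellin transform of $g$ in $t$, Plancherel identifies $t^\delta L^2(dt\,dz)$ isometrically with $L^2(\{\Im\zeta = \delta - 1/2\};\, L^2(F))$; the $-1/2$ shift arises from the distinction between $dt$ and the multiplicative measure $dt/t$, exactly as flagged in the paragraph introducing $\mathfrak{S}(L)$. Under this transform the equation $I\w = f$ becomes the pointwise algebraic identity $I(\zeta)\widehat{\w}(\zeta) = \widehat{f}(\zeta)$ on each horizontal line on which both sides are defined.

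By Assumption \ref{assump-const-ind-rts}, $I(\zeta)^{-1}$ is meromorphic in $\zeta$ with poles contained in the discrete set $\mathfrak{S}(L)$, and the non-indicial choice of $\ld$ and $\ud$ places the two bounding lines $\Im\zeta = \ld - 1/2$ and $\Im\zeta = \ud - 1/2$ off all such poles. Combining meromorphy with a standard Paley--Wiener extension argument in the $b$-model, the identity $\widehat{\w}(\zeta) = I(\zeta)^{-1}\widehat{f}(\zeta)$ continues $\widehat{\w}$ to a meromorphic $\calC^\infty(F)$-valued function on the closed strip with poles exactly at $\mathfrak{S}(L)$ and square-integrable values on each of the two bounding lines. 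Inverting the Mellin transform along $\Im\zeta = \ld - 1/2$ and deforming the contour to $\Im\zeta = \ud - 1/2$, the residue theorem contributes, at each pole $\zeta_j$ of order $p_j + 1$, a term of the form $\sum_{p = 0}^{p_j} \w_{j,p}(z)\, t^{-i\zeta_j}(\log t)^p$. The Laurent coefficients of $I(\zeta)^{-1}$ at $\zeta_j$ are finite-rank operators on $F$; elliptic regularity on the closed fibre $F$, applied to their ranges, forces $\w_{j,p} \in \calC^\infty(F)$. The remaining integral along the shifted contour is the stated remainder $\widetilde{\w}$, which lies in $t^{\ud}L^2(dt\,dz)$ by a second application of Plancherel.

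The main technical obstacle is justifying the contour deformation. This requires polynomial operator-norm bounds on $I(\zeta)^{-1}$ as $|\Re\zeta|\to\infty$ within the strip, off a tubular neighbourhood of $\mathfrak{S}(L)$. Such bounds are provided by the parameter-elliptic resolvent estimates flagged in the paragraph preceding Definition 2.1: the symbol-with-parameter hypothesis on $(\sigma_m(I(0)) - \lambda)^{-1}$ in a conic sector promotes, via a standard elliptic parametrix construction on the closed fibre $F$, to uniform mapping estimates for $I(\zeta)^{-1}$ on weighted Sobolev spaces over $F$, with $|\Re\zeta|$ playing the role of the large parameter. Combined with Cauchy's theorem and the $L^2$ integrability of $\widehat{f}$ on each horizontal line, these estimates render the contour shift and the subsequent identification of the residues as $(\log t)^p$-polynomial terms rigorous, completing the argument.
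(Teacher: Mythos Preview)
Your proposal is correct and follows essentially the same approach as the paper: pass to the Mellin transform, use the meromorphic inverse $I(\zeta)^{-1}$ to extend $\widehat{\w}$ across the strip, then shift the contour of the inverse Mellin integral from $\Im\zeta = \ld - 1/2$ to $\Im\zeta = \ud - 1/2$, collecting the residues at the indicial roots. Your treatment is in fact more thorough than the paper's sketch, which simply cites \cite{M} and does not spell out the parameter-elliptic bounds on $I(\zeta)^{-1}$ needed to justify the contour deformation; your discussion of that point is a welcome addition.
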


Dropping the subscript $y_0$, we pass to the Mellin transform of this equation (see \cite{M}),
which is $I(L)(\zeta) \w_M(\zeta, z) = f_M(\zeta, z)$. The Mellin transforms $\w_M(\zeta, z)$ and $f_M(\zeta,z)$ are 
holomorphic in $\Im \zeta < \ld - 1/2$ and $\Im \zeta < \ud - 1/2$, respectively, both with values in $L^2(F; dz)$.
Thus $\w_M = - I(L)(\zeta)^{-1} f_M$ extends meromorphically to this larger half-plane.  Taking the inverse Mellin
transform by integrating along the line $\Im \zeta = \delta < \ld - 1/2$ and then shifting the contour across the
poles in this horizontal strip produces the expansion. 

Using this result, we can define the trace of $\w$ to consist of the set of functions $\w_{j,p}$ over all $\zeta_j
\in \mathfrak{S}(L)$ and $p \leq p_j$. There is a subtlety here in that although we require that each $\zeta_j$
is independent of $y_0$, the same may not be true of the $p_j$, so we need to explain carefully the sense in which
this expansion depends smoothly on $y_0$.  

To describe this we need make a small detour. First note that the algebraic multiplicity of each pole is well-defined. 
As in the special case of the resolvent family of a non self-adjoint operator, this algebraic multiplicity is a 
positive integer which measures the dimension of the space of generalized eigenvectors associated to that 
indicial root.  Since there are several different-looking (but equivalent) definitions of this quantity,
we provide a slightly longer description than strictly necessary, for the reader's convenience. For simplicity of notation, 
omit the dependence on $y_0$ for the moment, and suppose that $\zeta_0$ is the indicial root in question.  
The geometric eigenspace of $I(L)(\zeta_0)$ is the subspace of $\calC^\infty(F)$ consisting of all $\phi_0$ such that 
$I(L)(\zeta_0)\phi_0 = 0$, and its dimension is called the geometric multiplicity of the indicial root.  
Suppose now that there exist additional functions $\phi_j \in \calC^\infty(F)$, $j = 1, \ldots, k-1$ such that 
\begin{equation}
\sum_{j=0}^{\ell} \frac{1}{j!} (\del_\zeta^j I(L))(\zeta_0) \phi_j  = 0, \ \ \ell = 1, \ldots, k-1;
\label{jordanchain}
\end{equation}
this sequence of equations is equivalent to the single condition
\[
I(L)(\zeta)( \phi_0 + (\zeta-\zeta_0) \phi_1 + \ldots (\zeta-\zeta_0)^{k-1} \phi_{k-1}) = \calO(|\zeta-\zeta_0|^k).
\]
The ordered $k$-tuple $(\phi_0, \ldots, \phi_{k-1})$ is called a generalized eigenvector, and the maximal length of all such 
chains beginning with $\phi_0$ is called the multiplicity of $\phi_0$ and denoted $m(\phi_0)$.  In other words, $m(\phi_0)$ 
measures the order to which $\phi_0$ can be extended as a formal series solution of $I(L)(\zeta) \sum 
(\zeta-\zeta_0)^j \phi_j \equiv 0$. We refer to $\phi_0(\zeta)= \sum (\zeta-\zeta_0)^j \phi_j$ as
the root function associated to the eigenvector $\phi_0$. Following \cite[\S 1.1]{KMR}, choose a basis 
$\{\phi_{0,1}, \ldots, \phi_{0,N}\}$ for the geometric eigenspace of $I(L)(\zeta_0)$ so that $m(\phi_{0,1}) \leq 
m(\phi_{0,2})\leq \cdots \leq m(\phi_{0,N})$, and then define the algebraic multiplicity of the pole to be the number 
\[
m(\zeta_0) = \sum_{j=1}^N m(\phi_{0,j}). 
\]

There is an alternate description, following \cite[Ch. XI]{GGK}, which gives a slightly different intuition for this number.
The first step is to write the holomorphic family $I(L)(\zeta)$ as the product $E(\zeta)D(\zeta) F(\zeta)$, locally 
near $\zeta_0$. Here $E(\zeta)$ and $F(\zeta)$ are holomorphic and invertible for $|\zeta-\zeta_0| < \epsilon$ and 
\[
D(\zeta) = P_0 + (\zeta- \zeta_0)^{\kappa_1} P_1 + \ldots + (\zeta- \zeta_0)^{\kappa_r} P_r,
\]
where the $P_j$ are mutually disjoint projectors (i.e.\ $P_i P_j =0$ if $i \neq j$), $P_0$ has infinite rank, 
$\mbox{rank} \, P_j = 1$, $j > 0$, and $P_0 + \ldots + P_r = \mbox{Id}$. Clearly $\kappa_r$ is the order of 
the pole of $I(L)(\zeta)^{-1}$ at $\zeta_0$, and a straightforward calculation shows that the algebraic multiplicity 
$m(\zeta_0)$ is equal to $\kappa_1 + \ldots + \kappa_r$.  

In any case, by an extension of the theorem of Keldy\u{s} \cite{Kel1},\cite{Kel2}, see Gohberg-Sigal \cite{GohSig} and 
Menniken-M\"oller \cite{MM}, the generalized eigenvectors characterize the singular part of the Laurent expansion
of $I_{y_0}(L)(\zeta)^{-1}$ at $\zeta_0$, as follows. These sources prove that there exists a set of polynomials in $\zeta$, 
$(\psi_1(\zeta), ..,  \psi_N(\zeta))$, taking values in $\calD'(F)$ (distributions on $F$), and an operator-valued family $H(\zeta)$ 
which  is holomorphic near $\zeta_0$, such that 
\begin{align}\label{representation}
I_{y_0}(L)(\zeta)^{-1} = \sum_{j=1}^N (\zeta - \zeta_0)^{-k_j} \phi_{0,j}(\zeta) \otimes \psi_j(\zeta) + H(\zeta).
\end{align}
Here, for each $j$, $\phi_{0,j}(\zeta)$ is the root function corresponding to the element $\phi_{0,j}$ in the geometric
eigenspace and $k_j = m(\phi_{0,j})$. This implies that for any holomorphic function $u(\zeta)$ taking 
values in $\calC^\infty(F)$, each singular Laurent coefficient of $I_{y_0}(L) (\zeta)^{-1} u(\zeta)$ at $\zeta_0$ 
is a linear combinations of the coefficients $\phi_{\ell, j}$ of the root functions $\phi_{0,j}(\zeta) = \sum \phi_{\ell,j} (\zeta -
\zeta_0)^\ell$.  

Going back to Proposition~\ref{explsoln}, and using the notation there, it is clear that each of the singular Laurent 
coefficients of $\w_M$ at any pole $\zeta_0$ in the horizontal strip is linear combination of coefficients of the root 
functions for $I(L)(\zeta_0)$, hence the coefficients of the terms $t^{-i\zeta_0} (\log t)^p$ in the partial expansion for $\w$
are constituted by these same functions.

We now finally come to the issue of smooth dependence on $y_0$. The key fact is that the algebraic multiplicity $m(\zeta_0)$ 
of each pole is invariant under small perturbations, and hence is locally independent of $y_0$. This follows from an 
operator-valued version of Rouch\'e's theorem; we refer to \cite[\S 1.1.2]{KMR} for a more careful description, 
and to \cite[Theorem 9.2]{GGK} for a proof.  

In fact, slightly more is true: the direct sum of the coefficients of the root functions $\{\phi_{0,j}(\zeta), j \leq N\}$
form a vector space $\calE_{ y_0}(\zeta_0)$ of dimension $m(\zeta_0)$, and as $y_0$ varies, these vector spaces fit 
together as a smooth vector bundle $\calE(\zeta_0) = \calE(L; \zeta_0)$ over each connected component of $B$.  

To define this bundle, write $\mathscr{M}(\zeta_0)$ and $\mathscr{H}(\zeta_0)$ for the spaces of germs of meromorphic 
and holomorphic functions, respectively, at $\zeta_0$.  Following the definitions above, if $u(\zeta)$ is a holomorphic 
$\calC^\infty(F)$-valued function defined near $\zeta_0$, then the Laurent coefficients of $I_{y_0}(L)(\zeta)^{-1} u(\zeta)$ 
at $\zeta_0$ lie in $\calE_{y_0}(\zeta_0)$. We take this as our primary definition and hence let 
\[
\calE_{y_0}(\zeta_0) = \{ [u] \in \mathscr{M}(\zeta_0) / \mathscr{H}(\zeta_0) : [I_{y_0}(L)(\zeta)^{-1} u]=0\};
\]
equivalently, $\calE_{ y_0}(\zeta_0)$ is identified with the kernel of $I_{y_0}(L)$ on the space of all finite combinations 
$\sum_q  a_{j, q}(z) t^{-i\zeta_0 }(\log t)^q$ with $a_{j,  q} \in \calC^\infty(F)$. 
\begin{prop}[Krainer and Mendoza \cite{KM}]
\[
\calE(L;\zeta_0) :=\coprod\limits_{y_0 \in B} \calE_{y_0}(L;\zeta_0) \overset{\pi}\longrightarrow B
\]
is a smooth vector bundle of rank $m(\zeta_0)$. 
\label{trace-bundle}
\end{prop}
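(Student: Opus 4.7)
The plan is to construct explicit smooth local trivializations of $\calE(L;\zeta_0) \to B$ using contour integrals of the indicial resolvent family. Fix $y_0 \in B$ and a small circle $\gamma \subset \CC$ centered at $\zeta_0$ whose closed disk contains no other element of $\textup{Spec}_b(I_{y_0}(L))$. Since each coefficient $a_{j0\beta}(0,y,z)$ is smooth in $y$, the operator pencil $y \mapsto I_y(L)(\zeta)$ is a smooth family; by Assumption~\ref{assump-const-ind-rts} the pole locations are $y$-independent, so $\zeta_0$ is the only possible pole inside $\gamma$ for every $y$, and by the operator-valued Rouch\'e theorem \cite[Theorem 9.2]{GGK}, the total algebraic multiplicity of $I_y(L)(\zeta)^{-1}$ inside $\gamma$ remains constantly equal to $m(\zeta_0)$ for $y$ in some neighborhood $U$ of $y_0$. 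Consequently $(y,\zeta) \mapsto I_y(L)(\zeta)^{-1}$ is smooth in $y \in U$ and $\zeta \in \gamma$.

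Next I would produce $n := m(\zeta_0)$ smooth sections of $\calE(L;\zeta_0)$ over $U$. By the representation \eqref{representation} at $y_0$, the principal part map $u(\zeta) \mapsto [I_{y_0}(L)(\zeta)^{-1} u(\zeta)] \in \mathscr{M}(\zeta_0)/\mathscr{H}(\zeta_0)$, applied to holomorphic $\calC^\infty(F)$-valued germs at $\zeta_0$, surjects onto $\calE_{y_0}(L;\zeta_0)$. Choose polynomials $u_1(\zeta),\ldots,u_n(\zeta)$ in $(\zeta-\zeta_0)$ with coefficients in $\calC^\infty(F)$ so that the principal parts $[I_{y_0}(L)(\zeta)^{-1} u_k]$ form a basis of $\calE_{y_0}(L;\zeta_0)$. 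For $y \in U$, define
\[
s_k(y) := \Bigl[\, I_y(L)(\zeta)^{-1} u_k(\zeta)\, \Bigr] \in \mathscr{M}(\zeta_0)/\mathscr{H}(\zeta_0);
\]
by construction each $s_k(y) \in \calE_y(L;\zeta_0)$. The individual Laurent coefficients of $s_k(y)$ are extracted by the contour integrals
\[
\frac{1}{2\pi i}\oint_\gamma (\zeta-\zeta_0)^{j}\, I_y(L)(\zeta)^{-1} u_k(\zeta)\, d\zeta \in \calC^\infty(F),
\]
which depend smoothly on $y \in U$ because the integrand is smooth on $U \times \gamma$ and $\gamma$ is compact.

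Finally I would verify that $s_1(y),\ldots,s_n(y)$ remain a basis of $\calE_y(L;\zeta_0)$ on a possibly smaller neighborhood. Since $m(\zeta_0)$ bounds the order of every pole of $I_y(L)(\zeta)^{-1}$ at $\zeta_0$, each $s_k(y)$ lies in the fixed ambient space $\bigoplus_{j=1}^{m(\zeta_0)}(\zeta-\zeta_0)^{-j}\otimes \calC^\infty(F)$ of truncated principal parts. Linear independence of the $s_k(y_0)$ is witnessed by the nonvanishing of a determinant formed from a suitable finite collection of continuous linear functionals applied to the $s_k$'s; by smoothness this determinant remains nonzero on some $U' \subset U$, so the $s_k(y)$ are linearly independent there. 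Since $\dim_\CC \calE_y(L;\zeta_0) = n$ by the Rouch\'e step, the $s_k(y)$ also span, giving a smooth local frame; smoothness of the transition functions between frames over overlapping patches is automatic from the smooth dependence of the construction on $y$. The main obstacle is precisely the constancy of $m(\zeta_0)$ under variation of $y$: without the operator-valued Rouch\'e theorem the rank could jump, and all the remaining steps amount to bookkeeping around the contour integral formulas.
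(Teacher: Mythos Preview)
Your argument is correct and follows essentially the same route as the paper's sketch (which defers to Krainer--Mendoza \cite{KM}): both hinge on the operator-valued Rouch\'e theorem to pin down $\dim \calE_y(L;\zeta_0) = m(\zeta_0)$ locally, and then construct a smooth local frame. You are simply more explicit than the paper's sketch about the mechanism for producing the frame, namely contour integrals of $I_y(L)(\zeta)^{-1}$ against fixed holomorphic germs; this is precisely the sort of construction underlying the $\phi_{y,j}$ alluded to in the sketch.

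One small remark: the paper's sketch records an additional consequence that you do not state, namely that \emph{any} smooth-in-$y$ element of $\ker I_y(L)$ decomposes with smooth coefficients $f_j(y)$ relative to the frame $\{\phi_{y,j}\}$. For the bare assertion that $\calE(L;\zeta_0)\to B$ is a smooth vector bundle your frame-plus-transition-functions argument is already sufficient, and that stronger statement then follows a posteriori from the bundle structure. So there is no gap; the difference is only in emphasis.
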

We sketch some elements of the proof (recalling however that those authors work in the more general setting where the 
$\zeta_j$ may also vary with $y$). For each $y_0 \in B$, and indicial root $\zeta_0 \in \mathfrak{S}(L)$, Krainer and Mendoza 
construct an independent set of smooth functions $\{\phi_{y_0, j}\}_{j=1}^{m_0}$, $m_0=m(\zeta_0)$, in a neighbourhood $\calU$ of 
$y_0$, which form a basis of $\calE_{y}(\zeta_0)$ for each $y\in \calU$. They then show that if $\phi(t,y,z) \in t^{\ld}L^2$ 
depends smoothly on $y \in \calU \subset B$ and $z \in F$, and $I_y(L) \phi \equiv 0$, then there exist smooth functions 
$f_j:\calU\to \CC$, $j=1,.., m_0$ such that 
\[
\phi(t,y,z) = \sum_{j=1}^{m_0} f_j(y) \phi_{y,j}(t,z).
\]
It follows from this that $\calE(L; \zeta_0)$ is a smooth vector bundle over $B$.  A nonobvious 
consequence is that the ranges of the various singular Laurent coefficients of $I_{y}(L)(\zeta)^{-1}$ remain 
independent of one another as $y$ varies. 

To conclude, let us remark that the full strength of Assumption~\ref{assump-const-ind-rts} is not needed: it is 
only necessary that every indicial root with real part in the critical interval $[\ld, \ud]$ for any $y_0$ is independent 
of $y_0$, so in particular, there are no indicial roots with imaginary parts crossing the levels $\ud$, $\ld$.  We shall 
phrase most results as if all indicial roots are constant, but remark at various points how 
results change in this slightly more general setting. 
  
\medskip

\noindent{\bf Mapping properties} 
Each of the model operators described above plays an important role in determining the refined mapping properties of $L$. 
The basic result, stated more carefully below, is that if both ${}^e\sigma_m(L)$ and $N(L)$ are invertible (as a bundle map and
as an operator between weighted Sobolev spaces, respectively); we encompass this pair of properties by saying 
that $L$ is {\it fully elliptic} -- then $L$ itself is Fredholm between the analogous weighted Sobolev spaces. For this reason, 
the pair $({}^e \sigma_m(L), N(L))$ should be regarded as the full symbol of $L$.  This is the simplest nontrivial case of a symbol 
hierarchy for iterated edge structures (as in Schulze's work). 

We shall let $L$ act on weighted Sobolev and H\"older spaces.  Fix a reference measure $dV = dx dy dz$ (more precisely,
$dV$ is a smooth, strictly positive multiple of Lebesgue measure). For any $k \in \mathbb N_0$, define
\[
H^k_e(M) = \{u:  V_1 \ldots V_\ell u \in L^2(dV)\ \forall\, V_j \in \Ve\ \mbox{and}\ \ell \leq k\}.
\]
Using interpolation and duality (or using edge pseudodifferential operators) one also defines $H^s_e(M)$ for any $s \in \RR$. 
We also define their weighted versions
\[
x^\delta H^s_e(M) = \{ u = x^\delta v:  v \in H^s_e(M)\}. 
\]
Note that these are the Sobolev spaces associated to any complete edge metric $g$ (though the measure $dV$ is equal
to $x^{b+1}$ times the Riemannian density for such a metric).  

Similarly, we define the H\"older seminorm
\[
[ u ]_{e; 0,\alpha} = \sup_{ (x,y,z) \neq (x',y',z')} \frac{ |u(x,y,z) - u(x',y',z')|(x+x')^\alpha}{ |x-x'|^\alpha + |y-y'|^\alpha + 
(x+x')^\alpha |z-z'|^\alpha}.
\]
This is simply the standard H\"older seminorm associated to the Riemannian distance associated to the complete metric $g$.
The edge H\"older space $\Lambda^{0,\alpha}_e(M)$ consists of functions $u$ such that $\sup |u| + [ u ] _{e, 0,\alpha} < \infty$.
We also define the weighted edge H\"older spaces
\[
x^\delta \Lambda^{k,\alpha}_e(M) = \{ u = x^\delta v:  V_1 \ldots V_\ell v \in \Lambda^{0,\alpha}_e(M)\ \ell \leq k\ \mbox{and}\ 
V_j \in \Ve\}.
\]

It is clear from the definitions that if $L \in \Diffe^m(M)$, then 
\begin{eqnarray}
& L: x^\delta H^s_e(M) &\longrightarrow x^\delta H^{s-m}(M)  \label{Lsob} \\
& L: x^\delta \Lambda^{k+m,\alpha}_e(M) &\longrightarrow x^\delta \Lambda^k_e(M) \label{Lhold}
\end{eqnarray}
are bounded mappings for every $\delta, s \in \RR$ and $k \in \mathbb N_0$. This is \cite[Cor. 3.23]{M}. 

The basic and most important mapping property for elliptic edge operators is the following. 
\begin{prop}(\cite[Thm. 6.1]{M}) Suppose that $L \in \Diffe^m(M)$ is elliptic satisfying the
Assumption~\ref{assump-const-ind-rts}, and that $\delta \notin \mbox{Spec}_b(L)$. Suppose finally that
\[
B_{y_0, \he}(L):  t^\delta H^m_b( \RR^+\times F; t^{-1} dt dz) \longrightarrow t^\delta L^2(\RR^+ \times F; t^{-1} dt dz)
\]
is invertible for every $(y_0, \he)$. Then both \eqref{Lsob} and \eqref{Lhold} are Fredholm mappings. If
we only know that $B(L)$ is injective for all $(y_0, \he)$, then \eqref{Lsob} and \eqref{Lhold} are
semi-Fredholm and essentially injective; if $B(L)$ is surjective for every $(y_0, \he)$, then \eqref{Lsob} and \eqref{Lhold}
are semi-Fredholm and essentially surjective. 
\label{semi-fred}
\end{prop}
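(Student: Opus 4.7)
The plan is to construct a two-sided parametrix $G$ for $L$ within the edge pseudodifferential calculus of \cite{M}, such that
\[
LG = I - R_1, \qquad GL = I - R_2,
\]
where $R_1, R_2$ are compact operators on $x^\delta H^s_e(M)$ (and on the weighted H\"older spaces). Fredholmness then follows by standard functional analysis. The construction proceeds in two stages, each stage removing a piece of a symbol in the hierarchy $({}^e\sigma_m(L), N(L))$. First, since ${}^e\sigma_m(L)$ is invertible off the zero section, a standard symbolic iteration inside the small edge calculus produces $G_0 \in \Psi_e^{-m}(M)$ (with Schwartz kernel conormal on the edge double space) such that $LG_0 = I - S_0$, where $S_0$ is smoothing in the interior but can have nontrivial behaviour at the front face and left/right faces. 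Equivalently, $S_0$ is a residual operator in the small calculus, characterized by its normal operator $N(S_0)$.

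The second, and main, stage is to kill the normal-operator contribution of $S_0$. The invertibility hypothesis on $B_{y_0,\hat\eta}(L)$ on $t^\delta L^2(\RR^+\times F; t^{-1}dt\, dz)$, uniformly in the parameters $(y_0,\hat\eta)$, is precisely what is needed to invert $N_{y_0}(L)$ on the model half-space $\RR^+_s \times \RR^b_u \times F$ in the weighted $L^2$ sense: one applies Fourier transform in $u$ and rescales $t=s|\eta|$ as in \eqref{Bessel}, and the family $B_{y_0,\hat\eta}(L)^{-1}$, together with the large calculus description of its Schwartz kernel, yields an operator $G_1$ in the (large) edge calculus with $N(G_1) = N(L)^{-1}$ and $N(S_0 - L G_1) \equiv 0$. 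Setting $G = G_0 + G_1$, we obtain $LG = I - R_1$ where $R_1$ is a true residual operator in the full edge calculus: its Schwartz kernel is polyhomogeneous on the edge double space with vanishing normal operator and appropriate index sets at the left, right, and front faces, governed by the indicial roots and by the weight $\delta$.

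Now it must be checked that, for our chosen weight $\delta \notin \mathrm{Spec}_b(L)$, the mapping properties of the edge calculus imply that $G: x^\delta H^{s-m}(M) \to x^\delta H^s_e(M)$ is bounded and $R_1$ is compact on $x^\delta H^{s-m}$. Boundedness of $G$ follows from the general mapping theorem for the edge calculus on weighted Sobolev spaces (this is where the condition $\delta \notin \mathrm{Spec}_b(L)$ enters, to avoid the indicial shift). Compactness of $R_1$ follows because its kernel vanishes to positive order on the front face and has favourable index sets on the side faces, producing a composition into a space with extra decay/regularity; Rellich-type compactness for the edge Sobolev embeddings then applies. The same analysis on the left yields $G'$ with $G'L = I - R_2$ compact, and a standard argument identifies $G$ and $G'$ up to finite-rank operators. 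Exactly the same scheme, with H\"older estimates for the kernel of $G$ replacing $L^2$ estimates, gives the Fredholm statement for $x^\delta \Lambda^{k+m,\alpha}_e \to x^\delta \Lambda^{k,\alpha}_e$.

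The semi-Fredholm cases are handled by constructing only a one-sided parametrix. If $B(L)$ is merely injective uniformly in $(y_0,\hat\eta)$, then $N(L)$ admits a left inverse in the model calculus, yielding $G'$ with $G'L = I - R_2$, $R_2$ compact, which gives a finite-dimensional kernel and closed range (essentially injective). Surjectivity of all $B(L)$ produces a right inverse, hence $LG = I - R_1$ with $R_1$ compact and essentially surjective. The main obstacle, and the technical heart of the argument, is the second stage: constructing the polyhomogeneous Schwartz kernel of $G_1$ on the edge double space with prescribed behaviour at all boundary faces, so that $G_1$ is bounded and its composition with $L$ yields a genuinely compact remainder on the specified weighted spaces. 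This is carried out in detail in \cite[\S 5--6]{M}, and we refer the reader there; the results of the present paper will re-use these constructions.
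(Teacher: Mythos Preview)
The paper does not give a proof of this proposition; it is stated with a citation to \cite[Thm.~6.1]{M} and used as a black box thereafter. Your proposal correctly outlines the two-stage parametrix construction carried out in \cite{M}---symbolic inversion in the small calculus followed by correction of the normal operator via the Bessel inverses, with the compactness of the remainder coming from the extra front-face vanishing---and you yourself defer the technical details to \cite[\S5--6]{M}, just as the present paper does. So your sketch is consistent with the cited source and there is nothing to compare against in this paper itself.
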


\noindent{\bf Normalizations and conventions.} We first rewrite Assumption~\ref{assump-uniq-cont} in the
following form: 
\begin{assumption}
There exists values $\ld < \ud$, $\ld, \ud \notin \mbox{Spec}_b(L)$, such that, for every $(y_0, \he)$,
\[
B_{y_0, \he}(L): t^{\ld}H^m_b \longrightarrow t^{\ld} L^2
\]
is surjective, and
\[
B_{y_0, \he}(L):  t^{\ud}H^m_b \longrightarrow t^{\ud}L^2
\]
is injective.
\label{defdlb}
\end{assumption}
\begin{remark}
It is enough to assume that an `injectivity weight' $\ud$ exists for both $B(L)$ and its adjoint $B(L)^*$ (taken with
respect to any fixed measure of the form $t^\gamma dt dz$). This holds simply because injectivity of $B(L)^*$ 
on some $t^\delta L^2$ is equivalent to surjectivity of $B(L)$ on another space $t^{\delta^*}L^2$, where $\delta^*$
is determined by $\delta$ and $\gamma$. 
\end{remark}
Based on this, we see that Assumption~\ref{defdlb} will hold if both $B(L)$ and $B(L)^*$ satisfy the more basic
\begin{assumption}[Unique continuation property]
Any solution $u$ to $B(L) u = 0$ which vanishes to infinite order at $t = 0$ and which has subexponential growth as $t \to \infty$
is the trivial solution $u \equiv 0$. 
\label{uniq-cont}
\end{assumption}
That this should always be true is quite believable, but has not been proved in general. It is known to hold in
the special case where $L$ is second order with diagonal principal part and $\dim F = 0$, see \cite{Ma-UC}.

\begin{remark}
Another observation which simplifies notation below is that the precise choice of measure $t^\gamma dt dz$ for
$B(L)$, or $x^\delta dx dy dz$ for $L$, (or other measures which differ from these by a smooth function $J$
which is uniformly bounded above and away from $0$) is irrelevant for these various mapping and regularity
properties. Obviously, the values of $\gamma$ and $J$ enter into the precise computations of adjoints, normalization
of weight parameters, etc., but do not in any way effect the nature of the any of the results below. 

Thus we always assume that we are working with respect to the measure $dt dz$, or $dx dy dz$. 
We also fix the two values $\ld$ and $\ud$ (and this choice of fixed measures) henceforth for the rest of the paper. 
\label{fixmeas}
\end{remark}

One final remark: as noted earlier, we really only need to assume constancy of indicial roots with real part in the interval 
$[\ld, \ud]$, though in that more general case, one has slightly weaker regularity statements (conormality 
rather than complete polyhomogeneity). 

\medskip

\noindent{\bf Generalized inverses}
Assume that $L\in  \textup{Diff}^m_e(M)$ is elliptic and satisfies Assumptions~\ref{assump-const-ind-rts}
and \ref{uniq-cont}, and that $\ld$ and $\ud$ have been chosen as above.  By Proposition~\ref{semi-fred}, the mapping
\eqref{Lsob} is semi-Fredholm whenever $\delta \geq \ud$ or $\delta \leq \ld$, and in either case, $\delta \notin \mbox{Spec}_b(L)$;
in other words, this mapping has closed range, and either finite dimensional nullspace or finite dimensional cokernel, respectively.
General functional analysis then gives, for each $s \in \RR$, the existence of a generalized inverse $G$ for \eqref{Lsob}, which
is to say, there exists a bounded map
\begin{equation}
G:x^\delta H^s_e(M) \to x^\delta H^{s+m}_e(M)
\end{equation}
which satisfies $GL = I - P_1$, $LG = I - P_2$ where $P_1$ and $P_2$ are the orthogonal projectors onto the
nullspace of $L$ and orthogonal complement of the range of $L$, respectively.  By the simplest form of elliptic 
regularity in the edge setting, we obtain that $P_1$ and $P_2$ are both smoothing in the sense that
$P_1: x^\delta H^{s+m}_e \to x^\delta H^{t+m}_e$ and $P_2: x^\delta H^s_e \to x^\delta H^t_e$ are
bounded for any $t > s$. 

Much more is true, and one of the strengths of the pseudodifferential edge theory is that it allows one
to give a fairly explicit description of the Schwartz kernels of these operators.  Fix $\delta$ as above, and
set $s=0$ (to normalize the choice of projectors). Then Theorem 6.1 in \cite{M} asserts that $G$, $P_1$ and $P_2$ 
are all pseudodifferential edge operators. When $\delta > \ud$, then $P_1$ has finite rank and maps into the space 
of polyhomogeneous functions, while when $\delta < \ld$, then $P_2$ has finite rank and maps into the space of 
polyhomogeneous functions.  We shall recall the definitions of these spaces of pseudodifferential operators in \S 4,
but for now point out that this description of their Schwartz kernels has a number of important ramifications.  
For example, once one establishes a general boundedness
theorem for pseudodifferential edge operators on weighted edge H\"older spaces, then it is an immediate
consequence of this Sobolev semi-Fredholmness that one can then deduce that for this same value of $\delta$,
the mapping \eqref{Lhold} is also semi-Fredholm, and that $P_1$ and $P_2$ are the appropriate projectors
in that case too.  Indeed, the equations $GL = I - P_1$ and $LG = I - P_2$ still hold, and all operators are
bounded on the appropriate spaces. Note in particular that if $\delta < \ld$, for example, then the nullspace 
of \eqref{Lhold} is infinite dimensional and in this case it does not follow from general theory that this 
nullspace is complemented in $x^\delta \Lambda_e^{m+k}$. Nonetheless, since the infinite rank projectors 
$P_1$ and $I - P_1$ are bounded, we see that this nullspace has a complement, as claimed.

\section{Outline and statement of the main result}\label{overview}
We are now in a position to provide a more careful statement of our main results and to
sketch the arguments to prove them. 

There are several closely related conceptual frameworks for studying elliptic boundary problems;
the one we follow here is very close to the one developed by Boutet de Monvel \cite{BdM}, and used
in many other places since, including by Schulze \cite{S1, S2} for edge operators. This theory is 
centered around the idea of extending the use of `interior' pseudodifferential edge operators
by introducing the associated spaces of trace and Poisson operators, as well as boundary 
operators along the edge $B$. 

What distinguishes our approach here is the focus on the geometric structures of the
Schwartz kernels of these various types of operators.  As in \cite{M}, any one of
these operators has a Schwartz kernel which is a polyhomogeneous distribution
on a certain blown up space. Section~\ref{micro} describes all of this more carefully.
Amongst the tasks we must face is to show that the composition of an interior
edge operator and a trace operator (interior to boundary) is again a trace operator,
and similarly the composition of a Poisson operator (boundary to interior) with
an interior edge operator is again of Poisson type. These composition formul\ae\ 
are perhaps the most technically demanding part of this presentation. 

The operators which arise in these elliptic boundary problems are of a somewhat
more special type, which we call representable. This is described in \S \ref{representable},
where we introduce these subclasses of interior, trace and Poisson edge operators 
and examine their normal operators. 

Following these more general `structural' definitions and results, we turn to the analysis 
specific to elliptic differential edge operators.  In the steps below, we first define each object 
at the level of Bessel operators, where the issues are typically finite dimensional.  We then
rescale and take inverse Fourier transforms and obtain the corresponding objects at the
level of normal operators. Although everything becomes infinite dimensional, it is still
completely equivalent to the finite dimensional problem. The last step is to extend each
object from the normal operator level to that of the actual operators, and this is where
the special class of representable operators becomes important.  

The starting point is to identify the spaces on which the boundary trace map is well defined. 
We define
\begin{equation}
\begin{split}
\calH^{B(L)}_{\ud,\ld} & = \{u \in t^{\ld} H^m(\RR^+ \times F_{y_0}; dt\, dz) \mid B_{y_0,\he}(L) u \in t^{\ud} L^2\} \\
\calH^{N(L)}_{\ud,\ld} & = \{u \in s^{\ld} H^m(\RR^+ \times \RR^b \times F_{y_0}; ds\, dY \, dz) \mid N_{y_0}(L) u \in s^{\ud} L^2\} \\
\calH^{L}_{\ud,\ld} & = \{u \in x^{\ld} H^m(M; dx \, dy\, dz) \mid L u \in x^{\ud} L^2 \},
\end{split}
\label{defHsp}
\end{equation}
where by implication the second inclusion is supposed to hold for all $y_0, \he$.  For simplicity we often denote
these simply as $\calH^B$, $\calH^N$ and $\calH$, omitting the subscript $\ud,\ld$. These are Hilbert spaces with respect 
to the norms
\begin{equation}
\begin{split}
||u||_{\calH^B} & = || u||_{t^{\ld}L^2} + || B(L)u ||_{t^{\ud}L^2} \\ 
||u||_{\calH^N} & = || u||_{s^{\ld}L^2} + || N(L)u ||_{s^{\ud}L^2} \\ 
||u||_{\calH} & = || u||_{x^{\ld}L^2} + ||L u ||_{x^{\ud}L^2}.
\end{split}
\end{equation}

In \S \ref{trace-pot} we construct successively the trace and Poisson operators associated to an elliptic edge operator $L$
by first constructing the corresponding operators for $B(L)$ and $N(L)$.  Since $B(L)$ is Fredholm at all nonindicial weights, 
most of the considerations for it are finite dimensional and we may formulate the analogue of the Calderon, or
Lopatinski-Schapiro conditions directly.  The starting point is that $\calH^B$ is the natural domain for the boundary trace
map for $B(L)$, and in fact for each $y_0 \in B$,
\[
\Tr_{B(L)}: \calH^B  \longrightarrow \calE_{y_0} := \bigoplus \calE _{y_0} (L, \zeta_j), 
\]
where $\calE_{y_0}(L, \zeta_j)$ is the fibre of the trace bundle \eqref{trace-bundle} associated to the indicial root $\zeta_j$ at $y_0$ 
and the direct sum is over all indicial roots with imaginary part in the interval $(\ld-1/2, \ud-1/2)$.  The
corresponding trace map for the normal operator $N(L)$ is obtained by rescaling and taking the inverse Fourier transform, and
\[
\Tr_{N(L)}:  \calH^N  \longrightarrow \bigoplus H^{-(\Im (\zeta_j) -\ld +1/2)}(\RR^b; \calE_{y_0}). 
\]
The trace map for $L$ itself is bounded as a map
\[
\Tr_{L}: \calH \longrightarrow \bigoplus H^{-(\Im (\zeta_j) -\ld +1/2)}(B; \calE). 
\]
In a similar way, we construct the Poisson edge operators $P_{B(L)}$, $P_{N(L)}$ and 
\[
P_L: \bigoplus H^{-(\Im (\zeta_j) -\ld +1/2)}(B, \calE(\zeta_j)) \longrightarrow \ker L \cap x^{\ld}H^{\infty}_e(M). 
\]
By construction, $P_L \circ \Tr_L$ is the identity on $\ker L \cap \calH$.  The Calderon subspaces
\begin{multline*}
\calC_{B(L)} = \Tr_{B(L)} ( \ker B(L) \cap \calH^B), \quad \calC_{N(L)} = \Tr_{N(L)} ( \ker N(L) \cap \calH^N), \quad \mbox{and}\\
\hfill \calC_{L} = \Tr_{L} ( \ker L \cap \calH) \hfill
\end{multline*}
are of fundamental importance. For $B(L)$ this subspace depends smoothly on $(y_0,\he)$, and for $N(L)$ it
depends smoothly on $y_0$. 

Let us now explain how to formulate a boundary problem for the edge operator $L$.  Fix a vector bundle $W$ over $B$
and a pseudodifferential operator 
\[
Q:\calC^\infty (B, \calE) \to \calC^\infty (B,W).
\]
For many operators of interest, $W$ splits as a finite direct sum $\bigoplus W_k$, and of course $\calE$ also splits
into the summands corresponding to each indicial root, so $Q$ has a matrix form $(Q_{jk})$ where the different components
may have different orders. 

\begin{defn}
With all notation as above, an edge boundary value problem $(L,Q)$ is a system 
\begin{align*}
Lu & = f\in x^{\ud}L^2(M), \ u \in \calH_{\ld,\ud} \subset x^{\ld}H^m_e(M), \\
Q (\Tr_L u) & =\phi \in \bigoplus\limits_{k=1}^M H^{\ld -d_k-1/2}(B,W_k). 
\end{align*}
\end{defn}
As in the classical theory on a manifold with boundary, the determinantion of whether this problem is Fredholm is formulated
using the (left or right) invertibility of the principal symbol of the boundary conditions restricted to the Calderon subspace: 
\begin{defn}
The boundary conditions $Q$ of an edge boundary value problem $(L,Q)$ are 
\begin{enumerate}
\item right-elliptic if $\sigma(Q)(y_0,\hat\eta)\restriction \calC_{B(L)(y_0,\he)}: \calC_{B(L)(y_0,\he)} \to \pi^* W_{y_0}$ 
is surjective,
\item left-elliptic if $\sigma(Q)(y_0,\hat{\eta})\restriction \calC_{B(L)(y_0,\he)}: \calC_{B(L)(y_0,\he)} \to \pi^* W_{y_0}$ 
is injective, and
\item elliptic if $\sigma(Q)(y_0,\hat{\eta})\restriction \calC_{B(L)(y_0,\he)}:\calC_{B(L)(y_0,\he)} \to \pi^* W_{y_0}$ 
is an isomorphism
\end{enumerate}
for all $(y_0,\hat\eta) \in S^*B$, where $\pi: S^*B \to B$ is the standard projection.
\label{typesofbcs}
\end{defn}

The final section, \S \ref{fredholm}, assembles the various types of operators considered earlier to construct parametrices
in each of these three cases. Our main result is the 
\begin{thm}
Let $(L,Q)$ be right-elliptic.  Let $G$ be the generalized inverse for $L$ on $x^{\ld}L^2$. Then 
\[
(L,Q): (\calH,\|\cdot \|_\calH)\to x^{\ud}L^2(M)\oplus \left(\bigoplus\limits_{k=1}^MH^{\ld-d_k-1/2}(B,W_k)\right),
\]
is semi-Fredholm with right parametrix
\begin{align*}
\mathcal{G}(f,\phi)= Gf + P_L[K(\phi - Q(\Tr_{L}Gf))].
\end{align*}
In particular, $(L,Q)$ has closed range of finite codimension.
\end{thm}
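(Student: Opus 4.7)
The proof amounts to verifying directly that the proposed formula $\mathcal{G}$ serves as a right parametrix for $(L,Q)$ modulo compact operators; once this is established, the semi-Fredholm conclusion with finite codimensional closed range follows by a standard functional-analytic argument.

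First I would construct the symbolic right inverse $K$. Right-ellipticity states that $\sigma(Q)(y_0,\hat\eta)\restriction \calC_{B(L)(y_0,\he)}$ surjects onto $\pi^* W_{y_0}$, so there is a smoothly varying right inverse as a bundle map on $S^*B$. Quantizing this symbol produces a pseudodifferential operator
\[
K: \bigoplus_k H^{\ld-d_k-1/2}(B,W_k) \longrightarrow \bigoplus_j H^{-(\Im(\zeta_j)-\ld+1/2)}(B,\calE(\zeta_j))
\]
satisfying $\sigma(Q)\, K = I + T_1$ with $T_1$ compact. Since $P_L$ maps this target into $\ker L \cap x^{\ld}H^{\infty}_e(M) \subset \calH$ (by the mapping properties recorded in \S\ref{trace-pot}) and since $Gf \in x^{\ud}H^m_e \subset \calH$ by construction of $G$, the operator $\mathcal{G}$ is bounded from $x^{\ud}L^2 \oplus \bigoplus H^{\ld-d_k-1/2}(B,W_k)$ into $\calH$.

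Next I would compute $(L,Q)\circ \mathcal{G}$. The $L$-component is immediate: $LP_L \equiv 0$ and $LG = I - P_2$ with $P_2$ of finite rank and smoothing, so $L\,\mathcal{G}(f,\phi) = f - P_2 f$. For the boundary component, the key identity is that $\Tr_L\circ P_L = I + S$ modulo a compact operator $S$, which is precisely what the Poisson construction is engineered to achieve. Combined with $\sigma(Q)K = I + T_1$, one computes
\[
Q\,\Tr_L\,\mathcal{G}(f,\phi) = Q\,\Tr_L Gf + Q(I+S)K(\phi - Q\,\Tr_L Gf) = \phi + R(f,\phi),
\]
where $R$ assembles the various compact error terms. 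Thus $(L,Q)\circ \mathcal{G} = I - \calK$ with $\calK$ compact, so $\mathcal{G}$ is a right parametrix modulo compacts. A bounded operator between Banach spaces admitting such a right parametrix must have closed range of finite codimension, which is the semi-Fredholm statement.

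The hard part is establishing that $\Tr_L \circ P_L$ is the identity modulo a compact (indeed smoothing) operator on the relevant Sobolev spaces of sections of $\calE$. This requires analysis of the Schwartz kernel of the composition using the polyhomogeneous structure on the appropriate blown-up space developed in \S\ref{micro}. The strategy is to verify the identity first at the level of the Bessel operator $B(L)$, where $\Tr_{B(L)} \circ P_{B(L)}$ is genuinely the identity on the finite dimensional Calderon subspace $\calC_{B(L)}$; then lift to the normal operator $N(L)$ by rescaling and inverse Fourier transform; and finally globalize to $L$ using the class of representable operators from \S\ref{representable}. Each lifting step only introduces lower-order (compact) corrections, so the principal composition identity is preserved throughout.
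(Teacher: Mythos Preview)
Your approach is essentially the same as the paper's: compute $(L,Q)\circ\calG$ directly, use $LP_L=0$ and $LG=I-P_2$ for the interior component, and show that $Q\circ\Tr_L\circ P_L\circ K - I$ is a pseudodifferential operator on $B$ with vanishing principal symbol (hence compact) for the boundary component. The paper carries this out exactly, appealing to the representable-operator formalism to identify the principal symbol of the composition as $\sigma(Q)\circ(\Tr_{B(L)}\circ P_{01}\circ P_0)\circ\sigma(K)$ and observing the cancellation.

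Two small corrections to your write-up. First, the claim $Gf\in x^{\ud}H^m_e$ is not correct; $G$ maps $x^{\ld}L^2$ to $x^{\ld}H^m_e$, not to the higher weight. What is true, and what the paper verifies via the graph norm, is $Gf\in\calH$ because $LGf=f-P_2f\in x^{\ud}L^2$. Second, the intermediate assertion ``$\Tr_L\circ P_L = I + S$ with $S$ compact'' is not valid on the full trace bundle: at the Bessel level, $\Tr_{B(L)}\circ P_{01}\circ P_0$ is the identity only on the Calderon subspace $\calC_{B(L)}$, not on all of $\calE_{y_0}$ (on the complement it is a projection). The argument is rescued precisely because $\sigma(K)$ maps into $\calC_{B(L)}$ by construction, so the identity holds after precomposing with $K$. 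The paper handles this by computing the symbol of the entire four-fold composition at once rather than isolating $\Tr_L\circ P_L$; you should do the same, or at least state the identity as $\Tr_L\circ P_L\circ K = K + (\text{compact})$.
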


\begin{thm}
Let $(L,Q)$ be left-elliptic. Then 
\[
(L,Q): (\calH,\|\cdot \|_\calH)\to x^{\ud}L^2(M)\oplus \left(\bigoplus\limits_{k=1}^MH^{\ld-d_k-1/2}(B,W_k)\right),
\]
is semi-Fredholm with left parametrix
\begin{align*}
\mathcal{G}(f,\phi)= Gf + P_L[K(\phi - Q(\Tr_{L}\, Gf))].
\end{align*}
In particular, $(L,Q)$ has a finite-dimensional kernel.
\end{thm}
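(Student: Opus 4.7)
The plan mirrors the right-elliptic theorem but uses a \emph{left} symbolic parametrix $K$ for $\sigma(Q)\restriction \calC_{B(L)}$ in place of a right one. As before, let $G$ be the generalized inverse of $L$ on $x^{\ld}L^2$ (which exists by Proposition~\ref{semi-fred}), so that $GL = I - P_1$ with $P_1$ the finite-rank polyhomogeneous projector onto $\ker L \cap x^{\ld}H^m_e$. From the construction of $P_L$ and $\Tr_L$ one has $P_L\circ \Tr_L = I$ on $\ker L \cap \calH$.

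First I would construct $K$. Left-ellipticity asserts that $\sigma(Q)(y_0,\hat\eta)\restriction \calC_{B(L)(y_0,\he)}$ is fibrewise injective with closed range. Since $\calC_{B(L)}$ is a smooth subbundle of the pullback of $\calE$ to $S^*B$, a smooth choice of supplementary subbundle to the image produces a smooth symbolic left inverse. Quantising this within the edge boundary calculus yields a pseudodifferential operator
\[
K:\bigoplus_k H^{\ld-d_k-1/2}(B,W_k) \longrightarrow \bigoplus_j H^{-(\Im \zeta_j - \ld + 1/2)}(B,\calE(\zeta_j))
\]
with the property that $KQ = I + R$ on sections of $\calE$ valued in the Calderon bundle $\calC_L$, where $R$ is a smoothing operator on $B$.

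Next I would verify the left-parametrix identity. For $u \in \calH$, set $f = Lu$ and $\phi = Q(\Tr_L u)$; then $Gf = u - P_1 u$ and $\Tr_L u - \Tr_L G f = \Tr_L P_1 u$ (since $P_1 u \in \ker L$), whence
\begin{align*}
\mathcal{G}(Lu, Q\Tr_L u) &= (u - P_1 u) + P_L\bigl[KQ(\Tr_L P_1 u)\bigr] \\
&= (u - P_1 u) + P_L(\Tr_L P_1 u) + P_L\bigl[R(\Tr_L P_1 u)\bigr] \\
&= u + P_L\, R\, \Tr_L\, P_1\, u.
\end{align*}
The remainder $P_L R \Tr_L P_1$ factors through the finite-rank smoothing projector $P_1$ and the smoothing operator $R$, hence is compact on $\calH$. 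Therefore $\mathcal{G}\circ (L,Q) = I + \calK$ with $\calK$ compact, which implies $(L,Q)$ is left-semi-Fredholm: its kernel is finite-dimensional and its range is closed.

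The main obstacle is the first step: the pseudodifferential construction of $K$ and the verification that $KQ = I + R$ on $\calC_L$ modulo smoothing. One must check that the symbolic left inverse can be quantised inside the edge boundary calculus with the correct Sobolev order shifts $-d_k$ and $-(\Im \zeta_j - \ld + 1/2)$ dictated by the orders of the $Q_{jk}$ and the indicial roots, and that the composition $KQ$ reduces to the identity on the Calderon subspace modulo smoothing. This relies decisively on the smoothness of the bundle $\calC_{B(L)} \to S^*B$ and on the composition formulas between interior, trace and Poisson edge operators established in the preceding sections; once these are in hand, the rest of the argument is a routine bookkeeping of mapping properties, and the boundedness of $\mathcal{G}$ into $\calH$ follows from the known continuity of $G$, $P_L$, $K$, $Q$ and $\Tr_L$ on the relevant weighted Sobolev scales.
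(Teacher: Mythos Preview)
There is a genuine gap: your claim that $P_1$ is finite rank is false, and your compactness argument rests on it. At the weight $\ld$ the operator $L$ is essentially \emph{surjective}, not essentially injective; the projector $P_2$ onto the cokernel is finite rank, but $P_1$, the orthogonal projector onto $\ker L \cap x^{\ld}L^2$, has \emph{infinite} rank. Indeed, the entire purpose of imposing boundary conditions is to cut this infinite-dimensional nullspace down to something finite-dimensional, so if $P_1$ were finite rank the theorem would be vacuous. Consequently the sentence ``factors through the finite-rank smoothing projector $P_1$ \ldots hence is compact'' does not establish anything.

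The paper proceeds differently and avoids this trap. It takes $K$ with principal symbol equal to a left inverse of $\sigma(Q)\restriction \calC_{B(L)}$ (no attempt to make $KQ=I+R$ with $R$ smoothing), obtains the same algebraic identity $\calG(L,Q)u = u + (P_L\circ K\circ Q\circ \Tr_L - I)P_1 u$, and then shows compactness of the error as an \emph{edge} operator: using the composition formul{\ae} for representable operators it computes the normal operator of $(P_L K Q \Tr_L - I)P_1$ explicitly at the Bessel level and checks that it vanishes (this is where left-ellipticity is actually used, via $\sigma(K)\sigma(Q)=I$ on $\calC_{B(L)}$ together with Lemma~\ref{bessel-P-T}). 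Vanishing of the normal operator promotes the error from $\Psi^{-\infty,0,\calE}_e$ to $\Psi^{-\infty,1,\calE}_e$, and compactness then follows from \cite[Prop.~3.29]{M}. Note also that the paper only proves $N(P_L)\circ \Tr_{N(L)}=I$ on $\ker N(L)$, not the exact identity $P_L\circ\Tr_L=I$ on $\ker L$ that your algebra uses; working at the normal-operator level absorbs this discrepancy as well.
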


These results together prove that an elliptic edge boundary problem gives a Fredholm mapping. 

\section{Interior, trace and Poisson edge operators}\label{micro}
In this section we recall the space of pseudodifferential edge operators and introduce the corresponding spaces 
of trace and Poisson operators. As explained earlier, our focus is on the Schwartz kernels of these operators, in
particular their structure as polyhomogeneous distributions.  We keep the notation of the preceding sections.

The definitions below are phrased in the language of manifolds with corners and various spaces of conormal or
polyhomogeneous functions on them, so we review some of this now. A manifold with corners is a space locally 
diffeomorphically modelled on neighbourhoods in the standard orthant $(\RR^+)^\ell \times \RR^{n-\ell}$. A standing 
assumption is that every boundary face of a manifold with corners is embedded. 
This implies, in particular, that if $H$ is a boundary hypersurface, then there is a globally defined boundary defining 
function $\rho_H$ which vanishes precisely on $H$ and is strictly positive everywhere else, and is such that $d\rho_H \neq 0$ at $H$.  

The most useful and natural classes of `smooth' functions on a manifold with corners $\mathfrak{W}$ are the conormal and 
polyhomogeneous distributions. Let $\{(H_i,\rho_i)\}_{i=1}^N$ enumerate the boundary hypersurfaces and corresponding
defining functions of $\mathfrak{W}$. For any multi-index $b= (b_1,\ldots, b_N)\in \C^N$ set $\rho^b = \rho_1^{b_1} \ldots \rho_N^{b_N}$.  
Similarly, for $p = (p_1, \ldots, p_N) \in \NN_0^N$, we write $(\log \rho)^p = (\log \rho_1)^{p_1} \ldots (\log \rho_N)^{p_N}$. 
Finally, let $\calV_b(\mathfrak{W})$ be the space of all smooth vector fields on $\mathfrak{W}$ which are unconstrained
in the interior but which lie tangent to all boundary faces.
\begin{defn}\label{phg}
A distribution $u$ on $\mathfrak{W}$ is said to be conormal of order $b$ at the faces of $\mathfrak{W}$, 
written $u \in \scrA^b(\mathfrak{W})$, 
if $u\in \rho^b L^\infty(\mathfrak{W})$ for some $b\in \C^N$ and $V_1 \ldots V_\ell u \in \rho^b L^\infty(\mathfrak{W})$ 
for all $V_j \in \calV_b(\mathfrak{W})$ and for every $\ell \geq 0$. 

An index set $E$ is a collection of pairs $\{(\gamma,p)\} \subset \CC \times \NN_0\}$ satisfying the following hypotheses:
\begin{enumerate}
\item $\Re \gamma$ accumulates only at plus infinity, while the second index $p$ for a given $\gamma$ is bounded above 
by a constant depending on $\gamma$, i.e.\ $p \leq P_\gamma < \infty$;
\item If $(\gamma,p) \in E$, then $(\gamma+j,p') \in E_i$ for all $j \in \NN$ and $0 \leq p' \leq p$. 
\end{enumerate}
An index family $\calE = (E_1, \ldots, E_N)$ is an $N$-tuple of index sets associated to each of
the boundary hypersurfaces of $\mathfrak{W}$.  In the rest of this paper, we typically let $k$ stand for 
the simple index set $\{(k+\ell,0): \ell \in \NN_0\}$. 

A conormal distribution $u$ on $\mathfrak{W}$ is said to be polyhomogeneous with index family $\calE$, 
$u \in \scrA_{\phg}^\calE(\mathfrak{W})$, if $u\in \scrA^*$, and if in addition, near each $H_i$, 
\[
u \sim \sum_{(\gamma,p) \in E_i} a_{\gamma,p} \rho_i^{\gamma} (\log \rho_i)^p, \ \mbox{as} \ \rho_i\to 0,
\]
with coefficients $a_{\gamma,p}$ conormal on $H_i$, polyhomogeneous with index $E_j$ at any $H_i\cap H_j$. 
We also require that $u$ have product type expansions at all corners of $\mathfrak{W}$. 
\end{defn}

A $p$-submanifold in a manifold with corners $\mathfrak{W}$ is an embedded submanifold with the property that
if $p \in S$, then it is possible to choose coordinates $(x,y) \in (\RR^+)^k \times \RR^{n-k}$ for $\mathfrak{W}$ 
with $p = (0,0)$, and such that $S = \{(x,y): x'' = 0, y'' = 0\}$, where $x = (x',x'')$ and $y = (y', y'')$ are some
subdivisions of these sets of coordinates. In other words, $\mathfrak{W}$ has a product structure near $S$.
We may then define the new manifold with corners $[\mathfrak{W}, S]$ by blowing up $\mathfrak{W}$ around $S$.
This consists of taking the disjoint union $\mathfrak{W} \setminus S$ and the inward-pointing normal bundle
of $S$, and endowing this set with the structure of a smooth manifold with corners, with the unique minimal
differential structure so that smooth functions on $\mathfrak{W}$ and polar coordinates around $S$ all lift to
be smooth. This blown up space has a `front face', which is a new boundary hypersurface which projects
down to $S$ in the `blowdown'; it is the total space of a fibration over $S$ with fibre some spherical orthant.

\subsection{Pseudodifferential edge operators}\label{edge-pseudos}
Let $M^2_e$ denote the double edge space, which is obtained by blowing up the fibre diagonal of $(\del M)^2$ 
in the product $M^2$, $M^2_e = [ M^2; \mathrm{fdiag}]$.  In standard adapted local coordinates $(x,y,z)$ 
on $M$ near $\del M$, with $(\wx,\wy,\wz)$ a copy of these coordinates on the other factor of $M$ in $M^2$, 
the fibre diagonal $\mathrm{fdiag}$ is the submanifold $\{x = \wx = 0, y = \wy\}$; it is the total space of a fibration over
$\mbox{diag}\,(B \times B)$ with fibre $S^n_+ \times F \times F$. The space $M^2_e$ is a manifold with corners up to codimension 
three; there are three boundary hypersurfaces, denoted $\ff$ (the front face), $\lf$ (the left face) and $\rf$ (the right face). 
The front face is the one created by the blowup; it is the total space of a fibration over $\mathrm{fdiag}$ with each fibre a copy 
of the quarter-sphere $\{\omega = (\omega_0, \omega', \omega_n) \in S^n: \omega_0, \omega_n \geq 0\}$. 

It is often more convenient to use projective coordinates rather than polar coordinates. Thus away from $\rf$, we use 
\begin{equation}
\label{proj-coord}
s=\frac{x}{\wx}, \, Y=\frac{y-\wy}{\wx}, \, z, \, \wx, \, \wy, \, \wz,
\end{equation}
where $\wx$ and $s$ are defining functions of $\ff$ and $\rf$, respectively. Note that in these coordinates, $\ff$
is the face where $\wx = 0$. There are analogous coordinates  valid away from $\rf$, obtained by interchanging the roles of $x$ and $\wx$. 

Figure 1 illustrates $M^2_e$

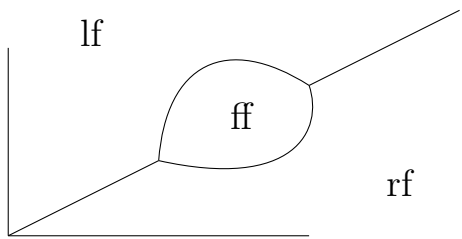
\begin{figure}[h]
\begin{center}
\begin{tikzpicture}
\draw (0,0) -- (2,1);
\draw (4,2) -- (6,3);
\draw (0,0) -- (4,0);
\draw (0,0) -- (0, 2.5);
\draw (2,1) .. controls (2.1,2.3) and (2.9,2.7) .. (4,2);
\draw (2,1) .. controls (3.8,0.6) and (4.2,1.4) .. (4,2);
\node at (3.1,1.6) {\large{ff}};
\node at (1.1,2.7) {\large{lf}};
\node at (5.2,0.7) {\large{rf}};
\end{tikzpicture}
\end{center}
\label{figure-edge}
\caption{The edge double space $M^2_e$.}
\end{figure}
This space has a distinguished submanifold, the edge diagonal $\mathrm{diag}_e$, which is the lift of 
the diagonal to $M^2_e$. (Strictly speaking, it is the closure of the lift of the interior of the diagonal.) 

A linear operator $A$ on $M$ is called a pseudodifferential edge operator of order $m$ and with index family 
$\calE$, $A \in \Psi_e^{m, \calE}(M)$, if the lift of its Schwarz kernel $K_A$ to $M^2_e$ is polyhomogeneous 
distribution on this space, where the index sets $\calE = (E_{\ff}, E_{\lf}, E_{\rf})$ describe the expansions at the three faces. The superscript $-\infty$ indicates
the pseudodifferential order, hence the lifted Schwartz kernel is smooth along $\mbox{diag}_e$.  The full space
of pseudodifferential edge operators, $\Psi^{*,\calE}_e(M)$, consists of the space of sums $A + B$ where 
$A$ is an operator of order $-\infty$ as above, and where the lift of the Schwartz kernel of $B$ to $M^2_e$ 
is supported near $\mbox{diag}_e$, has a classical conormal singularity along that submanifold, and is smoothly
extendible (after factoring out a certain singular density) across $\ff$.  To understand the singular density
here, note that the identity operator has Schwartz kernel which lifts as 
\[
\delta(x-\wx) \delta(y-\wy) \delta(z-\wz) d\wx d\wy d\wz = \delta(s-1) \delta(Y) \delta(z-\wz) \, \wx^{-b-1} d\wx d\wy d\wz.
\]
This is smoothly extendible across the front face, which in these projective coordinates is where $\wx = 0$, 
provided we factor out the final singular measure.   In the language above, $\mbox{Id} \in \Psi^{0, \varnothing}_e(M)$. 

There is a distinguished subalgebra $\Psi_e^{*}(M)$, called the small calculus, which consists of operators
which vanish to infinite order at the left and right faces, $E_{\lf} = E_{\rf} = \varnothing$, and with
$E_{\ff} = 0$.   The residual calculus $\Psi_e^{-\infty, 0, E_{\lf}, E_{\rf}}(M)$ consists of operators with no singularity
along the lifted diagonal and with standard index set $0$ at the front face. 

Many details have been suppressed here, and we refer to \cite{M} where all of this is described more carefully. 

\subsection{Edge trace operators}
Whereas the edge operators introduced in the previous subsection map functions on $M$ to functions on $M$, the
other two classes of operators we consider map functions on $M$ to functions on $\del M$ (these are the edge
trace operators) or functions on $\del M$ to functions on $M$ (these are the edge Poisson operators.  We now
describe the former of these. 

An edge trace operator $T$ is again described in terms of the lifting properties of its Schwartz kernel.
Initially this Schwartz kernel is a distribution on $\del M \times M$; this space has the same distinguished
submanifold as before, namely the fibre diagonal of $(\del M)^2$, $\mathrm{fdiag} = \{\wx = 0, y = \wy\}$.
We define the edge trace double space 
\[
T^2_e = [\del M \times M; \mathrm{fdiag}];
\]
note that this is nothing other than the right face $\rf$ of $M^2_e$. It has two boundary hypersurfaces, 
the new front face of which, still denoted here by $\ff$, is simply one boundary face of the front face
of $M^2_e$, and hence a bundle of hemispheres $S^{n-1}_+$ over $\mathrm{fdiag}$. The lift of the original
face here is denoted $\of$, and still called the original face. 

We can use the same projective coordinates as before, namely $(Y, z, \wx, \wy, \wz)$ with $Y = (y-\wy)/\wx$. 
Figure 2 illustrates this space. 

\begin{figure}[h]
\begin{center}
\begin{tikzpicture}
\draw (0,0) -- (2,1);
\draw (4,2) -- (5.5,2.75);
\draw (0,0) -- (0, 2.5);
\draw (2,1) .. controls (2.1,2.3) and (3.2,3) .. (4,2);
\node at (3.1,1.8) {\large{ff}};
\node at (6.1,3.1) {\large{rf}};
\end{tikzpicture}
\end{center}
\caption{The trace blowup $T^2_e$.}
\label{fig-trace}
\end{figure}
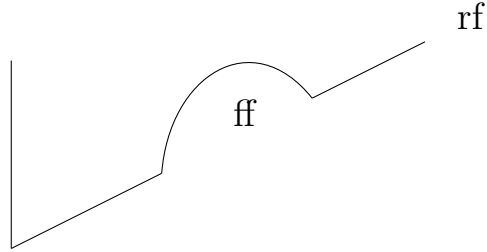

\begin{defn}\label{trace-op}
The space $\Psi^{k, F_{\rf}}_e(M)$ of trace operators of order $k\in \N_0$ 
is the space of all operators $T$ with Schwartz kernels $K_T$ which are pushforwards 
from polyhomogeneous conormal distributions $\kappa_T$ on the trace blowup space $T^2_e$ which
have index set $F_{\of}$ at the original face, and index set $F_{\ff}= (-1-b + k)+\N_0$ at the front face.
\end{defn}

\subsection{Edge Poisson operators}

The last class of operators we define are those which act from functions on $\del M$ to functions on $M$.
The ones amongst these in which we are particularly interested are analogues of the classical Poisson operators,
and hence take functions on the boundary to functions in the interior which are solutions of an elliptic edge operator $L$. 
However, it is advantageous to consider the full class of all operators with the relevant structure. 

The Schwartz kernel of an edge Poisson operator $P$ is a distribution on $M \times \del M$, and as usual,
we consider distributions which lift to be polyhomogeneous on the edge Poisson double space $P^2_e$,
obtained from $M \times \del M$ by blowing up the same fibre diagonal $\mathrm{fdiag}$.  The space
$P^2_e$ is naturally identified with the left face $\lf$ of $M^2_e$; it has two boundary hypersurfaces,
the front face $\ff$, which is `the other' boundary hypersurface of the front face of $M^2_e$, and the
original face $\of$. We often use projective coordinates $(x, z, Y, \wy, \wz)$ with $Y=(y-\wy)/x$. 
It is illustrated in Figure 3 (which is just the `transpose' of Figure 2). 

\begin{figure}[h]
\begin{center}
\begin{tikzpicture}
\draw (0,0) -- (2,1);
\draw (4,2) -- (5.5,2.75);
\draw (0,0) -- (4,0);
\draw (2,1) .. controls (4,0.6) and (4.3,1.7) .. (4,2);
\node at (3.3,1.5) {\large{ff}};
\node at (6,3) {\large{lf}};
\end{tikzpicture}
\end{center}
\caption{The Poisson double space $P_e$.}
\label{fig-pot}
\end{figure}
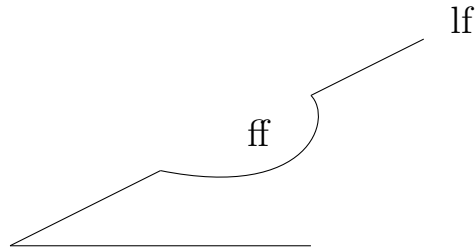

\begin{defn}\label{potential-op}
The space $\Psi^{ k, J_{\lf}}_e(P^2_e)$ of edge Poisson operators of order $k\in \N_0$
is the space of all operators $P$ with Schwartz kernels $K_P$ which are pushforwards 
from polyhomogeneous conormal distributions $\kappa_P$ on the edge Poisson double
space $P^2_e$, with index sets $J_{\of}$ at the original face, and $J_{\ff}= (-1-b+k)+\N_0$ at the front face.
\end{defn}

Comparing with the Boutet de Monvel calculus, one expects that we should also include operators mapping
functions on $\del M$ to functions on $\del M$. Indeed, a complete analogue of that calculus (as in the work 
of Schulze) would indeed include these, but this is not necessary for our purposes here.  Note that the operators
of this type we would need are not of any particularly standard type; their Schwartz kernels on $(\del M)^2$
should be conormal at the fibre diagonal $\mathrm{fdiag}$, rather than the diagonal of the boundary.
These are, in some sense, lifts of pseudodifferential operators from $B^2$ to $(\del M)^2$. 

\subsection{Composition formul\ae}\label{triple}
The key fact which makes the definitions above useful is that these classes of operators are closed
under composition. This statement must be qualified to account for two issues. The first is the trivial
observation that one can only compose operators of the appropriate types, e.g.\ $T \circ A$ is defined
if $A$ is an interior edge operator and $T$ is an edge trace operator, and similarly, $A \circ P$ is defined
if $P$ is an edge Poisson operator and $A$ an interior edge operator, but of course $P \circ T$ is not defined, etc.
More seriously, however, even when composing two interior edge operators, the composition may not be defined because 
of integrability issues. Thus if $A \in \Psi^{*,\calE}_e$ and $A' \in \Psi^{*,\calE'}_e$, then $A \circ A'$ is defined only
if $E_{\rf} + E'_{\lf} > -1$ (this lower bound depends on the choice of reference measure). The full composition 
theorem for interior edge operators is proved in \cite{M}, and we prove here the analogous results 
for compositions involving edge Poisson and trace operators.  The main point in all of this is the more subtle fact 
that if two operators have Schwartz kernels which lift to be polyhomogeneous on the appropriate blown-up space, then the
same is true for the composition.  This can be verified `by hand', breaking up the 
regions of integration into different neighbourhoods and using projective coordinate systems
to check the polyhomogeneity of these localized integrals. There is a much more elegant
and conceptual way, due to Melrose, and employed in \cite{M} (and many
other places), using the `pushforward theorem'. This states that under appropriate conditions on 
a map $f: X \to X'$ between two manifolds with corners, the pushforward of a polyhomogeneous 
distribution is polyhomogeneous.  We review this result now and apply it to state the composition formul\ae. 

First introduce some terminology.  Let $X$ and $X'$ be two compact manifolds with corners, and $f: X \to X'$ 
a smooth map.  Let $\{H_i\}$ and $\{H_j'\}$ be enumerations of the codimension one boundary faces of $X$ and $X'$,
respectively, and let $\rho_i$, $\rho_j'$ be global defining functions for $H_i$, resp.\ $H_j'$. We say that the map $f$
is a $b$-map if 
\[
f^* \rho_i' = A_{ij} \prod_i \rho_j^{e(i,j)}, \quad 0 < A_{ij}  \in \calC^\infty(X),\ e(i,j) \in \mathbb{N} \cup \{0\};
\]
in other words, $f^* \rho_j'$ vanishes to constant order along each boundary face of $X$.  In particular, this
means that if $f(H_i) \cap H_j' \neq \varnothing$, then $f(H_i) \subset H_j'$, and the order of vanishing of $f$ in the
direction normal to $H_i$ is constant along the entire face. 

Next, $f$ is called a $b$-submersion if $f_*$ induces a surjective map between the $b$-tangent bundles of $X$ and $X'$. (The 
$b$-tangent space at a point $p$ of $\del X$ on a codimension  $k$ corner is spanned locally by  the
sections $x_1 \del_{x_1}, \ldots, x_k \del_{x_k}, \del_{y_j}$, where $x_1, \ldots, x_k$ are the defining functions for the faces
meeting at $p$ and the $y_j$ are local coordinates on the corner through $p$.)  Finally, if we require that $f$ is not
only a $b$-submersion, but that in addition, for each $j$ there is at most one $i$ such that $e(i,j) \neq 0$ 
(this condition simply means that each hypersurface face $H_i$ in $X$ gets mapped into \emph{at most one} 
$H_j'$ in $X'$, or in other words, no hypersurface in $X$ gets mapped to a corner in $X'$), then $f$ is called a $b$-fibration. 

Let $\nu_0$ be any smooth density on $X$ which is everywhere nonvanishing and smooth up to all boundary faces of $X$. 
A smooth $b$-density $\nu_b$ is, by definition, any density of the form $\nu_b =\nu_0 (\Pi \rho_i)^{-1}$.  
Fix smooth nonvanishing $b$-densities $\nu_b$ on $X$ and $\nu_b'$ on $X'$. 
\begin{prop}[The Pushforward Theorem (Melrose)]
Let $u$ be a polyhomogeneous function on $X$ with index set $E_j$ at the face $H_j$, for all $j$.  Suppose that if
$e(i,j) = 0$ for all $i$, i.e.\ $H_j$ is mapped to the interior of $X'$, then $\mbox{Re}\, z > 0$ for all $(z,p) \in E_j$. 
In this case, the pushforward $f_* (u \nu_b)$ is well-defined and equals $h \nu_b'$ where $h$ is polyhomogeneous 
on $X'$ and has an index family $f_b(\calE)$ given by an explicit formula in terms of the index family $\calE$ for $X$.
\end{prop}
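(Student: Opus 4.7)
The plan is to reduce the general statement to a handful of elementary pushforward computations by exploiting the local structure of $b$-fibrations, and then to patch together the local polyhomogeneous expansions via a partition of unity on $X'$. Since both the polyhomogeneity of $h$ and the explicit determination of the image index family $f_b(\calE)$ are local statements on $X'$, I may work in a small neighbourhood of any boundary point $p' \in X'$, which after choosing suitable local defining functions puts us in a model orthant.

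First I would establish a factorization lemma: any $b$-fibration $f: X \to X'$ can be written locally, up to a diffeomorphism and multiplication of densities by smooth positive functions, as a composition of maps of only three elementary types: (a) projections off an interior factor $Y \times \RR^k \to Y$; (b) projections off a boundary factor $[0,\infty)_r \times Y \to Y$ which integrate out one defining function; and (c) "collapse" maps $[0,\infty)_{r_1} \times [0,\infty)_{r_2} \times Y \to [0,\infty)_\rho \times Y$ of the form $\rho = r_1^{a_1} r_2^{a_2}$ with positive integer exponents. The $b$-submersion hypothesis is what allows one to arrange that no face of $X$ maps to a corner in $X'$ other than in this controlled way; together with the no-corner-hits condition of a $b$-fibration it reduces the analysis to cases (a)--(c) one variable at a time.

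The core computations are then elementary. For type (a) there is nothing to prove, since the fibre is compact in the relevant directions after localization and one simply integrates the polyhomogeneous expansion. For type (b), the polyhomogeneous function at $r=0$ has an expansion $\sum r^\gamma (\log r)^p a_{\gamma,p}(y)$ with index set $E$, and $\int_0^1 r^\gamma (\log r)^p\,dr/r$ converges exactly when $\Re \gamma > 0$, which is the positivity hypothesis in the theorem; a single antiderivative computation shows that the integral contributes nothing new to the index sets at the other faces of $X$. For type (c), pulling back a defining function $\rho = r_1^{a_1} r_2^{a_2}$ and pushing forward a monomial $r_1^{\alpha}(\log r_1)^p r_2^{\beta}(\log r_2)^q$ is a direct calculation that produces the key extended-union rule:
\[
f_b(E_1, E_2) = E_1/a_1 \,\overline{\cup}\, E_2/a_2,
\]
where the extended union is defined by
\[
F \,\overline{\cup}\, G = F \cup G \cup \{(\gamma, p+q+1) : (\gamma,p) \in F,\ (\gamma,q) \in G\},
\]
the extra logarithm appearing precisely when exponents from the two factors collide. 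Iterating these three cases and composing indices yields the explicit formula for $f_b(\calE)$.

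The main obstacle is the bookkeeping for the extended unions: one must track not only the leading exponents but also all resonances produced by repeated application of case (c), and verify that each boundary face $H_j'$ of $X'$ receives contributions only from the faces $H_i$ with $e(i,j) > 0$ (which is precisely the no-corner-to-corner condition). A secondary technical point is checking uniform conormality of the remainders: at each step one must verify that the error after peeling off finitely many terms of the expansion is itself conormal with the correct improved bound, which follows by differentiating under the integral sign using the $b$-vector fields that lift under $f$ (this is where the $b$-map property, ensuring $f_* \calV_b(X) \subset \calV_b(X')$, is essential). Once these steps are carried out in the elementary cases and composed, the polyhomogeneity of $h$ and the formula for the index family follow.
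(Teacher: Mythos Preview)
The paper does not prove this proposition at all: it is stated as a known result due to Melrose and is simply quoted for use in the composition formul\ae\ that follow. Immediately after the statement the authors write only an informal description of the resulting index sets (the extended-union rule) sufficient for their applications, and give no argument.

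Your proposal, by contrast, is an outline of an actual proof of the Pushforward Theorem. The strategy you describe --- local factorization of a $b$-fibration into elementary projections and collapse maps, explicit computation of the pushforward for each elementary type, and tracking of the extended-union rule for colliding exponents --- is essentially the standard route to this result (as in Melrose's original work and later expositions). The sketch is reasonable at the level of a plan; the genuine work you have not yet done is the factorization lemma itself (this is where the $b$-fibration hypotheses are really used and is not entirely trivial) and the uniform control of remainders when composing several elementary steps. But since the paper treats the theorem as a black box, there is nothing here to compare your argument against.
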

We do not state the formula for the index set of the pushforward in generality, but give an informal description sufficient 
for the present situation.  If $H_{j_1}$ and $H_{j_2}$ are both mapped to a face $H_i'$, and if $H_{j_1} \cap H_{j_2} = \varnothing$, 
then the pushforward has index set  $E_{j_1} + E_{j_2}$ at $H_i'$. If they do intersect, then the contribution is the extended 
union $H_{j_1} \overline{\cup} H_{j_2}$. For any two index sets $E,E'$ their \emph{extended union} $E\overline{\cup}E'$ 
is defined by
\begin{align}
\label{extended}
E \overline{\cup} E' = E \cup E' \cup \{((z, p + q + 1): \exists \, (z,p) \in E,\ 
\mbox{and}\  (z,q) \in E' \}.
\end{align}

After these generalities, we can now state the composition results between interior and Poisson operators and 
between Poisson and trace operators. Note that the composition formula between trace and interior operators 
is the adjoint of the interior-Poisson composition, so we do not state it separately. 

\subsubsection{Interior $\circ$ Poisson} 
Let $G$ be an interior edge operator and $P$ a Poisson edge operator and consider the (only possible) composition $A=G \circ P$.
To show that this is again a Poisson edge operator, we must verify that the Schwartz kernel of this composition 
lifts to be polyhomogeneous on $P^2_e$ and has the stated index sets. This is accomplished by 
constructing the interior-Poisson triple space $M^3_{i-p}$, obtained by a sequence of blowups from $M \times M \times \del M$.
Recall the fibre diagonal $\fdiag$ which is blown up in the definitions of the interior edge and Poisson operators.
Here, using local coordinates $(x,y,z)$, $(x', y', z')$ and $(y'',z'')$ in the three factors, $\fdiag_{g} := \{ x = x'  = 0, 
y = y'\}$ is the fibre diagonal that needs to blown for polyhomogeneity of $G$, $\fdiag_{p} := \{x' = 0, y' = y''\}$
is the fibre diagonal that needs to blown for polyhomogeneity of $P$, and finally $\fdiag_{a} := \{x = 0, y = y''\}$
is the fibre diagonal that needs to blown for polyhomogeneity of  $A$. All the three submanifolds intersect at
$\fdiag_{0} := \{x=x'=x''=0, y=y'=y''\}$. We define the triple space by 
$$
M^3_{i-p}:=[[M \times M \times \del M; \fdiag_{0} ]; \fdiag_{g}, \fdiag_{p}, \fdiag_{a}].
$$
Then there exist natural projections 
\begin{align*}
&\pi_p: M^3_{i-p} \to P^2_e \times M_{(x,y,z)} \to P^2_e, \\
&\pi_a: M^3_{i-p} \to P^2_e \times M_{(x',y',z')} \to P^2_e, \\
&\pi_g: M^3_{i-p} \to M^2_e \times \partial M_{(y'',z'')} \to M^2_e.
\end{align*}
The maps $\pi_*$ are $b$-fibrations by construction of the triple space. 
The triple space is also equipped with the natural blowdown map $\beta_3: M^3_{i-p} \to M \times M \times \del M$.
We also consider the natural blowdown maps $\beta_2: M^2_e \to M^2$ and
$\beta_1: P^2_e \to M \times \partial M$.

The Schwartz kernel $K_G$ of an interior edge operator $G\in \Psi^{-\infty, \gamma, E_{\lf}, E_{\rf}}_e(M^2_e)$ lifts 
to a polyhomogeneous conormal distribution $\kappa_G= \beta_2^* K_G$ on $M^2_e$ of leading order $(-1-b+\gamma)$
at the front face. The Schwartz kernel $K_P$ of an edge Poisson operator  $P\in \Psi^{-\infty, \rho, J_{\rf}}_e(P^2_e)$ lifts 
to a polyhomogeneous conormal distribution $\kappa_P= \beta_1^* K_P$ on $P^2_e$ of leading order $(-1-b+\rho)$
at the front face.  The kernel of the composition $A=G \circ P$ 
can be expressed using pullbacks and pushforwards as 
\[
\kappa_A:=\beta_1^*(K_{A})= (\pi_a)_*[\pi_g^*\kappa_G \cdot \pi_p^*\kappa_P].
\]
Applying the pushforward theorem we obtain the
\begin{thm}\label{triple-ep}
If $\Re E_{\lf} + \Re J_\rf> -1$ then 
\[
\Psi^{-\infty, \gamma, E_{\rf}, E_{\lf}}_e(M^2_e) \circ \Psi^{-\infty, \rho, J_\rf}_e(P^2_e) \subset \Psi^{-\infty, \gamma+\rho, E_{\rf}}_e(P^2_e).
\]
\end{thm}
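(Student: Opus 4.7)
The plan is to follow the strategy outlined just before the theorem statement: realize the composition as a pushforward from the interior-Poisson triple space $M^3_{i-p}$ and then invoke the Pushforward Theorem. All the geometric machinery has been set up, so the proof reduces to (a) verifying the $b$-fibration properties of the three projections $\pi_g, \pi_p, \pi_a$, (b) computing the index family of the integrand on $M^3_{i-p}$, and (c) reading off the result.

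First, I would check carefully that $\pi_g$, $\pi_p$, and $\pi_a$ are $b$-fibrations. Each is naturally defined as a composition of one of the blowdown factors out of $M^3_{i-p}$ to a double-space times a third factor, followed by projection onto the double-space factor. In each case the blowup $\fdiag_0$ resolves the common intersection of the three partial diagonals so that the remaining partial blowups are disjoint, and the three projections map hypersurfaces of $M^3_{i-p}$ into single hypersurfaces of $P^2_e$ or $M^2_e$ respectively. This is the standard verification done in \cite{M} for the interior triple space; the argument here is an adaptation.

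Next, I would pull back the two lifted Schwartz kernels to $M^3_{i-p}$ using $\pi_g^*$ and $\pi_p^*$. The lift $\pi_g^* \kappa_G$ carries index $(-1-b+\gamma)$ at the preimage of the interior front face of $M^2_e$ (which includes both the blowup of $\fdiag_0$ and of $\fdiag_g$) and index sets $E_{\lf}$, $E_{\rf}$ on the preimages of the left and right faces of $M^2_e$; it is smooth and trivial on the faces it does not see. A parallel statement holds for $\pi_p^*\kappa_P$ with leading index $(-1-b+\rho)$ at the Poisson front face and $J_{\rf}$ at the original face. Taking the product multiplies powers and adds the index sets at each hypersurface; at the face which projects to the interior of $P^2_e$, namely the lift of $\{x'=0\}$ seen by neither $\kappa_G$ alone nor $\kappa_P$ alone but by their fiber-integration, the combined order is governed by $E_{\lf} + J_{\rf}$. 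This is exactly where the hypothesis $\Re E_{\lf} + \Re J_{\rf} > -1$ must be used: it guarantees that the Pushforward Theorem applies at the face which $\pi_a$ sends to the interior of $P^2_e$ (after accounting for the reference $b$-density), so that $(\pi_a)_\ast$ of the product is well-defined and polyhomogeneous.

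Finally, I would apply the Pushforward Theorem and compute the resulting index sets on $P^2_e$. The front face of $P^2_e$ receives the contribution of the $\fdiag_0$, $\fdiag_g$, $\fdiag_p$ and $\fdiag_a$ faces of the triple space via $\pi_a$; a careful bookkeeping, using the extended union $\overline{\cup}$ of \eqref{extended} whenever two contributing faces of $M^3_{i-p}$ meet, gives leading order $(-1-b+\gamma+\rho)$ with the expected $\mathbb{N}_0$ index set. The original face of $P^2_e$ receives only the contribution of the face that lifts the original face of $M^2_e$ on the left, yielding the index set $E_{\rf}$. The main obstacle, and the main bookkeeping burden, is precisely this index-set computation: one has to be careful both that every face of $M^3_{i-p}$ which is mapped by $\pi_a$ into the interior of $P^2_e$ is integrated against a positive real part (this is where the integrability hypothesis is consumed), and that no unexpected logarithmic accumulations occur at corners where two or more contributing faces of $M^3_{i-p}$ meet. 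Once these are verified, the conclusion $A = G \circ P \in \Psi^{-\infty,\gamma+\rho,E_{\rf}}_e(P^2_e)$ follows directly from the Pushforward Theorem.
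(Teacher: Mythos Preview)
Your proposal follows essentially the same route as the paper: realize the kernel of the composition as $(\pi_a)_*[\pi_g^*\kappa_G \cdot \pi_p^*\kappa_P]$ on the triple space $M^3_{i-p}$ and invoke the Pushforward Theorem. The paper's proof takes the $b$-fibration properties of $\pi_g,\pi_p,\pi_a$ as already established by the construction and devotes itself entirely to the bookkeeping step you label (b)--(c): it writes down the explicit pullback relations $\pi_g^*\rho_\ff = \rho_0\rho_g$, $\pi_p^*\rho_\ff = \rho_0\rho_p$, $\pi_a^*\rho_\ff = \rho_0\rho_a$, etc., computes how the reference densities $\beta_3^*(dx\,dy\,dz\,dx'\,dy'\,dz'\,dy''\,dz'')$ and $\beta_1^*(dx\,dy\,dz\,dy''\,dz'')$ lift as multiples of $b$-densities, and then reads off the leading orders at $\ff$ and $\rf$ of $P^2_e$ directly from the resulting exponents. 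Your worry about extended unions and extra logarithms is reasonable in principle, but the paper sidesteps it by tracking only leading orders and the integer index set at the front face; the hypothesis $\Re E_{\lf}+\Re J_{\rf}>-1$ enters exactly where you say, to ensure integrability at the face $\rho_l$ which $\pi_a$ maps to the interior.
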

\begin{proof}
In view of the pushforward theorem, it remains to identify the leading order behaviour 
at the various boundary faces. Denote the boundary defining functions of the boundary faces introduced by blowing up 
$\fdiag_*$ by $\rho_*$, where $*\in \{0, g,p,a\}$. We write $\rho_\ff$ for the front 
face defining functions in the double spaces $M^2_e$ and $P^2_e$. The defining functions of the boundary faces 
$\{x=0\}$ and $\{x'=0\}$ in $M^3_{i-p}$ are denoted by $\rho_r$ and $\rho_l$, respectively.
The defining functions of the boundary faces 
$\{x=0\}$ and $\{x'=0\}$ in either $M^2_e$ or $P^2_e$ are denoted by $\rho_\rf$ and $\rho_\lf$, respectively.
We then obtain
\begin{align*}
&\pi_g^*(\rho_\ff) = \rho_0 \rho_g, \ \pi_g^*(\rho_{\rf, \lf}) = \rho_{r,l}, \\
&\pi_p^*(\rho_\ff) = \rho_0 \rho_p, \ \pi_g^*(\rho_{\lf}) = \rho_{l}, \\
&\pi_a^*(\rho_\ff) = \rho_0 \rho_a, \ \pi_g^*(\rho_{\rf}) = \rho_{r}. \\
\end{align*}
We denote by $\nu_3$ a $b$-volume on $M^3_{i-p}$ and by $\nu_1$ a $b$-volume on $P^2_e$.
We compute
\begin{align*}
&\beta_3^*(dx\, dy\, dz\, dx'\, dy'\, dz'\, dy''\, dz'') = \rho_0^{2+2b} (\rho_g \rho_p \rho_r \rho_l)^{1+b} \nu_3, \\
&\beta_1^*(dx\, dy\, dz\, dy''\, dz'') = \rho_\ff^{1+b} \rho_\rf^{1+b} \nu_1.
\end{align*}
The leading order behaviour of $\kappa_A$ at the various boundary faces of $P^2_e$ follows
now from the following computation
\begin{align*}
&\kappa_A \cdot \beta_1^*(dx\, dy\, dz\, dy''\, dz'') = 
\beta_1^*(K_{A} dx\, dy\, dz\, dy''\, dz'') \\ &= 
(\pi_a)_*[\pi_g^*\kappa_G \cdot \pi_p^*\kappa_P \cdot \beta_3^*(dx\, dy\, dz\, dx'\, dy'\, dz'\, dy''\, dz'')] \\
&= (\pi_a)_*[ \rho_0^{\gamma+\rho} \rho_g^{\gamma} \rho_p^{\rho} \rho_r^{1+b+E_{\rf}} \rho_l^{1+b+E_{\lf}+J_\rf} \nu_3] \\
&=\rho_\ff^{\gamma+\rho} \rho_\rf^{1+b+E_{\rf}} \nu_1 = \rho_\ff^{-1-b+ \gamma+\rho} \rho_\rf^{E_{\rf}} \beta_1^*(dx\, dy\, dz\, dy''\, dz''). 
\end{align*}
This proves the statement.
\end{proof}

\subsubsection{Poisson  $\circ$  trace}
Let $P$ be a Poisson and $T$ a trace edge operator, and consider the (only possible) composition $G=P \circ T$.
To show that this is again an interior edge operator, we must verify that the Schwartz kernel of this composition 
lifts to be polyhomogeneous conormal on $M^2_e$ and has the stated index sets. This is accomplished by 
constructing the Poisson-trace triple space $M^3_{p-t}$, obtained by a sequence of blowups from $M \times \del M \times M$.
Recall the fibre diagonal $\fdiag$ which is blown up in the definitions of the Poisson and the trace edge operators.
Here, using local coordinates $(x,y,z)$, $(y', z')$ and $(x'', y'',z'')$ in the three factors, $\fdiag_{p} := \{ x  = 0, 
y = y'\}$ is the fibre diagonal that needs to blown for polyhomogeneity of $P$, $\fdiag_{t} := \{x'' = 0, y' = y''\}$
is the fibre diagonal that needs to blown for polyhomogeneity of $T$, and finally $\fdiag_{g} := \{x = x''=0, y = y''\}$
is the fibre diagonal that needs to blown for polyhomogeneity of  $G$. All the three submanifolds intersect at
$\fdiag_{0} := \{x=x''=0, y=y'=y''\}$. We define the triple space by 
$$
M^3_{p-t}:=[[M\times \del M  \times M ; \fdiag_{0} ]; \fdiag_{g}, \fdiag_{p}, \fdiag_{t}].
$$
Then there exist natural projections 
\begin{align*}
&\pi_p: M^3_{i-p} \to P^2_e \times M_{(x'',y'',z'')} \to P^2_e, \\
&\pi_t: M^3_{i-p} \to T^2_e \times M_{(x,y,z)} \to T^2_e, \\
&\pi_g: M^3_{i-p} \to M^2_e \times \partial M_{(y',z')} \to M^2_e.
\end{align*}
The maps $\pi_*$ are $b$-fibrations by construction of the triple space. 
The triple space is also equipped with the natural blowdown map $\beta_3: M^3_{p-t} \to M\times \del M  \times M$.
We also consider the natural blowdown maps $\beta_g: M^2_e \to M^2$,
$\beta_p: P^2_e \to M \times \partial M$ and $\beta_t: T^2_e \to \partial M \times M$.

The Schwartz kernel $K_P$ of an edge Poisson operator $P\in \Psi^{-\infty, \rho, J_\rf}_e(P^2_e)$ lifts 
to a polyhomogeneous conormal distribution $\kappa_P= \beta_p^* K_P$ on $P^2_e$ of leading order $(-1- b+\rho)$
at the front face. The Schwartz kernel $K_T$ of an edge trace operator  $T\in \Psi^{-\infty, \tau, F_{\lf}}_e(T^2_e)$ lifts 
to a polyhomogeneous conormal distribution $\kappa_T= \beta_t^* K_T$ on $T^2_e$ of leading order $(-1-b+\tau)$
at the front face.  The kernel of the composition $G=P \circ T$ 
can be expressed using pullbacks and pushforwards as 
\[
\kappa_G:=\beta_2^*(K_G)= (\pi_g)_*[\pi_p^*\kappa_P \cdot \pi_t^*\kappa_T].
\]
Applying the pushforward theorem we obtain the
\begin{thm}\label{triple-pt}
\[
\Psi^{-\infty, \rho, J_{\rf}}_e(P^2_e) \circ \Psi^{-\infty, \tau , F_{\lf}}_e(T^2_e) \subset \Psi^{-\infty, \rho+\tau , J_{\rf},F_{\lf}}_e(M^2_e).
\]
\end{thm}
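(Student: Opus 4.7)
The strategy is to mimic the proof of Theorem~\ref{triple-ep}, using the triple space $M^3_{p-t}$ constructed above together with Melrose's pushforward theorem. The kernel of $G=P\circ T$ is given by the fibre integral
\[
\kappa_G = (\pi_g)_*\bigl[\pi_p^*\kappa_P\cdot\pi_t^*\kappa_T\bigr],
\]
and the task is to show this pushforward is polyhomogeneous on $M^2_e$ with leading order $-1-b+\rho+\tau$ at $\ff$, index set $J_\rf$ at $\rf$, and $F_\lf$ at $\lf$.

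First I would observe that, by the construction of $M^3_{p-t}$, each of $\pi_p,\pi_t,\pi_g$ is a $b$-fibration and not merely a $b$-submersion; this is exactly why one first blows up the triple intersection $\fdiag_0$ before the individual fibre diagonals $\fdiag_g,\fdiag_p,\fdiag_t$. Let $\rho_0,\rho_g,\rho_p,\rho_t$ denote defining functions for the faces introduced by the four blowups, and $\rho_r,\rho_l$ the lifts of $\{x=0\}$ and $\{x''=0\}$ in $M\times\del M\times M$. The pullback identities mirror those of Theorem~\ref{triple-ep}:
\begin{align*}
&\pi_p^*\rho_\ff = \rho_0\rho_p, \quad \pi_p^*\rho_{\of} = \rho_r,\\
&\pi_t^*\rho_\ff = \rho_0\rho_t, \quad \pi_t^*\rho_{\of} = \rho_l,\\
&\pi_g^*\rho_\ff = \rho_0\rho_g, \quad \pi_g^*\rho_\lf = \rho_l, \quad \pi_g^*\rho_\rf = \rho_r.
\end{align*}

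The main computation is then the direct analogue of the displayed calculation in the proof of Theorem~\ref{triple-ep}: multiplying $\pi_p^*\kappa_P\cdot\pi_t^*\kappa_T$ by $\beta_3^*(dx\,dy\,dz\,dy'\,dz'\,dx''\,dy''\,dz'')$, expressing the result as a product of powers of the various $\rho_*$ times a reference $b$-volume $\nu_3$ on $M^3_{p-t}$, pushing forward via $\pi_g$, and comparing to $\beta_2^*(dx\,dy\,dz\,dx''\,dy''\,dz'')$ produces an expression of the form $\rho_\ff^{-1-b+\rho+\tau}\rho_\rf^{J_\rf}\rho_\lf^{F_\lf}$, which is the claim. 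The key bookkeeping is that $\rho_0$ absorbs the combined order $\rho+\tau$ from both front-face contributions, while $\rho_g$ enters with order $0$ (it is seen by neither factor) and so contributes no asymptotic term.

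I expect the main obstacle to lie in two structural points rather than in any new analytic estimate. First, one must confirm that no two distinct boundary hypersurfaces of $M^3_{p-t}$ are mapped by $\pi_g$ to a common face of $M^2_e$, so that no extended union in the sense of~\eqref{extended} enters the final index family; this is exactly the $b$-fibration property secured by the preliminary blowup of $\fdiag_0$. Second, in contrast with Theorem~\ref{triple-ep}, no integrability hypothesis on the index sets is imposed in the present statement, and so one must verify that the pushforward theorem genuinely applies unconditionally. This comes down to the observation that the only fibre integrations performed by $\pi_g$ which could plausibly diverge are along the blown-up faces $\rho_0,\rho_p,\rho_t$, all of which acquire strictly positive real part once the Jacobian powers $\rho_*^{1+b}$ coming from $\beta_3^*(\cdot)$ have been folded in; the remaining integration along the middle $\del M$-factor takes place over a compact boundaryless fibre and therefore imposes no additional constraint.
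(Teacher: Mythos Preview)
Your proposal is correct and follows essentially the same route as the paper: the same triple space $M^3_{p-t}$, the same pullback identities for the boundary defining functions, and the same density bookkeeping via Melrose's pushforward theorem. Your closing paragraph, explaining why no integrability hypothesis analogous to $\Re E_{\lf}+\Re J_\rf>-1$ in Theorem~\ref{triple-ep} is required here---because the middle factor is the closed manifold $\del M$ rather than a copy of $M$---is a useful clarification that the paper itself leaves implicit.
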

\begin{proof}
In view of the pushforward theorem, it remains to identify the leading order behaviour 
at the various boundary faces. Denote the boundary defining functions of the boundary faces introduced by blowing up 
$\fdiag_*$ by $\rho_*$, where $*\in \{0, g,p,t\}$. We write $\rho_\ff$ for the front 
face defining functions in the double spaces $M^2_e$ and $P^2_e, T^2_e$. The defining functions of the boundary faces 
$\{x=0\}$ and $\{x'=0\}$ in $M^3_{i-p}$ are denoted by $\rho_r$ and $\rho_l$, respectively.
The defining functions of the boundary faces 
$\{x=0\}$ and $\{x'=0\}$ in either $M^2_e$ or $P^2_e, T^2_e$ are denoted by $\rho_\rf$ and $\rho_\lf$, respectively.
We then obtain
\begin{align*}
&\pi_p^*(\rho_\ff) = \rho_0 \rho_p, \ \pi_p^*(\rho_{\rf}) = \rho_{r}, \\
&\pi_t^*(\rho_\ff) = \rho_0 \rho_t, \ \pi_t^*(\rho_{\lf}) = \rho_{l}, \\
&\pi_g^*(\rho_\ff) = \rho_0 \rho_g, \ \pi_g^*(\rho_{\rf,\lf}) = \rho_{r,l}. \\
\end{align*}
We denote by $\nu_3$ a $b$-volume on $M^3_{i-p}$ and by $\nu_2$ a $b$-volume on $M^2_e$.
We compute
\begin{align*}
&\beta_3^*(dx\, dy\, dz\, dx'\, dy'\, dx'\, dy''\, dz'') = \rho_0^{2+2b} (\rho_p \rho_t \rho_r \rho_l)^{1+b} \nu_3, \\
&\beta_g^*(dx\, dy\, dz\, dx''\, dy''\, dz'') = \rho_\ff^{1+b} \rho_\rf^{1+b} \rho_\lf^{1+b} \nu_2.
\end{align*}
The leading order behaviour of $\kappa_G$ at the various boundary faces of $M^2_e$ follows
now from the following computation
\begin{align*}
&\kappa_G \cdot  \beta_g^*(dx\, dy\, dz\, dx''\, dy''\, dz'') =
\beta_g^*(K_G dx\, dy\, dz\, dx''\, dy''\, dz'') \\ &= 
(\pi_g)_*[\pi_p^*\kappa_P \cdot \pi_t^*\kappa_T \cdot \beta_3^*(dx\, dy\, dz\, dy'\, dz'\, dx''\, dy''\, dz'')] \\
&= (\pi_g)_*[ \rho_0^{\rho+\tau} \rho_p^{\rho} \rho_t^{\tau} \rho_r^{1+b+ J_{\rf}} \rho_l^{1+b+F_{\lf}} \nu_3] \\
&=\rho_\ff^{\rho+\tau } \rho_\rf^{1+b+J_{\rf}} \rho_\lf^{1+b+F_{\rf}} \nu_2 = \rho_\ff^{-1-b+\rho+\tau} 
\rho_\rf^{J_{\rf}} \rho_\lf^{F_{\rf}} \beta_g^*(dx\, dy\, dz\, dx''\, dy''\, dz''). 
\end{align*}
This proves the statement.
\end{proof}

\section{Representable subclass of edge, trace and Poisson operators}\label{representable}
Within the more general classes of residual edge, Poisson and trace operators there are subclasses of operators
for which  the restriction to the front face has a particular representation formula. We call these the subclasses
of representable operators.  We introduce these now, and then show how the composition formul\ae\ specialize
in this setting, proving in particular that the composition of representable operators is again representable.

\subsection{Representable residual edge operators}\label{edge-op}
We may consider $\RR^+ \times F$ as a manifold with boundary with a trivial edge structure, where the base $B$ reduces
to a single point. The corresponding edge double-space thus corresponds to the somewhat simpler $b$-double
space, from \cite{Mel-APS}, \cite{M}, and is denoted $(\RR^+ \times F)^2_b$. This is a manifold with 
corners, with three boundary faces, the left, right and the front face.  Let $G_0(\wy, \he) \in \calA_{\phg}^{\calE'}((\RR^+\times F)^2_b)$,
where $\calE' = (E_{\ff}=\N_0, \calE = (E_{\lf}, E_{\rf}))$, and the lf, rf index sets are constant (at least in the critical range) when varying 
in smooth parameters $(\wy, \he) \in S^*B$. 

\begin{defn}\label{edge-Bessel}
Let $G_0(t,z,\wt,\wz;\widetilde{y}, \widehat{\eta}) \in \calA_{\phg}^{\calE'}$ as above.  Here $(\widetilde{y}, \widehat{\eta}) \in
S^*B$ are smooth parameters. We say that $G_0$ is edge Bessel operator, $G_0\in \Psi^{-\infty,\calE'}_b((\RR^+\times F)^2)$, 
if it satisfies the following two conditions:
\begin{enumerate}
\item $G_0(t,z,\wt,\wz;\widetilde{y}, \widehat{\eta})$ decreases rapidly as $t\to \infty$, locally uniformly in $(z,\wt,\wz)$, 
and as $\wt\to \infty$, locally uniformly in $(t,z,\wz)$; 
\item $G_0(t,z,\wt,\wz;\widetilde{y}, \widehat{\eta})$ admits a polyhomogeneous expansion as $t\to 0$, where the 
coefficient functions decrease rapidly as $\wt\to \infty$, uniformly in the other coordinates, and vice versa.
\end{enumerate}
\end{defn}

Following \cite[(5.18)]{M},  if $G_0$ is a edge Bessel operator and $k\in \N_0$, set
\begin{align}\label{N-edge}
N_k(G_0)= \int_{\R^b} e^{iY\eta} G_0(s|\eta|,z,  |\eta|,\wz; \wy, \widehat{\eta}) 
|\eta|^{-k+1} \dbar \eta.
\end{align}
The proof of \cite[Prop. 5.19]{M} shows that $N_k(G_0)$ is polyhomogeneous on the
front face $\ff$ of the edge double space $(\RR^+ \times \RR^b \times F)^2_e$. 

It will be convenient below to use the homogeneity rescaling 
\begin{align}
\kappa_\lambda u(x, \cdot ):=u(\lambda x, \cdot), \ x\in \R^+.
\end{align}
Consider a residual edge operator $\textup{Op}(G_0)\in \Psi^{-\infty, k, \calE}_e(M^2_e)$. 
By definition this acts on test functions $u$ supported near $\del M$ by 
\begin{multline*}
\left[\textup{Op}(G_0) u\right](x,y,z) \\ =\int e^{i(y-\widetilde{y})\eta} \kappa_{|\eta|}\circ 
G_0(x,z,\wx,\wz; y, \widehat{\eta}) \circ \kappa_{|\eta|}^{-1} u(\wx, \wy,\wz) 
|\eta|^{-k} \, \dbar \eta \, d\wx \, d\wy \, d\wz. 
\end{multline*}
The Schwartz kernel is thus 
\begin{equation}
\begin{split}
K_{\mathrm{Op}(G_0)}&(x,y,z, \wx, \wy, \wz) = \int_{\R^b} e^{i(y-\widetilde{y})\eta} 
G_0(x|\eta|,z,\wx |\eta|,\wz; y, \widehat{\eta}) |\eta|^{-k+1} \, \dbar \eta \\
& = \wx^{-1-b+k}  \int_{\R^b} e^{iY\eta} G_0(s|\eta|,z,  |\eta|,\wz; \wy+\wx Y, 
\widehat{\eta}) |\eta|^{-k+1} \, \dbar \eta \\
& =  \wx^{-1-b+k} N_k(G_0) + \mathcal{O}(\wx^{-b+k} ) .
\end{split}
\label{kernel-edge}
\end{equation}

The representable subcalculus of residual edge operators consists of those operators $G\in \Psi^{-\infty, k, \calE}_e(M^2_e)$, 
whose normal operator $N(G)$, defined as the restriction of $\rho_\ff^{1+b-k}\kappa_G$ to $\ff$, is given by 
$N_k(G_0)$ for some $G_0\in \Psi^{-\infty, \calE'}_b (\RR^+\times F)$.

Recall that if $L$ is an elliptic edge operator, then for any nonindicial weight $\delta \in (\ld, \ud)$,
there is a generalized inverse $G$ and projectors $P_1$ and $P_2$ onto the nullspace and cokernel.
By \cite[(4.22)]{M}, the lift of the Schwartz kernel of $P_1$ to $M^2_e$ is polyhomogeneous with index set 
\begin{equation}
\label{P-index}
\begin{split}
E_{\lf} &=\{(\zeta,p)\in \textup{Spec}_b(L) \mid \Im \zeta >\delta -1/2\}, \\
E_{\rf} &= \{(\zeta,p) \in \C\times \N_0 \mid (\zeta +2\delta, p)\in E_{\lf}\}, \
E_{\ff} = \N_0.
\end{split}
\end{equation} 
Furthermore, its normal operator $N(P_1)$ equals $N_0(P_{01})$  
where $P_{01}\in \Psi^{-\infty, \calE}_b((\RR^+\times F)^2)$ is the
projector onto the nullspace for the Bessel operator $B(L)$. Similarly, the lift of the Schwartz kernel of 
$P_2$ to $M^2_e$ is polyhomogeneous with index set 
\begin{equation}
\begin{split}
F_{\rf} &=\{(\zeta,p) \in \C\times \N_0 \mid (-\zeta-2\delta -1, p) \in \textup{Spec}_b(L), 
\Im \zeta >-\delta -1/2\}, \\
F_{\lf} &= \{(\zeta,p) \in \C\times \N_0 \mid (\zeta - 2\delta, p)\in F_{\rf}\}, \ F_{\ff} = \N_0,
\end{split}
\end{equation} 
and has normal operator $N(P_2) = N_0(P_{02})$, where $P_{02}\in \Psi^{-\infty, 
\mathcal{F}}_b((\R^+\times F)^2)$, $\calF=(F_{\ff}, F_{\lf},F_{\rf})$.  Note that 
if $\delta > \ud$ then $P_{01} = 0$ while if $\delta < \ld$ then $P_{02} = 0$.  
Finally, the lift of the Schwartz kernel of $G$ is polyhomogeneous on $M^2_e$ with index set 
\[
H_{\rf} =E_{\rf}\overline{\cup}F_{\rf}, \
H_{\lf} =E_{\lf}\overline{\cup}F_{\lf}, \
H_{\ff} = \N_0,
\]
This has normal operator $N(G) = N_0(G_0)$ for $G_0\in \Psi^{-\infty, \mathcal{H}}_b((\R^+\times F)^2)$, 
$\mathcal{H}=(H_{\ff},H_{\lf},H_{\rf})$.

\subsection{Representable trace operators}

We next introduce the Bessel trace kernels.

\begin{defn}\label{trace-Bessel}
Let $T_0(\wt,z,\wz;\widetilde{y}, \widehat{\eta})$ be polyhomogeneous on $F^2\times \RR^+$, 
smooth in the interior, and varying smoothly in $(\widetilde{y}, \widehat{\eta})\in S^*B$, with 
index sets $\mathcal{F}=(F_{\ff},F_{\rf})$ constant (at least in the critical range) when varying 
in $(\wy, \he)$. Then $T_0$ is called a trace Bessel kernel, $T_0\in \Psi^{-\infty, \calF}_b(F^2\times \R^+)$, 
if it satisfies:
\begin{enumerate}
\item $T_0(\wt,z,\wz;\widetilde{y}, \widehat{\eta})$ is rapidly decreasing as $\wt\to \infty$, locally uniformly in $(z,\wz)$;
\item $T_0(\wt,z,\wz;\widetilde{y}, \widehat{\eta})$ admits a polyhomogeneous 
expansion as $\wt\to 0$, uniformly in the other variables.
\end{enumerate}
\end{defn}

The class of representable trace operators consists of those operators $T\in \Psi^{-\infty, k, F_{\rf}}_e(T_e)$,
whose normal operator $N(T)$, defined as the restriction of $\rho_{\ff}^{1+b-k}\kappa_T$ to $\ff$, is given by
\begin{align}\label{N-trace}
N_k(T_0) := \int_{\R^b} e^{iY\eta} T_0(|\eta|, z,\wz; \wy, \widehat{\eta}) |\eta|^{-k+1} \, \dbar \eta,
\end{align}
for some $T_0\in \Psi^{-\infty, \calF}_b(F^2\times \R^+)$. An example is a trace operator 
$\textup{Op}(T_0)\in \Psi^{-\infty, k, F_{\rf}}_e(T_e)$, defined on test functions $u$ supported near $\del M$ by 
\begin{align*}
\left[\textup{Op}(T_0) u \right] (y,z):=\int e^{i(y-\widetilde{y})\eta} 
T_0(\wx,z,\wz; y, \widehat{\eta}) \circ \kappa_{|\eta|}^{-1} u(\wx, \wy,\wz) 
|\eta|^{-k} \dbar \eta \, d\wx \, d\wy \, d\wz.
\end{align*}
and extended trivially away from the singular neighborhood. The corresponding operator kernel is given in local coordinates by
\begin{equation}
\begin{split}
K_{\textup{Op}(T_0)}(y,z, \wx, \wy, \wz) &= 
\int_{\R^b} e^{i(y-\widetilde{y})\eta} T_0(\wx|\eta|,z,\wz; y, \widehat{\eta}) |\eta|^{-k+1} \dbar \eta \\
&= \wx^{-1-b+k}  \int_{\R^b} e^{iY\eta} T_0(|\eta|,z, \wz; \wy+\wx Y, \widehat{\eta}) |\eta|^{-k+1} \, \dbar \eta \\
&=  \wx^{-1-b+k} N_k(T_0) + \mathcal{O}(\wx^{-b+k}).
\end{split}
\label{kernel-trace}
\end{equation}

\subsection{Representable Poisson operators}
Finally, we introduce the Bessel Poisson kernels.
\begin{defn}\label{potential-Bessel}
Let $P_0(t,z,\wz;\widetilde{y}, \widehat{\eta})$ be polyhomogeneous on $\R^+\times F^2$ with index set 
$\mathcal{J}=(J_{\lf}, J_{\ff})$, parametrized and varying smoothly in $(\widetilde{y}, \widehat{\eta})\in S^*B$. 
Then $P_0$ is called a Bessel Poisson operator, $P_0\in \Psi^{-\infty, \mathcal{J}}_b(\R^+\times F^2)$, if: 
\begin{enumerate}
\item $P_0(t,z,\wz;\widetilde{y}, \widehat{\eta})$ is rapidly decreasing as $t\to \infty$, 
locally uniformly in $(z,\wz)$;
\item $P_0(t,z,\wz;\widetilde{y}, \widehat{\eta})$ admits a polyhomogeneous expansion as $t\to 0$, 
uniformly in the other variables.
\end{enumerate}
\end{defn}

The representable Poisson operators are operators $P\in \Psi^{-\infty, k, J_{\lf}}_e(P_e)$ with leading coefficient 
at the front face, the normal operator $N(P)$, given by 
\begin{align}\label{N-potential}
N_k(P_0) := \int_{\R^b} e^{iY\eta} P_0(|\eta|, z,\wz; \wy, \widehat{\eta}) |\eta|^{-k+1} \, \dbar \eta
\end{align}
for some $P_0\in \Psi^{-\infty, J_{\lf}, J'_{\ff}}_b(\R^+ \times F)$. If $\textup{Op}(P_0)\in \Psi^{-\infty, k, J_{\lf}}_e(P_e)$
is defined near $\partial M$ by 
\begin{align*}
\left[\textup{Op}(P_0) u \right] (x,y,z):=\int e^{i(y-\widetilde{y})\eta} \kappa_{|\eta|} 
\circ P_0(x,z,\wz; y, \widehat{\eta}) u(\wy,\wz) |\eta|^{-k+1} \, \dbar \eta \, d\wy \, d\wz, 
\end{align*}
then the Schwartz kernel is given locally by 
\begin{equation}
\begin{split}
K_{\textup{Op}(P_0)}(x, y, z, \wy, \wz) &= 
\int_{\R^b} e^{i(y-\widetilde{y})\eta} P_0(x|\eta|,z,\wz; y, \widehat{\eta}) |\eta|^{-k+1} \dbar \eta \\
&= x^{-1-b+k}  \int_{\R^b} e^{iY\eta} P_0(|\eta|, z, \wz; \wy+xY, \widehat{\eta}) |\eta|^{-k+1} \dbar \eta, \\
&= x^{-1-b+k} N_k(P_0) + \mathcal{O}(x^{-b+k}).
\end{split}
\label{kernel-potential}
\end{equation}

\subsection{Composition of representable operators}
We conclude this section by proving that the property of being representable is closed under composition.

\smallskip

\noindent{\bf Residual \ $\circ$\ Poisson: } 
Let $\textup{Op}(G_0)$ and $\textup{Op}(P_0)$ be a residual edge and an edge Poisson operator associated to the 
Bessel operator $G_0$ Bessel Poisson operator $P_0$, respectively.  Using \eqref{kernel-edge} and \eqref{kernel-potential},
the composition $\textup{Op}(G_0) \circ \textup{Op}(P_0)$ is given by 
\begin{equation}
\begin{split}
K_{\textup{Op}(G_0) \circ \textup{Op}(P_0)}&(x,y,z, \wy, \wz)  \\
=\int\int &e^{i(y-y')\eta} G_0(x|\eta|,z,\wx |\eta|, z'; y, \widehat{\eta}) |\eta|^{-g+1} \, \dbar \eta \\
&e^{i(y'-\wy)\eta'} P_0(\wx|\eta'|,z',\wz; y', \he') |\eta'|^{-p+1} \, \dbar \eta' \, d\wx \, dy' \, dz' \\
=\int\int &e^{i(Y-Y')\eta} G_0(|\eta|,z, t |\eta|, z'; \wy+xY, \widehat{\eta}) x^{-1-b+g}|\eta|^{-g+1} \, \dbar \eta \\
&e^{iY'\eta'} P_0(t|\eta'|,z',\wz; \wy+xY', \he') x^{p} |\eta'|^{-p+1} \, \dbar \eta' \, dt \, dY' \, dz',
\end{split}
\end{equation}
where we have substituted
\begin{align}
Y=\frac{y-\wy}{x}, \ Y'=\frac{y'-\wy}{x}, \ t=\frac{\wx}{x}.
\end{align}
Replacing $Y'$ by $(-Y')$ we find for the leading $x^{-1-b+(p+g)}$ coefficient
\begin{equation}
\label{edge-potential}
\begin{split}
N(\textup{Op}& (G_0) \circ \textup{Op}(P_0)) = \int e^{iY\eta}G_0(|\eta|,z, t |\eta|, z'; \wy, \widehat{\eta}) |\eta|^{-g+1}   \\
& \times \int e^{i(\eta-\eta')Y'} P_0(t|\eta'|,z',\wz; \wy, \he') |\eta'|^{-p+1} \, \dbar \eta'\, dY' \, \dbar \eta \, dt\, dz'  \\ 
&=\int e^{iY\eta} (G_0\circ P_0)(|\eta|, z, \wz; \wy, \widehat{\eta})|\eta|^{-p-g+1} \, \dbar \eta = N_g(G_0)\circ N_p(P_0).
\end{split}
\end{equation}
This proves that the normal operator of this composition is representable and has the form \eqref{N-potential}. 

\smallskip
\noindent{\bf Poisson\ $\circ$\ trace:} 
Now consider a Poisson operator $\textup{Op}(P_0)$ associated to the Bessel Poisson kernel $P_0$ and a trace operator 
$\textup{Op}(T_0)$ associated to the Bessel trace kernel $T_0$. The composition in \eqref{kernel-potential} and 
\eqref{kernel-trace} takes the form 
\begin{equation}
\begin{split}
K_{\textup{Op}(P_0) \circ \textup{Op}(T_0)}&(x,y,z, \wx, \wy, \wz)  \\
=\int\int &e^{i(y-y')\eta} P_0(x|\eta|,z, z'; y, \widehat{\eta}) |\eta|^{-p+1} \, \dbar \eta \\
&e^{i(y'-\wy)\eta'} T_0(\wx|\eta'|,z',\wz; y', \he') |\eta'|^{-\tau+1} \, \dbar \eta' \, dy' \, dz' \\
=\int\int &e^{i(Y-Y')\eta} P_0(s|\eta|,z, z'; \wy+\wx Y, \widehat{\eta}) x^{-1-b+p}|\eta|^{-p+1} \, \dbar \eta \\
&e^{iY'\eta'} T_0(|\eta'|,z',\wz; \wy+\wx Y', \he') x^{\tau} |\eta'|^{-\tau+1} \, \dbar \eta' \, dY' \, dz',
\end{split}
\end{equation}
where
\begin{align}
Y=\frac{y-\wy}{\wx}, \ Y'=\frac{y'-\wy}{\wx}, \ s=\frac{x}{\wx}.
\end{align}

As before, substituting $Y'$ by $(-Y')$, we obtain 
\begin{equation}
\label{potential-trace}
\begin{split}
N(\textup{Op}& (P_0) \circ \textup{Op}(T_0)) =\int e^{iY\eta}P_0(s|\eta|,z, z'; \wy, \widehat{\eta}) |\eta|^{-p+1} \\
&\times \int e^{i(\eta-\eta')Y'} T_0(|\eta'|,z',\wz; \wy, \he') |\eta'|^{-\tau+1} \, \dbar \eta'\, dY' \, \dbar \eta \, dz' \\ 
&=\int e^{iY\eta} (P_0\circ T_0)(s|\eta|, z, |\eta|, \wz; \wy, \widehat{\eta})|\eta|^{-p-\tau+1} \, \dbar \eta = N_p(P_0)\circ N_{\tau}(T_0),
\end{split}
\end{equation}
so this composition is again representable. 

\smallskip

\noindent{\bf Trace\ $\circ$\ Poisson:}
Finally, if $\textup{Op}(T_0)$ is a trace operator associated to the Bessel trace kernel $T_0$ and $\textup{Op}(P_0)$ 
is a Poisson operator associated to the Bessel Poisson kernel $P_0$, then \eqref{kernel-trace} and \eqref{kernel-potential} 
becomes 
\begin{equation}
\begin{split}
K_{\textup{Op}(T_0) \circ \textup{Op}(P_0)}&(y,z, \wy, \wz)  \\
=\int\int &e^{i(y-y')\eta} T_0(x|\eta|,z, z'; y, \widehat{\eta}) |\eta|^{-\tau+1} \, \dbar \eta \\
&e^{i(y'-\wy)\eta'} P_0(x|\eta'|,z',\wz; y', \he') |\eta'|^{-p+1} \, \dbar \eta' \, dx\, dy' \, dz' \\
=\int\int &e^{i(Y-Y')\eta} T_0(t|\eta|,z, z'; \wy+r Y, \widehat{\eta}) r^{-1-b+\tau}|\eta|^{-\tau+1} \, \dbar \eta \\
&e^{iY'\eta'} P_0(t|\eta'|,z',\wz; \wy+r Y', \he') r^{p} |\eta'|^{-p+1} \, \dbar \eta' \, dt\, dY' \, dz',
\end{split}
\end{equation}
where 
\begin{align}
Y=\frac{y-\wy}{r}, \ Y'=\frac{y'-\wy}{r}, \ t=\frac{x}{r}.
\end{align}
Substituting $Y'$ by $(-Y')$, and taking the limit $r\to 0$, we obtain the principal symbol of a 
pseudodifferential operator on the closed manifold $B$ acting on sections of the trace bundle: 
\begin{equation}
\label{trace-potential}
\begin{split}
N(\textup{Op}& (T_0) \circ \textup{Op}(P_0)) =\int e^{iY\eta}T_0(t|\eta|,z, z'; \wy, \widehat{\eta}) |\eta|^{-\tau+1} \\ 
&\times \int e^{i(\eta-\eta')Y'} P_0(t|\eta'|,z',\wz; \wy, \he') |\eta'|^{-p+1} \, \dbar \eta'\, dY' \, \dbar \eta \, dx\, dz' \\ 
&=\int e^{iY\eta} (T_0\circ P_0)(z, \wz; \wy, \widehat{\eta})|\eta|^{-p-\tau+1} \, \dbar \eta = N_{\tau}(T_0)\circ N_p(P_0).
\end{split}
\end{equation}

\section{Trace and Poisson operators of an elliptic edge operator}\label{trace-pot}
Let $L\in \textup{Diff}^m_e(M)$ be an elliptic differential edge operator. We use all the same notation as above, and
assume, in particular, that $B(L)$ is injective on $t^{\ud}L^2$ and surjective on $t^{\ld}L^2$.  

Define
\begin{equation}
\calH_{\ld,\ud}(L) = \{ u \in x^{\ld}L^2: Lu \in x^{\ud} L^2\}.
\label{calH}
\end{equation}
We often refer to this as $\calH_{\ld,\ud}$, or even just $\calH$. This is a Hilbert space with respect the graph norm
\[
||u||_{\calH} = ||u||_{ x^{\ld} L^2} + ||Lu||_{ x^{\ud} L^2}.
\]
In this section we define and study the trace map, which assigns to any $u \in \calH_{\ld,\ud}$ the set of
leading coefficients in its expansion with exponents between $\ld$ and $\ud$. We also construct the 
Poisson operator for $L$, which assigns to an appropriate set of leading coefficients an element 
of $\ker L \cap \calH_{\ld,\ud}$. 

A subtlety in these definitions is that leading coefficients are sections of the trace bundle 
\[
\calE(L) = \bigoplus_{j=0}^N  \calE(L; \zeta_j) 
\]
introduced in \S 2. A standing assumption in this paper is that the $\zeta_j$ are independent of $y \in B$, and because of
this, the different subbundles $\calE(L; \zeta_j)$ do not interact with one another.  Thus, to simplify the notation
in this section, we suppose that there is only a single indicial root $\zeta_0 \in \mathfrak{S}(L)$, and we 
$\calE(L) = \calE(L; \zeta_0)$. 

\subsection{The trace map for the model Bessel operator}
The model Bessel operator corresponding to $L$ is 
\[
B_{\wy, \he}(L)=\sum\limits_{j+|\A|+|\beta|\leq m}
a_{j,\A,\beta}(0,\wy,z)(t\partial_t)^j(it\he)^{\A}\partial_z^{\beta}, 
\]
which acts (as an unbounded operator) on $t^{\ld}H^m_{\mathrm{loc}}(\RR^+ \times F; dt\, dz)$.  Just as for $L$, however, 
we are primarily interested in its restriction to 
\[
\calH_{\ld, \ud}^B = \{\w\in t^{\ld}L^2(\R^+\times F; dt\, dz):  \  B(L)\w\in t^{\ud}L^2\}.
\]

If $\w \in \calH^B_{\ld,\ud}$, then we can follow the same strategy as in the proof of Proposition~\ref{explsoln}
to obtain the (strong) expansion
\[
\w \sim \sum_{\ell \geq 0} \sum_{p=0}^{p_0} t^{-i\zeta_0+\ell}(\log t)^p \, \w_{\ell,p}(\wy, z) + \tilde{\w}, 
\quad \tilde{\w} \in \bigcap_{\e > 0} t^{\ud - \e}L^2. 
\]
Indeed, writing $B(L) = I(L) + E$, where $E$ contains all terms with `extra' powers of $t$, then $B(L) \w = f$
becomes $I(L) \w = f - E \w$. The term $E \w$ creates new `higher order' terms $t^{-i\zeta_0 + \ell}$ with 
$\ell > 0$, but discarding these  we obtain
\begin{equation}
I_{\wy}(L) \left( \sum_{p=0}^{p_0} t^{-i\zeta_0}( \log t)^p \, \w_{0,p}(\wy, z) \right) = 0.
\label{may}
\end{equation}
By definition of the fibres of the trace bundle, this expression in parentheses lies in $\calE_{\wy}(L; \zeta_0)$ for each $\wy$. 

Now consider how this expansion varies as a function of $\wy$. Even if $f$ depends smoothly on $\wy$, 
the individual coefficients $\w_{0,p}$ may fail to be smooth (or even continuous) in $\wy$ because the order 
$p_0$ of the indicial root may vary.  This is where the properties of the trace bundle from \cite{KM},
discussed above in \S 2, become crucial. As explained there, on any neighbourhood 
$\calU \subset B$ over which $\calE$ is trivialized, there exist smooth functions $\phi_{\wy,k}(t,z)$, 
$k \leq  m_0 = m(\zeta_0)$, such that 
\[
\sum_{p=0}^{p_0} t^{-i\zeta_0}(\log t)^p \, \w_{0,p}(\wy, z) = \sum_{k=1}^{m_0} f_k(\wy) \phi_{\wy,k}(t,z),
\]
where, somewhat remarkably, $f_k \in \calC^\infty(\calU)$ even though the number of terms in the sum on the left 
may be discontinuous. 

Using all of this, we can now state the 
\begin{defn}
The Bessel trace map $\mathrm{Tr}_{B(L)}$ is the operator which assigns to each $\w \in \calC^\infty(S^*B; \calH_{\ld,\ud}^B)$ 
a section of $\calE(L; \zeta_0)$ which is represented in a neighbourhood $\calU \subset B$ 
in which $\calE(L; \zeta_0)$ is trivialized by the smooth basis of sections $\phi_{\wy,k}$ by the 
$m_0$-tuple $\{f_1, \ldots, f_{m_0}\}$. 
\end{defn}
Note that if $\w(\wy,\he) \in t^{\ud}L^2$ for each $(\wy, \he)$, then $\Tr_{B(L)} \w =0$. 

\begin{prop}
The operator $\mathrm{Tr}_{B(L)}$ is a representable Bessel trace kernel in the sense of Definition~\ref{trace-Bessel}. 
\end{prop}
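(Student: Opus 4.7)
The plan is to construct the Schwartz kernel of $\Tr_{B(L)}$ by inverting $B(L)$ modulo its kernel and projecting onto the finite-dimensional space of leading coefficients. For $\w \in \calH_{\ld,\ud}^B$, write $\w = G^B(B(L)\w) + P^B_1 \w$, where $G^B$ is the generalized inverse of $B_{\wy,\he}(L)$ on $t^{\ld}L^2$ and $P^B_1$ the orthogonal projector onto $\ker B(L) \cap t^{\ld}L^2$. Both operators have Schwartz kernels which are polyhomogeneous on the b-double space $(\R^+\times F)^2_b$, with index sets exactly as described at the Bessel level in \S\ref{edge-op} (this is the restriction of the statements there to the case where the base $B$ is a point). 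By construction their left-face expansions, as $t\to 0$, contain precisely the indicial exponents lying in the critical strip. Extracting the leading coefficient at $t^{-i\zeta_0}(\log t)^{p_0}$ from the left-face expansion of $K_{G^B}$ and $K_{P^B_1}$, and then composing the first with the differential operator $B(L)$ acting on the source variables, produces a single distribution $T_0(\wt, z, \wz; \wy, \he)$ on $\R^+\times F^2$ which represents $\Tr_{B(L)}$.

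The two properties in Definition~\ref{trace-Bessel} are then verified as follows. Polyhomogeneity of $T_0$ as $\wt \to 0$ is inherited from the right-face index sets of $K_{G^B}$ and $K_{P^B_1}$, composed with the polynomial action of $B(L)$ in the source variable; the resulting index set $F_{\rf}$ is thus determined by (the dual version of) the formulas in \S\ref{edge-op}. Rapid decay of $T_0$ as $\wt \to \infty$ is inherited from the corresponding decay of $K_{G^B}$ and $K_{P^B_1}$, which holds because these are smoothing operators between weighted $L^2$ spaces with the same weight $t^{\ld}$, for which the Bessel operator is fully elliptic (using Assumption~\ref{assump-uniq-cont} and the ellipticity of $B(L)$ as $t\to\infty$).

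Smooth dependence on $(\wy, \he) \in S^*B$ follows from three facts: that $B_{\wy,\he}(L)$ depends smoothly on its parameters, that $G^B$ and $P^B_1$ inherit this smooth dependence (the generalized inverse of a smoothly varying family of Fredholm operators is itself smoothly varying, given constancy of the indicial roots), and that by Proposition~\ref{trace-bundle} the fibres of $\calE(L; \zeta_0)$ fit together as a smooth vector bundle with local smooth bases $\{\phi_{\wy,k}\}$. Constancy of the index sets in the critical range then follows from Assumption~\ref{assump-const-ind-rts}.

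The main obstacle is the subtle verification of the precise polyhomogeneous structure at the boundary, particularly because the algebraic multiplicity $m(\zeta_0)$ can exceed the geometric multiplicity, so Jordan chains contribute to the expansion. The cleanest way to handle this uses the Keldy\u{s} representation~\eqref{representation}, which realizes the singular part of $I_{\wy}(L)(\zeta)^{-1}$ as a finite sum of rank-one tensors built from root functions $\phi_{0,j}(\zeta)$ and their duals $\psi_j(\zeta)$; in this language the extraction of the trace coefficients amounts to pairing $\w$ against the $\psi_j$, and $T_0$ takes the explicit form of a sum of tensor products of the $\phi_{0,j}$ and $\psi_j$ (together with b-smoothing corrections coming from the lower-order piece $E = B(L)- I(L)$). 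This explicit representation immediately yields both the polyhomogeneity at $\wt = 0$ and the rapid decay at $\wt = \infty$, and matches the smooth-basis formulation in the definition of $\Tr_{B(L)}$ above.
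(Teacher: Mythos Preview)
Your approach differs from the paper's and, while the overall strategy is plausible, the central step is underspecified in a way the paper's argument avoids.

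The paper does not pass through $G^B$ or $P_1^B$ at all. It constructs the kernel of $\Tr_{B(L)}$ directly from the Krainer--Mendoza nondegenerate pairing between $\calE_{\wy}(L;\zeta_0)$ and the trace bundle $\calE^*_{\wy}(L;\bar\zeta_0)$ of the adjoint indicial family $I_{\wy}(L)^*(\zeta):=I_{\wy}(L)(\bar\zeta)^*$. Taking dual basis elements $\phi^*_{\wy,j}$ with respect to this pairing, the kernel is the explicit contour integral
\[
\Tr_{B(L)}(t,\wz;\wy) = \bigoplus_{j=1}^{m_0} \frac{1}{2\pi}\oint_{|\zeta-\zeta_0|=\epsilon} t^{i\zeta-1}\chi(t)\, I_{\wy}(L)^*(\bar\zeta)\,\phi^*_{\wy,j}(\bar\zeta,\wz)\,d\zeta,
\]
with $\chi$ a cutoff near $0$. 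The conditions of Definition~\ref{trace-Bessel} are then read off from this formula: polyhomogeneity in $t$ from the residue expansion of $t^{i\zeta-1}$, rapid decay (indeed compact support) from $\chi$, and smooth $\wy$-dependence from the smoothness of the dual basis. Note that this kernel is independent of $\he$, reflecting that the trace is governed purely by the indicial operator.

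Your route via the decomposition $\w = G^B(B(L)\w) + P_1^B\w$ can in principle be completed, but the phrase ``extracting the leading coefficient at $t^{-i\zeta_0}(\log t)^{p_0}$'' hides the real issue: the trace is an $m_0$-tuple of scalars, not a single coefficient, and passing from the raw expansion functions $\w_{0,p}(z)\in\calC^\infty(F)$ to the coordinates $f_k$ in the basis $\{\phi_{\wy,k}\}$ requires precisely a dual-basis pairing. Your final paragraph recognizes this and invokes the Keldy\u{s} duals $\psi_j$, which is essentially the paper's mechanism --- but that pairing should be the construction, not a patch at the end. What the paper's route buys is an explicit, minimal, $\he$-independent kernel without ever invoking the Bessel parametrix; what yours would buy, if carried through, is the observation that the trace kernel is inherited from the left-face structure of $K_{G^B}$ and $K_{P_1^B}$, at the cost of a more involved verification and a less canonical representative.
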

\begin{proof}
Recall the definition of the trace bundle in Proposition \ref{trace-bundle}. 
Then for a solution $\w$, the singular part of its Mellin transform is a section of $\calE(L)$.
Consider, following \cite{KM}, the Hilbert space adjoint $I_{\wy}(L)(\zeta)^*$ of the indicial operator pencil and set 
$I_{\wy}(L)^*(\zeta):=I_{\wy}(L)(\bar{\zeta})^*$. This depends smoothly on $y$ and is a holomorphic family of
Fredholm operators in $\zeta\in \C$. Its indicial roots are the complex conjugates of elements of 
$\textup{Spec}_b(L)$. We denote its trace bundle by $\calE^*(L)$.  This suggestive notation is vindicated
by a central result in \cite[Theorem 5.3]{KM}, which asserts the nondegeneracy of the pairing 
\begin{equation}
\begin{split}
&\calE_{\wy}(L)(\zeta_0) \times \calE^*_{\wy}(L)(\bar{\zeta_0}) \rightarrow \C, \\
&[\phi, \psi]:= \frac{1}{2\pi} \oint_{B_\epsilon(\zeta_0)} \phi(\zeta) I_{\wy}(L)^*(\bar{\zeta}) \psi (\bar{\zeta}) \, d\zeta.
\end{split}
\end{equation}
for any sufficiently small $\epsilon >0$. Identifying $\calE_{\wy}(L, \zeta_0)$ with the kernel of $I_{\wy}(L)$ on the space of 
finite combinations $\sum a_q(z) t^{-i\zeta_0} \log^q (t), a_j \in C^\infty(F)$, we may assign to each basis element 
$\phi_{\wy,j}$ its dual, $\phi^*_{\wy,j}$, with respect to this pairing. If $\chi\in \calC^\infty_0(\RR)$ is a cutoff function 
which equals one near $0$, then the integral kernel of the Bessel trace map is
\begin{equation}
\begin{split}
\mathrm{Tr}_{B(L)}(t,\wz; \wy) &= \frac{1}{2\pi} \bigoplus_{j=1}^{m_0} 
 \oint_{B_\epsilon(\zeta_0)} t^{i\zeta -1} \chi(t)
I_{\wy}(L)^*(\bar{\zeta}) \phi^*_{\wy,j} (\bar{\zeta}, \wz) \, d\zeta \\
&:= \bigoplus_{j=1}^{m_0} \Phi^*_j(t,\wy,\wz).
\end{split}
\end{equation}
This satisfies the conditions of Definition \ref{trace-Bessel}, and hence $\mathrm{Tr}_{B(L)}$ is a representable Bessel 
trace kernel. 

The absence of the variable $z$ in this formula is a result of the identification of the asymptotic coefficients 
of $\w$ with local sections of the trace bundle, since this bundle is trivialized by the smooth basis $\{\phi_{\wy,j}\}$,
which has coefficients $\{f_1, ..., f_{m_0}\}$ depending only on $\wy$. 
\end{proof}

\subsection{Trace of solutions to the normal operator}
The next step is to carry out a similar analysis of the trace operator for the normal operator $N(L)$. 
Recall that $N(L)$ is identified with the restriction of the lift $\beta^*L$ to the front face in $M^2_e$ 
with respect to the blowdown map $\beta:M^2_e \to M^2$, and in the projective coordinates $(s,Y,z)$ 
from \eqref{proj-coord} this takes the form \eqref{normalop} (with $Y$ replacing the variable $u$ there). 
The normal operator is equivalent to the Bessel operator \eqref{Bessel} through Fourier transform (in $Y$) 
and rescaling (setting $s=t/|\eta|$): 
\[
\mathscr{F}\circ N_{\wy}(L) \circ \mathscr{F}^{-1} \mid_{s=t/|\eta|} = 
B_{\wy,\he}(L).
\]
Thus if $\w \in s^{\ld}L^2(ds\, dY\, dz)$ is such that $N(L)\w\in s^{\ud}L^2$, then its Fourier transform $\widehat{\w}$
evaluated at $s = t/|\eta|$ is an element of $\calH^B_{\ld,\ud}$. As such, it can be written locally as
\[
\widehat{\w} (s, \eta, z) = \sum_{k=1}^{m_0} a_k(\wy, \eta) \phi_{\wy,k}(s|\eta|, z).
\]
We define the trace map for $N_{\wy}(L)$ as 
\begin{align*}
\textup{Tr}_{N(L)} \w &:= \bigoplus_{j=1}^{m_0} \int e^{i(Y-\widetilde{Y})\eta} \Phi^*_j(s|\eta|,\wy, \wz) 
\, \w (s, \widetilde{Y}, \wz) |\eta|^{-i\zeta_0+1}\, ds\, \dbar\eta\, d\widetilde{Y}\, d\wz\\
&= \bigoplus_{j=1}^{m_0} \int_{\R^b} e^{iY\eta} a_j(\wy, \eta) |\eta|^{-i\zeta_0} \,\dbar \eta\in H^{-(\Im\zeta_0 -\ld +1/2)}(\RR^b, dY) \otimes \calE_{\wy}(L;\zeta_0).
\end{align*}
where we used the regularity result \cite[Thm. 7.3]{M}. 
From \eqref{N-trace} and since $\textup{Tr}_{B(L)}$ 
is a Bessel trace kernel,  we infer that 
\begin{align}\label{normal-trace}
\textup{Tr}_{N(L)}(Y, \wy, \wz) = \int_{\R^b} e^{iY\eta} \textup{Tr}_{B(L)} (|\eta|, \wy, \wz) |\eta|^{-i\zeta_0+1}\, \dbar \eta,
\end{align}
is smooth on the front face of $T_e$ and polyhomogeneous at the boundaries of this face.

\subsection{The trace map of $L$}\label{trace-bounded}
The construction above determines a Schwartz kernel representation for a trace map of the operator $L$ itself. 
Indeed, following \eqref{normal-trace}, define in local coordinates of the corner neighborhood in $M^2$
\[
\textup{Tr}_L(\wx, y, \wy, \wz) := \int_{\R^b} e^{i\eta (y-\wy)} \textup{Tr}_{B(L)} (x|\eta|, \wy, \wz) |\eta|^{-i\zeta_0+1}\, \dbar \eta, 
\]
and extend smoothly to the interior. From the work above,
\[
\textup{Tr}_L : \calH_{\ld, \ud} \to H^{-(\Im\zeta_0 -\ld +1/2)}(B,\calE(L;\zeta_0))
\]
is a bounded mapping, a representable trace operator. Note that this operator $\mathrm{Tr}_L$ is by no means unique. 

\subsection{The edge Poisson operator}
We define the Bessel Poisson operator
\begin{align*}
P_0:\calE(L,\zeta_0) \to \calH^B_{\ld,\ud}, \quad (f_1, ...,f_{m_0}) \mapsto \sum_{j=1}^{m_0} f_j \phi_{\wy, j},
\end{align*}
with the integral kernel (as before $\wz$ is absent)
\[
P_0(t,z;\wy) = \bigoplus\limits_{j=1}^{m_0} \phi_{\wy, j}(t,z).
\]
In particular, $P_0$ is representable in the sense of Definition \ref{potential-Bessel}, with the index set 
$J_{\lf}$ determined by the asymptotic expansion of each $\phi_{\wy, j}$. The associated normal operator
is given by
\[
N_{-i\zeta_0+1}(P_0) = \int_{\R^b} e^{iY\eta} P_0(|\eta|, z; \wy) |\eta|^{i\zeta_0}\, \dbar \eta,
\]
which we extend off the front face to define an edge Poisson operator 
\begin{align*}
\textup{Op}(P_0): H^{-(\Im(\zeta_0) -\ld +1/2)}(B, \calE(\zeta_0)) 
\longrightarrow x^{\ld}H^{\infty}_e(M,g).
\end{align*}
Consider the orthogonal projector (cf. \cite{M})
\begin{align}
P_1: x^{\ld}L^2(M,g)\to \ker L\cap x^{\ld}L^2(M,g),
\end{align} 
which is a residual edge operator discussed in \S \ref{edge-pseudos}, 
with the corresponding edge Bessel kernel $P_{01}$, which is the 
Schwartz kernel of the orthogonal projection of $t^{\ld}L^2(dt\, dz)$ 
onto  $\ker B(L)(\wy, \widehat{\eta})\cap t^{\ld}L^2(dt\, dz)$. We define
\begin{align}
P_L=P_1\circ \textup{Op}(P_0).
\end{align}
By the composition rule \eqref{edge-potential} we find
\begin{align*}
N(P_L)=N_0(P_1) \circ N_1(P_0)= \int e^{iY\eta} (P_{01}\circ P_0)(|\eta|, z) \, \dbar \eta.
\end{align*}
The restriction of Bessel trace map $\textup{Tr}_{B(L)}$ to $\ker B(L) \cap t^{\ld}L^2$ is injective, 
since $B(L)$ is injective on $t^{\ud}L^2$. Hence $\textup{Tr}_{B(L)}$ admits a left-inverse $\textup{Tr}_{B(L)}^{-1}$,
mapping $\calE_{\wy}(L,\zeta_0)$ to $\ker B(L) \cap t^{\ld}L^2$, which is a true inverse when restricted to 
$\textup{im}\, \textup{Tr}_{B(L)}(\ker B(L) \cap t^{\ld}L^2)$.

\begin{lemma}\label{bessel-P-T}
$\textup{Tr}_{B(L)}^{-1} = P_{01} \circ P_0 \restriction \textup{im}\, \textup{Tr}_{B(L)}(\ker B(L) \cap t^{\ld}L^2).$
\end{lemma}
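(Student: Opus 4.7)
The strategy is to argue by uniqueness: both sides of the asserted identity land in $\ker B(L)\cap t^{\ld}L^2$, where $\textup{Tr}_{B(L)}$ is known to be injective, so it suffices to show that $\textup{Tr}_{B(L)}(P_{01}P_0 f)=f$ for every $f$ in the relevant image. Equivalently, writing $u:=\textup{Tr}_{B(L)}^{-1}(f)$ for its unique preimage in $\ker B(L)\cap t^{\ld}L^2$, the goal is to check $P_{01}P_0 f = u$.

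First I would compute $\textup{Tr}_{B(L)}(P_0 f) = f$ directly from the definition $P_0 f = \sum_{j=1}^{m_0} f_j\,\phi_{\wy,j}$: the $\phi_{\wy,j}$ form the trivialising basis of $\calE(L;\zeta_0)$ consisting of finite combinations of $t^{-i\zeta_0}(\log t)^q a_{j,q}(z)$, and $\textup{Tr}_{B(L)}$ simply reads off the coefficients in this basis. This makes the difference $w := P_0 f - u \in \calH^B_{\ld,\ud}$ have vanishing trace, while still satisfying $B(L)w = B(L) P_0 f \in t^{\ud}L^2$ (since $u\in\ker B(L)$ and $P_0f\in\calH^B$). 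Running the Mellin-contour argument behind Proposition~\ref{explsoln} on $w$, the vanishing of its trace rules out any expansion term of $w$ in the critical strip $(\ld-1/2,\ud-1/2)$ (under the standing single-indicial-root assumption), forcing $w \in t^{\ud}L^2$; edge elliptic regularity then promotes this to $w \in t^{\ud}H^m$.

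Since $P_{01}u = u$ as $u \in \ker B(L) \cap t^{\ld}L^2$, the identity $P_{01}P_0 f = u$ reduces to $P_{01} w = 0$, i.e.\ $w \perp \ker B(L) \cap t^{\ld}L^2$ in $t^{\ld}L^2$. To verify this I would pair $w$ against an arbitrary $u' \in \ker B(L) \cap t^{\ld}L^2$ via Green's identity: the bulk terms collapse since $B(L)u' = 0$; the boundary term at $t \to \infty$ vanishes from the global $L^2$-decay of $u'$; and the boundary term at $t = 0$ vanishes because $w \in t^{\ud}H^m$ decays strictly faster than the leading $t^{-i\zeta_0}(\log t)^p$ asymptotics of $u'$ allowed by $\Im\zeta_0\in(\ld-1/2,\ud-1/2)$. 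This leaves $\langle w,u'\rangle_{t^{\ld}L^2}=0$, so $P_{01}w=0$ and the lemma follows.

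The hard part will be the final Green's-identity step: to make the boundary estimate rigorous one must keep precise track of the weights $\ld,\ud$, of the decay/growth rates of $w$ and of $u'$ at both endpoints, and of the measure $dt\,dz$ used in defining the inner product. At $t=0$ one has to invoke the nondegenerate pairing between $\calE_{\wy}(L)(\zeta_0)$ and $\calE^*_{\wy}(L)(\bar\zeta_0)$ from \cite[Theorem 5.3]{KM} (used earlier to construct the dual basis $\phi^*_{\wy,j}$ and the trace kernel) to ensure that the residue-type boundary contributions really do cancel once $w$ has trace zero, rather than merely being finite.
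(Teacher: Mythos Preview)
Your overall strategy coincides with the paper's: both reduce to the injectivity of $\textup{Tr}_{B(L)}$ on $\ker B(L)\cap t^{\ld}L^2$, and your first observations (that $\textup{Tr}_{B(L)}(P_0 f)=f$ and that $w:=P_0f-u$ has vanishing trace, hence lies in $t^{\ud}L^2$) are fine. The gap is in the last step. Green's identity gives a relation of the form
\[
\langle B(L)w,\,u'\rangle - \langle w,\,B(L)^{\dagger}u'\rangle = \text{boundary terms},
\]
which involves $B(L)$ and its formal adjoint, not the bare $t^{\ld}L^2$ inner product $\langle w,u'\rangle_{t^{\ld}L^2}$ that you need in order to conclude $P_{01}w=0$. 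Knowing $B(L)u'=0$ kills one bulk term, but you have no control over $B(L)^{\dagger}u'$, and in any case neither bulk term is $\langle w,u'\rangle_{t^{\ld}L^2}$. In fact the statement ``$v\in t^{\ud}L^2$ implies $P_{01}v=0$'' is false in general: there is no reason for $t^{\ud}L^2$ to sit in the $t^{\ld}L^2$-orthogonal complement of $\ker B(L)$. So the argument, as written, would prove too much.

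The paper bypasses this entirely with a one-line use of the generalized inverse $G_0$ for $B(L)$ on $t^{\ld}L^2$: from $G_0 B(L)=I-P_{01}$ one gets $(I-P_{01})\psi=G_0B(L)\psi\in t^{\ud}L^2$ for every $\psi\in\calH^B_{\ld,\ud}$, hence $\textup{Tr}_{B(L)}\psi=\textup{Tr}_{B(L)}(P_{01}\psi)$. Applying this with $\psi=P_0f$ yields $\textup{Tr}_{B(L)}(P_{01}P_0f)=\textup{Tr}_{B(L)}(P_0f)=f$; since $P_{01}P_0f$ and $u$ both lie in $\ker B(L)\cap t^{\ld}L^2$ and have the same trace, injectivity gives $P_{01}P_0f=u$. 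This is the missing ingredient you should substitute for the Green's-identity step.
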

\begin{proof}
Note that by \cite[(5.8)]{M} there exists a generalized inverse $G_0$ such that 
$G_0B(L)=I-P_{01}$. Consequently, for any $\w \in \calH^B_{\ld,\ud}$ we find 
$\w- P_{01}\w =G_0B(L)\w \in t^{\ud}L^2$. Hence, $\textup{Tr}_{B(L)}\w = \textup{Tr}_{B(L)}P_{01} \w$. Thus
\[
\textup{Tr}_{B(L)} P_{01} \circ P_0 (\textup{Tr}_{B(L)}\w) = \textup{Tr}_{B(L)} P_0(\textup{Tr}_{B(L)}\w) = \textup{Tr}_{B(L)} \w.
\]
If $\w\in \ker B(L) \cap t^{\ld}L^2$, then $\w = P_{01}\circ P_0 (\textup{Tr}_{B(L)} \w)$ since $B(L)$ is
injective on $t^{\ud}L^2$.
\end{proof}

\begin{prop} $(N(P_L)\circ \textup{Tr}_{N(L)}) \w=\w$, for $\w\in \ker N(L)\cap s^{\ld}L^2$.
\end{prop}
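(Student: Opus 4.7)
The plan is to Fourier-conjugate the assertion into a fibrewise identity for the Bessel operator, at which point it reduces directly to Lemma~\ref{bessel-P-T}. Recall the equivalence
\[
\mathscr{F}_{Y \to \eta} \circ N_{\wy}(L) \circ \mathscr{F}^{-1}_{\eta\to Y}\bigr|_{s = t/|\eta|} = B_{\wy,\he}(L).
\]
Hence if $\w \in \ker N(L) \cap s^{\ld}L^2$, then for each $\eta \neq 0$ the rescaled Fourier transform $\widehat{\w}(t/|\eta|, \eta, z)$ lies in $\ker B_{\wy,\he}(L) \cap t^{\ld}L^2(dt\, dz)$; in particular Lemma~\ref{bessel-P-T} applies to it fibrewise in $\eta$.

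First I would rewrite each factor of the composition as the Fourier conjugate of its Bessel-level counterpart. From \eqref{normal-trace} and the definition of $\textup{Tr}_{N(L)}$ in \S\ref{trace-pot}, the trace map acts as
\[
\textup{Tr}_{N(L)}\w(Y,\wy,\wz) = \int_{\R^b} e^{iY\eta}\,\bigl[\textup{Tr}_{B(L)}\bigl(\widehat{\w}(\,\cdot\,,\eta,\,\cdot\,)\bigr)\bigr](\wy,\wz)\,|\eta|^{-i\zeta_0+1}\,\dbar\eta,
\]
that is, by applying $\textup{Tr}_{B(L)}$ fibrewise in $\eta$ to the rescaled Fourier transform and then inverse Fourier transforming. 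Likewise, the relation $N(P_L) = N_0(P_1) \circ N_1(P_0)$ derived from the composition formula \eqref{edge-potential}, together with \eqref{N-potential}, exhibits $N(P_L)$ as the Fourier conjugate of the Bessel operator $P_{01} \circ P_0$ acting fibrewise in $\eta$.

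Second, composing these two representations and using Fourier inversion in $Y$ yields, modulo bookkeeping of the homogeneity factors $|\eta|^{\pm i\zeta_0+1}$,
\[
\bigl(N(P_L) \circ \textup{Tr}_{N(L)}\bigr)\w(s,Y,z) = \int_{\R^b} e^{iY\eta}\,\bigl[(P_{01}\circ P_0)\circ \textup{Tr}_{B(L)}\bigr]\bigl(\widehat{\w}(\,\cdot\,,\eta,\,\cdot\,)\bigr)(s|\eta|,z)\,\dbar\eta.
\]
For each fixed $\eta$ the function $\widehat{\w}(\,\cdot\,,\eta,\,\cdot\,)$ lies in $\ker B_{\wy,\he}(L) \cap t^{\ld}L^2$, so Lemma~\ref{bessel-P-T} identifies the bracketed expression with $\widehat{\w}(\,\cdot\,,\eta,\,\cdot\,)$ itself. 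Inserting this and recognising the remaining integral as the inverse Fourier transform in $\eta$ of $\widehat{\w}$ evaluated at $t = s|\eta|$ recovers $\w(s,Y,z)$.

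The main obstacle is purely one of bookkeeping: one must check that the various rescaling factors $|\eta|^{\pm i\zeta_0+1}$ arising in the definitions \eqref{N-edge}, \eqref{N-trace}, \eqref{N-potential} and in the integral kernel of $\textup{Tr}_{N(L)}$ cancel precisely under composition, so that Fourier inversion produces the identity without any leftover rescaling. Once these indices are aligned, the assertion reduces to a fibrewise application of Lemma~\ref{bessel-P-T} together with Fourier inversion in $\eta$.
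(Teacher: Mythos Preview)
Your proposal is correct and follows essentially the same route as the paper: Fourier-conjugate to the Bessel level, identify the composed normal operator with $P_{01}\circ P_0\circ \textup{Tr}_{B(L)}$ acting fibrewise in $\eta$ (the paper does this via \eqref{potential-trace}), apply Lemma~\ref{bessel-P-T} to $\widehat{\w}(\,\cdot\,,\eta,\,\cdot\,)\in\ker B(L)\cap t^{\ld}L^2$, and invert the Fourier transform. The bookkeeping concern you flag about the homogeneity factors is real but routine, and the paper handles it implicitly in the direct kernel computation.
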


\begin{proof}
We compute according to \eqref{potential-trace}
\begin{align*}
N(P_L)\circ \textup{Tr}_{N(L)}= \int e^{iY\eta}(P_{01} \circ P_0 \circ \textup{Tr}_{B(L)(\wy,\he)}) 
(s|\eta|, \widetilde{s}|\eta|, z, \wz) \, \dbar \eta.
\end{align*}
which is the normal operator of a residual edge operator. Consider $\w(s,Y,z)\in \ker N(L)\cap s^{\ld}L^2$. 
As before, $\widehat{\w}(t/|\eta|,\eta,z) \in \ker B(L)(\wy, \widehat{\eta})\cap t^{\ld}L^2$. 
Thus we compute by Lemma \ref{bessel-P-T}
\begin{align*}
&N(P_L)\circ \textup{Tr}_{N(L)}\, \w \\ &=\int e^{i(Y-\widetilde{Y})\eta}
(P_{01} \circ P_0 \circ \textup{Tr}_{B(L)(\wy,\he)}) 
(s|\eta|, \widetilde{s}|\eta|, z, \wz) \w(\widetilde{s}, \widetilde{Y}, \wz)\, \dbar \eta\, d\widetilde{s}\, d\widetilde{Y}\, d\wz\\
&=\int e^{iY\eta} (P_{01} \circ P_0 \circ \textup{Tr}_{B(L)(\wy,\he)}) 
(s|\eta|, \widetilde{s}|\eta|, z, \wz)\, \widehat{\w}
(\widetilde{s},\eta,\wz) \, \dbar \eta\, d\widetilde{s}\, d\wz \\ &= \int e^{iY\eta}\,  \widehat{\w}(s,\eta, z) \, 
\dbar \eta= \w(s,Y,z).
\end{align*}
\end{proof}

\section{Fredholm theory of elliptic edge boundary value problems}\label{fredholm}
We return now to the general situation, where $\mathfrak{S}(L) = \{\zeta_0,..,\zeta_N\}$. Fix a collection $E_1,..,E_M$ of finite 
rank vector bundles over $B$ and set $E = \oplus_{k=1}^M E_k$. Now consider the collection of classical pseudodifferential operators 
\begin{align*}
&Q_{kj} \in \Psi^{d_k - \Im (\zeta_j)}(B; \calE(L; \zeta_j), E_k), \quad j=1,..,N, \ k=1,..,M, \\
&Q_{kj}: H^s(B, \calE(L, \zeta_j)) \to H^{s-d_k+\Im (\zeta_j)}(B,E_k), \ s\in \R.
\end{align*}
Define the homogeneity rescalings
\begin{align*}
&\eta(L): \calE \to \calE, \ 
(u_1,..,u_N) \mapsto (|\eta|^{\Im\zeta_1}u_1,..,|\eta|^{\Im\zeta_N}u_N), \\
&\eta(Q): E\to E, \ (e_1,..,e_M)\mapsto (|\eta|^{d_1}e_1,..,|\eta|^{d_M}e_M).
\end{align*}
The matrix $(Q_{kj})$ defines the pseudodifferential system $Q$ where
\[
\sigma_0(Q)(\wy,\eta)=\eta(Q)\circ \sigma_0(Q)(\wy,\widehat{\eta})\circ \eta(L)^{-1}. 
\]
(Note that $\eta$ appears on the left and $\he = \eta/|\eta|$ on the right.)

We now recall the form of the general edge boundary value problem: 
\begin{defn}
Let $L\in \textup{Diff}^m_e(M)$ be edge elliptic, and suppose that $Q = (Q_{kj})$ is
as above. Then the edge boundary value problem $(L,Q)$ is the set of equations
\begin{align*}
& Lu=f\in x^{\ud}L^2(M), \\
& Q(\textup{Tr}_L\, u)=\phi \in \bigoplus\limits_{k=1}^M H^{\ld -d_k-1/2}(B,E_k). 
\end{align*}
for $u\in x^{\ld}H^m_e(M)$.
\end{defn}
We have already stated, in Definition~\ref{typesofbcs}, the definitions of right-, left- and full ellipticity
of the boundary problem $(L,Q)$. 

Clearly
\begin{equation}
\begin{split}
(L,Q): & \calH \to x^{\ud}L^2(M,g)\oplus \left(\bigoplus\limits_{k=1}^M H^{\ld-d_k-1/2}(B,E_k)\right), \\
& u\mapsto (Lu, Q(\textup{Tr}_L\, u)).
\end{split}
\label{mainmap}
\end{equation}
is continuous.  Our goal is to show that it is semi-Fredholm if $(L,Q)$ satisfies conditions i) or ii) of Definition~\ref{typesofbcs}, 
and Fredholm if $(L,Q)$ satisfies condition iii).  This is proved by a parametrix construction. 

\subsection{The right-elliptic case} 
Consider a right-elliptic system $(L,Q)$. Since 
\begin{align*}
\sigma(Q)\restriction \textup{im}\, 
\textup{Tr}_{B(L)}:\textup{im}\, \textup{Tr}_{B(L)} \to E
\end{align*}
is surjective, there exists a right parametrix 
\begin{align*}
K:\bigoplus\limits_{k=1}^MH^{\ld-d_k-1/2}(B,E_k)\to \bigoplus\limits_{j=1}^N H^{\ld -\Im (\zeta_j)-1/2}(B,\calE(L; \zeta_j))
\end{align*}
for $Q$; this has principal symbol 
\begin{equation}\label{K}
\begin{split}
&\sigma(K)(\wy, \eta)=\eta(L)\circ \sigma(Q)^{-1}(\wy, \widehat{\eta})\circ \eta(Q)^{-1}, \\
&\sigma(Q)^{-1}(\wy, \widehat{\eta}): E_{\wy} \to \textup{im}\, \textup{Tr}_{B(L)(\wy,\he)},
\end{split}
\end{equation}
where $\sigma(Q)^{-1}(\wy, \widehat{\eta})$ is some choice of right-inverse for $\sigma(Q)(\wy,\widehat{\eta})
\restriction \textup{im}\, \textup{Tr}_{B(L)}$ which varies smoothly in $(\wy, \widehat{\eta})$. 
\begin{thm}\label{right}
If $(L,Q)$ is right-elliptic, then \eqref{mainmap} is semi-Fredholm, with closed range of finite codimension. 
A right parametrix for it is given by 
\begin{align*}
\mathcal{G}(f,\phi)= Gf + P_L[K(\phi - Q(\textup{Tr}_{L}Gf))],
\end{align*}
where $G$ is the generalized inverse for $L$ on $x^{\ld}H^m_e(M)$. 
\end{thm}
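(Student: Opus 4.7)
The strategy is the standard one: the formula for $\mathcal{G}$ produces a right inverse of $(L,Q)$ modulo compact operators, from which the closed range and finite codimension conclusions follow by abstract semi-Fredholm theory. Concretely, I plan to verify that
\[
(L,Q)\circ \mathcal{G}(f,\phi) = (f,\phi) + \calK(f,\phi),
\]
where $\calK$ is compact on $x^{\ud}L^2(M)\oplus\bigoplus_k H^{\ld-d_k-1/2}(B,E_k)$. The boundedness of $\mathcal{G}$ on the relevant spaces is almost formal given the boundedness of $G$ (from the generalized inverse construction of \S 2), of $\textup{Tr}_L$ (from \S \ref{trace-bounded}), of $K$ (its mapping properties are built into its construction via \eqref{K}), and of $P_L$ into $\ker L\cap \calH$ (from \S \ref{trace-pot}); this should be recorded first.

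For the $L$-component of the composition, I apply $L$ to $\mathcal{G}(f,\phi)$ and use two facts: $LG = I - P_2$, where $P_2$ is the finite-rank orthogonal projector onto the cokernel (finite rank since $\ld$ is in the essentially-surjective range), and $LP_L = 0$ because $P_L$ lands in $\ker L$ by construction. Thus $L\mathcal{G}(f,\phi) = f - P_2 f$, which equals $f$ modulo a finite-rank (hence compact) error.

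The $Q$-component is where the real content lies. Expanding,
\[
Q\,\textup{Tr}_L \mathcal{G}(f,\phi) = Q\,\textup{Tr}_L Gf + Q\,\textup{Tr}_L P_L\bigl[K(\phi - Q\,\textup{Tr}_L Gf)\bigr],
\]
and I want to rewrite this as $\phi$ plus a compact remainder. This relies on two identities holding modulo compact operators on the appropriate Sobolev spaces over the compact manifold $B$: first, that $QK = \textup{Id}$ on $\bigoplus_k H^{\ld-d_k-1/2}(B,E_k)$ modulo smoothing, which is simply the defining property of the parametrix $K$ of Definition \ref{typesofbcs}(i) together with right-ellipticity and \eqref{K}; and second, that $\textup{Tr}_L\circ P_L = \textup{Id}$ on $\textup{im}\, K$ modulo smoothing operators on $B$. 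The latter identity I plan to establish by computing the normal operator of $\textup{Tr}_L P_L$ via the composition formulas of \S \ref{representable}: by \eqref{potential-trace} and the construction of $P_L = P_1\circ \textup{Op}(P_0)$, one obtains
\[
N(\textup{Tr}_L\circ P_L) = N_\tau(\textup{Tr}_{B(L)})\circ N_0(P_{01})\circ N_1(P_0)
= \textup{Tr}_{B(L)}\circ P_{01}\circ P_0,
\]
and Lemma \ref{bessel-P-T} identifies this with the identity on the Calderon subspace, i.e.\ on the image of the principal symbol of $K$. Hence $\textup{Tr}_L P_L K - K$ is a pseudodifferential operator on $B$ of strictly lower order, therefore smoothing modulo lower-order iterations, and by the compactness of the embedding of Sobolev spaces on the compact manifold $B$ it is compact between the spaces appearing here. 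Combining these, the $Q$-component collapses to $\phi$ plus a compact term.

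The main obstacle is the second identity: pinning down precisely in what sense $\textup{Tr}_L\circ P_L$ acts as the identity on sections of the trace bundle that arise from $K$. At the Bessel and normal-operator level one has the clean algebraic identity from Lemma \ref{bessel-P-T}, but lifting this to the operator level requires the composition theorems for representable trace and Poisson operators (Theorems \ref{triple-ep} and \ref{triple-pt}) together with the symbolic calculation above, and a careful bookkeeping of the index sets to ensure that the residual terms land in a subclass whose Schwartz kernels push forward to smoothing operators on the compact base $B$. Once these two mod-compact identities are in hand, the abstract fact that a bounded map which admits a right inverse modulo a compact operator has closed range of finite codimension yields the semi-Fredholm conclusion.
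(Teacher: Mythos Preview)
Your proposal is correct and follows essentially the same approach as the paper. The only minor organizational difference is that the paper computes the principal symbol of the single operator $Q\circ\Tr_L\circ P_L\circ K - I$ directly via \eqref{trace-potential} and shows it vanishes (since $\sigma(K)$ lands in $\calC_{B(L)}$, on which $\Tr_{B(L)}\circ P_{01}\circ P_0 = \textup{Id}$ and $\sigma(Q)\circ\sigma(Q)^{-1} = \textup{Id}$), whereas you split this into the two sub-identities $\Tr_L P_L K - K$ and $QK - I$ each being of lower order; both routes yield the same compactness conclusion on the compact base $B$.
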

\begin{proof}
By definition, $LG = \mbox{Id} - P_2$, where $P_2$ is the orthogonal projection onto the finite-dimensional space 
$\textup{coker}L\cap x^{\ld}L^2$. Thus if $f\in x^{\ud}L^2$, then 
\begin{align*}
\|Gf\|_\calH&=\|Gf\|_{x^{\ld}H^m_e}+ \|LGf\|_{x^{\ud}L^2}\\
&\leq \|Gf\|_{x^{\ld}H^m_e}+ \|f\|_{x^{\ud}L^2} + \|P_2f\|_{x^{\ud}L^2}.  
\end{align*}
Since $G$ is bounded on $x^{\ld}L^2$ and $||f||_{x^{\ld}L^2} \leq ||f||_{x^{\ud}L^2}$, we have 
$||Gf||_{\calH} \leq C (||f||_{x^{\ud}L^2} + ||P_2 f||_{x^{\ld}L^2})$.  Hence $G: \ker P_2 \cap x^{\ud}L^2 \rightarrow \calH$ 
is bounded. For simplicity below, we assume that $P_2 \equiv 0$; if this projector is nontrivial, it only changes
things by a finite dimensional amount, which does not affect any of the Fredholmness statements below. 

Next, both 
\begin{align}\label{bded2}
P_L:  \bigoplus_{j=0}^N H^{-(\Im (\zeta_j) -\ld +1/2)}(B, \calE(L, \zeta_j)) \to \ker L \cap x^{\ld}H^{\infty}_e(M) \subset \calH
\end{align}
and 
\begin{equation}\label{bded3}
\textup{Tr}_{L}: \calH \to  \bigoplus_{j=0}^N H^{-(\Im (\zeta_j) -\ld +1/2)}(B, \calE(L,\zeta_j)),
\end{equation}
are continuous, the latter by the discussion in \S \ref{trace-bounded}. All of this, together with continuity of the
pseudodifferential operators $Q$ and $K$ between appropriate Sobolev spaces over $B$, shows that the
parametrix $\calG$ is a bounded mapping.

We now compute the error term $((L,Q)\mathcal{G} - \mbox{Id})(f,\phi)$. 
Since $L P_L = 0$, and we are assuming that the cokernel of $L$ is trivial, we have $L \calG (f,\phi) = LGf = f$. 
Next, 
\begin{align*}
Q\, \textup{Tr}_{L}\, \mathcal{G}(f,\phi)&= Q\left[\textup{Tr}_{L}\, Gf+ 
\textup{Tr}_{L}\, P_L(K(\phi-Q(\textup{Tr}_{L}\, Gf)))\right]\\
&= Q\textup{Tr}_{L}\, Gf+ (Q\circ \textup{Tr}_{L}\circ P_L\circ K)(\phi-Q(\textup{Tr}_{L}\, Gf))\\
&=\phi +  (Q\circ \textup{Tr}_{L}\circ P_L\circ K-I)(\phi-Q(\textup{Tr}_{L}\, Gf)).
\end{align*}
It thus remains to prove that 
\[
(Q\circ \textup{Tr}_{L}\circ P_L\circ K-I): \bigoplus\limits_{k=1}^M 
H^{\ld-d_k-1/2}(B,E_k) \to \bigoplus\limits_{k=1}^MH^{\ld-d_k-1/2}(B,E_k)
\]
is compact. This is however simply a pseudo-differential operator over the closed manifold $B$, so it suffices to check that
its principal symbol vanishes. We compute, using \eqref{trace-potential}, that
\begin{align*}
&\sigma_0(Q\circ \textup{Tr}_{L}\circ P_L\circ K-I)(\wy,\eta)\\
&= \sigma(Q)(\wy, \eta)\circ (\textup{Tr}_{B(L)}\circ P_{01} \circ P_0)\circ \sigma(K)(\wy, \eta)-I.
\end{align*}
By definition, $\sigma(K)$ maps into $\calC_{B(L)}$, so all terms cancel and this principal symbol vanishes. 
This completes the proof. 
\end{proof}

\subsection{Left-elliptic edge boundary value problem} 
Now consider a set of boundary operators $Q$ which satisfy the left-elliptic conditions. 
Since $\sigma_Q(\wy, \he) \restriction \calC_{B(L)}$ is injective, there exists a matrix of pseudodifferential operators
\[
K:\bigoplus\limits_{k=1}^M H^{\ld-d_k-1/2}(B,E_k)\to 
\bigoplus\limits_{j=1}^N H^{\ld -\Im (\zeta_j)-1/2}(B, \calE(L; \zeta_j)),
\]
with principal symbol 
\[
\sigma(K)(\wy, \eta)=\eta(L)\circ \sigma(Q)^{-1}(\wy, \widehat{\eta})\circ \eta(Q)^{-1}
\]
where 
\[
\sigma(Q)^{-1}(\wy, \widehat{\eta}): E_{\wy} \to \calC_{B(L)}, 
\]
is a left-inverse to $\sigma(Q)(\wy,\he)\restriction \calC_{B(L)}$. Note that $K$ is 
not necessarily a left-parametrix for $Q$, since $\sigma(Q)^{-1}(\wy, \he)$ does not invert the full symbol
$\sigma(Q)(\wy,\he)$, but this is not required for our argument.
\begin{thm}\label{left}
If $(L,Q)$ is left-elliptic, then 
\[
(L,Q): \calH \to x^{\ud}L^2(M)\oplus \left(\bigoplus\limits_{k=1}^MH^{\ld-d_k-1/2}(B,E_k)\right),
\]
is semi-Fredholm with left parametrix
\begin{align*}
\calG(f,\phi)= Gf + P_L[K(\phi - Q(\textup{Tr}_{L}\, Gf))].
\end{align*}
\end{thm}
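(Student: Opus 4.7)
The plan is to mirror the proof of Theorem~\ref{right}, but now verify that $\calG$ acts as a \emph{left} parametrix: the goal is to show that $\calG \circ (L,Q) - I$ is compact on $\calH$, from which the finite dimensionality of $\ker(L,Q)$ follows by standard functional analysis. Boundedness of $\calG$ between the relevant spaces is identical to the right-elliptic case and needs no new input. Here $G$ is the generalized inverse of $L$ on $x^{\ld}L^2$, so $GL = I - P_1$ with $P_1$ the orthogonal projection onto $\ker L \cap x^{\ld}L^2$; this kernel may be infinite dimensional (since at weight $\ld$ the operator $L$ is only essentially surjective), but $P_1$ is still a bounded orthogonal projection and the finite dimensionality we are after will be extracted entirely from the boundary symbol.

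For the core computation, using $u - GLu = P_1 u$ together with the identity $P_L \Tr_L v = v$ for $v \in \ker L \cap \calH$ (applied with $v = P_1 u$), one gets
\begin{align*}
\calG(Lu, Q\Tr_L u)
&= GLu + P_L K\bigl(Q\Tr_L u - Q\Tr_L GLu\bigr) \\
&= (u - P_1 u) + P_L K Q \Tr_L P_1 u \\
&= u + P_L(KQ - I)\,\Tr_L P_1 u,
\end{align*}
so that
\begin{equation*}
\calG \circ (L,Q) - I \;=\; P_L(KQ - I)\,\Tr_L P_1 \quad \text{on } \calH.
\end{equation*}

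It remains to prove compactness of this error on $\calH$. The operator $KQ - I$ is a pseudodifferential operator on the closed base $B$ acting on sections of the trace bundle $\calE(L)$, and by the construction of $K$ (whose principal symbol is, by definition, a left inverse of $\sigma(Q)$ restricted to $\calC_{B(L)}$), the principal symbol of $KQ - I$ vanishes on $\calC_{B(L)}(\wy,\he)$ for every $(\wy,\he) \in S^*B$. The crucial additional point is that $\Tr_L \circ P_1$ factors symbolically through the Calderon subspace: for any $u \in \calH$, the element $P_1 u$ lies in $\ker L \cap \calH$ so that $\Tr_L P_1 u \in \calC_L$, whose principal fibre over $(\wy,\he)$ is exactly $\calC_{B(L)}(\wy,\he)$ by the construction of $\calC_L$ in \S\ref{trace-pot}. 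Consequently $(KQ - I)\circ \Tr_L P_1$ is one order smoother on $B$ than the naive bound, and composition with the Poisson operator $P_L$ then yields a compact operator on $\calH$ by Rellich's theorem on the closed manifold $B$, in exact parallel with the final step of Theorem~\ref{right}.

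The main technical obstacle is making this last symbolic cancellation rigorous. The natural route is to identify $\Tr_L \circ P_1$ with a representable trace operator in the sense of \S\ref{representable}, whose Bessel trace kernel annihilates the complement of $\calC_{B(L)}$; the composition formula \eqref{trace-potential} then combines with the symbolic identity $\sigma(KQ - I)|_{\calC_{B(L)}} = 0$ to produce a pseudodifferential operator of strictly lower order on $B$, whence compactness via Rellich. The rest of the argument is purely formal and reproduces the pattern already laid down in the right-elliptic case.
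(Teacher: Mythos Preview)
Your overall strategy is correct, and the computation leading to the error term is essentially the right one, but there are two points where your route diverges from the paper's and where your version has a genuine gap.

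First, the algebraic step where you invoke $P_L \Tr_L v = v$ for $v = P_1 u \in \ker L \cap \calH$ is not rigorously available. The statement in \S\ref{overview} is an informal preview; what is actually proved (the Proposition at the end of \S\ref{trace-pot}) is only the identity $N(P_L)\circ \Tr_{N(L)} = I$ at the level of normal operators. The operators $P_L$ and $\Tr_L$ are built by extending normal operators off the front face, so $P_L\Tr_L - I$ need not vanish on $\ker L$; it merely has vanishing normal operator. Using your factorization therefore presupposes the very compactness you are trying to establish. The paper sidesteps this entirely by leaving the error in the form $(P_L\circ K\circ Q\circ \Tr_L - I)\circ P_1$, without splitting off $P_L\Tr_L$.

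Second, your endgame---showing $KQ-I$ gains an order on the range of $\Tr_L P_1$ and invoking Rellich on $B$---requires knowing that the space $\calC_L \subset H^s(B,\calE)$ is microlocally characterized by the fibrewise condition $\calC_{B(L)}(\wy,\he)$, which is exactly the technical obstacle you flag but do not resolve. The paper's route is cleaner and avoids this: treat the full error $R = (P_L\circ K\circ Q\circ \Tr_L - I)\circ P_1$ as a residual edge operator on $M$ via the composition theorems of \S\ref{triple}, compute its normal operator explicitly using the representable-composition formul\ae\ of \S\ref{representable}, and observe that at the Bessel level $P_{01}\circ P_0\circ \sigma(K)\circ\sigma(Q)\circ \Tr_{B(L)}\circ P_{01} = P_{01}$ (since $\sigma(K)$ left-inverts $\sigma(Q)$ on $\calC_{B(L)}$ and Lemma~\ref{bessel-P-T} gives $P_{01}P_0\Tr_{B(L)} = I$ there). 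Thus $N(R)=0$, so $R$ gains a power of the front-face defining function, and compactness follows from \cite[Prop.~3.29]{M} rather than from Rellich on $B$.
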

\begin{proof}
As before, $\calG$ is a bounded operator and we compute for any $u\in \calH$, 
\begin{align*}
\mathcal{G}(L,Q)u&= GLu + P_L[K(Q\, \textup{Tr}_{L}\, u-Q\textup{Tr}_{L}\, GLu)]\\
&=GLu + (P_L\circ K \circ Q \circ \textup{Tr}_{L})(u-GLu)\\
&=u + (P_L\circ K \circ Q \circ \textup{Tr}_{L}-I) P_1 u,
\end{align*}
where $P_1$ is the orthogonal projection onto the nullspace of $L$ in $x^{\ld}L^2$. Hence we must show that
$(P_L\circ K \circ Q \circ \textup{Tr}_{L}-I)\circ P_1$ is compact on $\calH$.   

By the form of $||\cdot ||_{\calH}$ and since $LP_L=0$ and $LP_1=0$, we need only check compactness of 
\[
(P_L\circ K \circ Q \circ \textup{Tr}_{L}-I)\circ P_1: \calH \longrightarrow \ker L \cap \calH. 
\]
By the composition results in \S \ref{triple}, $(P_L\circ K \circ Q \circ \textup{Tr}_{L})$ is an edge
operator of order $-\infty$, i.e.\ has no diagonal singularity, and has normal operator
\begin{multline*}
N(P_L\circ K \circ Q \circ \textup{Tr}_{L}) = \\ \int e^{iY\eta} 
(P_{01} \circ P_0 \circ \sigma(K) \circ \sigma(Q) \circ \textup{Tr}_{B(L)})
(s|\eta|, |\eta|, z, \wz; \wy, \he) \, \dbar \eta,
\end{multline*}
whence 
\begin{multline*}
N((P_L\circ K \circ Q \circ \textup{Tr}_{L}-I)\circ P_1) =  \\
\int e^{iY\eta} (P_{01} \circ P_0 \circ \sigma(K) \circ \sigma(Q) \circ \textup{Tr}_{B(L)} \circ P_{01} 
-P_{01})(s|\eta|, |\eta|, z, \wz; \wy, \widehat{\eta})\, |\eta| \, \dbar \eta.
\end{multline*}
In this combination, $\sigma(Q)(\wy, \he)$ acts on $\calC_{B(L)}$, so that 
$\sigma(Q)(\wy, \he)$ and $\sigma(Q)^{-1}(\wy, \he)$ cancel. After further obvious cancellations, 
this normal operator reduces to
\[
\int e^{iY\eta} (P_{01}-P_{01})(s|\eta|, |\eta|, z, \wz; \wy, \widehat{\eta})
\, |\eta| \, \dbar \eta=0.
\]
Finally, using the boundedness properties of $P_1$, $P_L$ and $\Tr_L$, we see that
\[
R:=(P_L\circ K \circ Q \circ \textup{Tr}_{L}-I)\circ P_1: 
\calH\longrightarrow \ker L \cap \, x^{\ld} H^\infty_e(M) \hookrightarrow x^{\ld}L^2
\]
is bounded as well. From the composition results in \S \ref{triple}, 
$R\in \Psi^{-\infty, 0, E_{\lf}, E_{\rf}}(M^2_e)$ and $N(R)=0$, so in fact $R\in \Psi^{-\infty, 1, E_{\lf}, E_{\rf}}(M^2_e)$,
with index sets 
\begin{equation}
\begin{split}
E_{\lf} &=\{(\zeta,p)\in \textup{Spec}_b(L) \mid \Im \zeta) >\ld -1/2\}, \\
E_{\rf} &= \{(\zeta,p) \in \C\times \N_0 \mid (\zeta +2\ld, p)\in E_{\lf}\},
\end{split}
\end{equation} 
see \eqref{P-index}. Its compactness is now a consequence of \cite[Prop. 3.29]{M}.
\end{proof}

From Theorems \ref{right} and \ref{left} we now conclude the 
\begin{cor}
Let $(L,Q)$ be elliptic. Then 
\[
(L,Q): \calH_{\ld,\ud} \to x^{\ud}L^2(M)\oplus \left(\bigoplus\limits_{k=1}^MH^{\ld-d_k-1/2}(B,E_k)\right),
\]
is Fredholm, with parametrix
\begin{align*}
\mathcal{G}(f,\phi)= Gf + P_L[K(\phi - Q(\textup{Tr}_{L}Gf))].
\end{align*}
\end{cor}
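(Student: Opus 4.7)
The plan is to deduce the corollary directly by combining Theorems \ref{right} and \ref{left}. Ellipticity, as defined in part (iii) of Definition \ref{typesofbcs}, is precisely the conjunction of the right-ellipticity (surjectivity of $\sigma(Q)\restriction\calC_{B(L)}$) and left-ellipticity (injectivity of $\sigma(Q)\restriction\calC_{B(L)}$) conditions from parts (i) and (ii), since a linear map between finite-rank vector spaces is an isomorphism if and only if it is both surjective and injective. Hence both previous theorems apply.

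Next, I would check that the parametrix $\calG$ given by the same formula works simultaneously as both a left and a right parametrix. The construction uses a pseudodifferential operator $K$ with principal symbol
\[
\sigma(K)(\wy,\eta)=\eta(L)\circ \sigma(Q)^{-1}(\wy,\he)\circ\eta(Q)^{-1},
\]
where in the right-elliptic case $\sigma(Q)^{-1}(\wy,\he)$ denotes a right-inverse to $\sigma(Q)(\wy,\he)\restriction\calC_{B(L)}$, and in the left-elliptic case a left-inverse. Under ellipticity, $\sigma(Q)\restriction\calC_{B(L)}$ is an actual isomorphism onto $E_{\wy}$, so it has a unique two-sided inverse; choosing this two-sided inverse (which varies smoothly in $(\wy,\he)$ because isomorphism is an open condition and one may patch using a partition of unity on $S^*B$), one produces a single operator $K$, and hence a single $\calG$, that simultaneously meets the hypotheses of Theorems \ref{right} and \ref{left}.

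With this $\calG$ in hand, Theorem \ref{right} gives that $(L,Q)\calG - I$ is compact on $x^{\ud}L^2(M)\oplus\bigoplus_k H^{\ld-d_k-1/2}(B,E_k)$, while Theorem \ref{left} gives that $\calG(L,Q)-I$ is compact on $\calH$. By standard functional analysis, the existence of a two-sided parametrix modulo compact operators implies that $(L,Q)$ is Fredholm: its kernel is finite dimensional (from the left parametrix), its range is closed with finite-dimensional cokernel (from the right parametrix), and $\calG$ is a genuine parametrix in the usual sense.

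There is essentially no obstacle here beyond this bookkeeping; the only point that warrants attention is the smooth selection of the two-sided inverse of $\sigma(Q)\restriction\calC_{B(L)}$, which is automatic under the ellipticity hypothesis. All the hard analytic work --- the composition formul\ae{} of \S \ref{triple}, the representability arguments of \S \ref{representable}, and the mapping properties of $P_L$ and $\Tr_L$ established in \S \ref{trace-pot} --- has already been invoked in the proofs of the two preceding theorems, so the corollary is immediate.
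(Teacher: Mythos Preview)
Your proposal is correct and follows exactly the approach the paper intends: the corollary is stated immediately after Theorems \ref{right} and \ref{left} with no separate proof, the paper simply writing ``From Theorems \ref{right} and \ref{left} we now conclude the\ldots''. Your elaboration---that ellipticity is the conjunction of left- and right-ellipticity, that the two-sided inverse of $\sigma(Q)\restriction\calC_{B(L)}$ furnishes a single $K$ serving both constructions, and that the resulting $\calG$ is then a two-sided parametrix modulo compacts---is precisely the bookkeeping the paper leaves implicit.
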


We conclude by presenting one simple application of this machinery. 
\begin{prop}
Let $u\in x^{\ld} L^2(M)$ and suppose that $Lu=0$ and $\Tr_L u = 0$. Then $u \in x^{\ud}H^\infty_e(M)$.
\end{prop}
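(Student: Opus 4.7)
The strategy combines elliptic regularity, the polyhomogeneous expansion of $u$, and the identification of $\Tr_L u$ with the free asymptotic data in the critical strip.

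First I would apply elliptic regularity in the edge calculus. The generalized inverse $G$ of $L$ on $x^{\ld}L^2$ satisfies $GL = \mathrm{Id} - P_1$, where $P_1$ is the residual edge projector onto $\ker L \cap x^{\ld}L^2$ whose Schwartz kernel is polyhomogeneous with the index family in \eqref{P-index}. Since $Lu = 0$, the identity $u = P_1 u$ upgrades $u$ to an element of $x^{\ld}H^\infty_e(M)$ which is polyhomogeneous at $\del M$, so in particular $u \in \calH_{\ld,\ud}$. Proposition~\ref{explsoln}, lifted from the Bessel level to $M$ as described in the introduction, then furnishes an expansion
\begin{equation*}
u = \sum_{\zeta_j \in \mathfrak{S}(L)} \sum_{p,\ell \ge 0} u_{j,p,\ell}(y,z)\, x^{-i\zeta_j + \ell}(\log x)^p + \tilde u,
\qquad \tilde u \in x^{\ud}H^\infty_e(M),
\end{equation*}
a finite sum restricted by $\Im\zeta_j + \ell \in (\ld,\ud)$.

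Next, by the construction of $\Tr_L$ in Section~\ref{trace-pot} via pairing against the dual basis $\{\Phi^*_j\}$ of the trace bundle $\bigoplus_j \calE(L;\zeta_j)$, the trace map extracts precisely the $\ell=0$ coefficients $\{u_{j,p,0}(y,z)\}_{j,p}$ as smooth sections over $B$. The hypothesis $\Tr_L u = 0$ therefore forces $u_{j,p,0} \equiv 0$ for every $j$ and every admissible $p$.

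Finally, I would propagate this vanishing to all $\ell > 0$. Substituting the expansion into $Lu = 0$ and matching like powers of $x^{-i\zeta_j+\ell}(\log x)^p$ yields a recurrence of the form $I_y(L)(-i(\zeta_j+\ell))\, U_{j,\ell}(y,z) = \Phi_{j,\ell}(y,z)$, where $U_{j,\ell}$ collects the log-coefficients at shift $\ell$ and $\Phi_{j,\ell}$ depends polynomially on the lower-$\ell$ data. At non-indicial shifts $\zeta_j+\ell \notin \mathrm{Spec}_b(L)$, the indicial family is invertible and the vanishing propagates; at indicial shifts, $\zeta_j+\ell$ coincides with some $\zeta_{j'} \in \mathfrak{S}(L)$ and its leading data is already annihilated by the corresponding summand of $\Tr_L u$. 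An induction on $\ell$ thus kills the entire expansion, giving $u = \tilde u \in x^{\ud}H^\infty_e(M)$, as claimed. The main obstacle is precisely this last inductive step: in the overlap case $\zeta_j+\ell = \zeta_{j'}$ one must track both log-power and Jordan-chain data, which is exactly what the root-function description of $\calE(L;\zeta_{j'})$ of fibre dimension $m(\zeta_{j'})$ from Section~\ref{fundamentals} was set up to accommodate.
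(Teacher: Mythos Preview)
Your approach differs substantially from the paper's, and the key step does not close.

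The paper never touches the expansion of $u$. It picks any left-elliptic boundary condition $Q$ (the identity on $\calE$ suffices), invokes the left parametrix $\calG$ from Theorem~\ref{left} so that $\calG \circ (L,Q) = \textup{Id} - \calR$ with $N(\calR) = 0$, and observes that $Lu = 0$ together with $\Tr_L u = 0$ give $(L,Q)u = 0$, whence $u = \calR u$. Since the normal operator of $\calR$ vanishes, \cite[Thm.~3.25]{M} provides $\calR: x^\delta H^s_e \to x^{\delta + \epsilon}H^\infty_e$ for a fixed $\epsilon > 0$, and iteration reaches $x^{\ud}H^\infty_e$. No induction over indicial roots is needed.

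The gap in your argument is in the third paragraph. You assert that $\Tr_L u = 0$ forces $u_{j,p,0} \equiv 0$ for every $j$ and $p$, but this is not justified, and for the $\Tr_L$ actually constructed in \S\ref{trace-bounded} it is not clear it is true. Only the \emph{normal operator} $N(\Tr_L) = \Tr_{N(L)}$ extracts the leading coefficients exactly; the full operator $\Tr_L$ is an extension off the front face, explicitly noted there to be ``by no means unique,'' with an arbitrary smooth interior contribution. Applied to a polyhomogeneous $u$, it returns the leading data \emph{plus} uncontrolled corrections from the $\ell \geq 1$ terms and from the interior of $M$, so $\Tr_L u = 0$ says only that the leading data equals minus an unidentified correction. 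You cannot rescue this by a regularity bootstrap, since $u = P_1 u$ already forces all coefficients to be smooth on $B$. Your acknowledged ``main obstacle'' in the overlap case is a second, separate incompleteness, but the argument stalls before reaching it. The parametrix machinery of Theorem~\ref{left} is built precisely to package all these correction terms into a single residual $\calR$ whose vanishing normal operator does the work uniformly, which is why the paper's route is both shorter and complete.
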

\begin{proof}
Choose any left elliptic boundary value problem $(L,Q)$, and let $\calG$ be its left parametrix, as constructed above, so
that $\calG \circ (L,Q) = \mbox{Id} - \calR$. Then $\Tr_L u = 0$, so $u = \calR u$. Since $N(\calR)=0$, \cite[Thm. 3.25]{M} gives 
that 
\[
\calR: x^{\delta}H^s_e(M)\to x^{\delta+\epsilon}H^{\infty}_e(M), \ s\geq 0
\]
is bounded for some $\epsilon >0$ which depends only on $\calR$. Iterating this statement gives the result. 
\end{proof}

\bibliographystyle{amsplain}

\end{document}